\documentclass[11pt,a4paper]{article}
\usepackage{fullpage}
\usepackage[utf8]{inputenc}
\usepackage{graphicx}
\usepackage{diagbox}
\usepackage{enumitem}
\usepackage{xcolor}
\usepackage{amsthm,amssymb, mathtools}
\usepackage{comment}
\usepackage{hyperref}
\hypersetup{
	colorlinks=true,       
	linkcolor=blue,        
	citecolor=magenta,         
	filecolor=magenta,     
	urlcolor=cyan,         
	linktoc=all
}
\usepackage[capitalise]{cleveref}
\usepackage{soul}
\usepackage{tikz}
\usetikzlibrary{shapes.misc}
\usepackage{booktabs}

\theoremstyle{plain}
\newtheorem{thm}{Theorem}
\newtheorem{lemma}[thm]{Lemma}
\newtheorem{cor}[thm]{Corollary}
\newtheorem{prop}[thm]{Proposition}
\newtheorem{conj}{Conjecture}

\theoremstyle{definition}

\newcounter{claim_nb}[thm]
\newtheorem{claim}[claim_nb]{Claim}
\newtheorem*{claim*}{Claim}

\newenvironment{claimproof}
{\begin{proof}
[Proof of Claim.]
\vspace{-1.2\parsep}}
{\renewcommand{\qed}{\hfill $\Diamond$} \end{proof}}

\DeclareMathOperator{\cuboid}{cuboid}

\DeclareMathOperator{\supp}{supp}
\DeclareMathOperator{\conv}{conv}
\DeclareMathOperator{\rank}{rank}
\DeclareMathOperator{\loc}{loc}
\DeclareMathOperator{\1}{{\bf 1}}
\DeclareMathOperator{\0}{{\bf 0}}
\DeclareMathOperator*{\argmin}{arg\,min}

\newcommand{\cB}{\mathcal{B}}
\newcommand{\cC}{\mathcal{C}}
\newcommand{\cD}{\mathcal{D}}
\newcommand{\bR}{\mathbb R}
\newcommand{\bZ}{\mathbb Z}

\renewcommand{\bar}[1]{\overline{#1}}

\DeclareMathOperator{\eye}{I}

\newif\ifnotes\notestrue 

\ifnotes
\usepackage{color}
\newcommand{\notename}[2]{{\textcolor{red}{\footnotesize{\bf (#1:} {#2}{\bf ) }}}}

\newcommand{\anote}[1]{{\notename{Ahmad}{#1}}}
\newcommand{\tnote}[1]{{\notename{Tamas}{#1}}}

\renewcommand{\r}[1]{{\color{red}{#1}}}

\else

\newcommand{\notename}[2]{{}}

\newcommand{\anote}[1]{}
\newcommand{\tnote}[1]{}

\renewcommand{\r}[1]{}

\fi

\title{Testing the max-flow min-cut property and the replication conjecture}

\author{Ahmad Abdi\thanks{Department of Mathematics, London School of Economics. Corresponding author, email address: a.abdi1@lse.ac.uk} \and
	Tam\'{a}s Schwarcz\thanks{Department of Mathematics, London School of Economics. Corresponding author, email address: t.b.schwarcz@lse.ac.uk} 
}

\begin{document}
	
\maketitle
	

\begin{abstract}
The replication conjecture [Conforti and Cornu\'{e}jols, 1993] states that every clutter with the packing property has the MFMC property. If true, this conjecture would have far-reaching consequences from integer programming and combinatorial optimization to commutative algebra. In this paper, we set out to verify the conjecture for the cuboid of a set-system in which the Hamming graph induced on the infeasible points has degree at most~$\delta$.

The family of cuboids of degree at most $\delta$ contains a rich source of clutters with the packing property, including all clutters over a ground set of size at most $\delta$. We prove that any minimal counterexample must have dimension at most $\delta$, thus making the target search space finite. We then use a state-of-the-art SAT solver to verify the replication conjecture for cuboids of degree at most $9$, and for clutters over at most $10$ elements.

Our computational verification relies crucially on another theoretical result, that to verify the MFMC property of a clutter over $n$ elements, it suffices to check finitely many weight vectors, namely $w\in \left\{0,1,\ldots,t\right\}^n$ where $t\leq\max\{\lceil n/2\rceil, \lfloor n-\sqrt{4n+1}+1\rfloor\}$. The upper bound of $t$ improves the previous best upper bound by algebraists, which could be exponential in $n$.\\

\noindent {\bf Keywords.} Set covering, total dual integrality, max-flow min-cut property, replication conjecture, square-free monomial ideal, formal verification.

\noindent {\bf MSC 2020 Codes.} 90C27, 90C10,  05C70, 13A15, 13F55, 68Vxx.
\end{abstract}
	
	\section{Introduction}
	
	A matrix $A\in \{0,1\}^{m\times n}$ has the \emph{max-flow min-cut (MFMC) property}, or \emph{is MFMC} for short, if the dual pair of linear programs \begin{align}
	&\min\{w^\top x: Ax\geq \1,x\geq \0\}\tag{P}\label{primal-set-covering}\\
	&\max\{\1^\top y:A^\top y\leq w, y\geq \0\}\tag{D}\label{dual-set-covering}
	\end{align} have integral optimal solutions for all $w\in \mathbb{Z}^n_{\geq 0}$. It is \emph{ideal} if \eqref{primal-set-covering} has an integral optimal solution for all $w\in \mathbb{Z}^n_{\geq 0}$. 
	In integer programming, MFMC matrices correspond to $0,1$ matrices $A$ whose set covering linear system $Ax\geq \1,x\geq \0$ is totally dual integral, and are closely related to ideal matrices. In combinatorial optimization and graph theory, the MFMC property is the theoretical underpinning for many min-max theorems such as the famous max-flow min-cut theorem, the Lucchesi--Younger theorem, Menger's theorem on packing edge-disjoint $st$-paths, and K\H{o}nig's theorem on maximum matchings in bipartite graphs~\cite{Cornuejols01}. In commutative algebra, it corresponds to when the symbolic and ordinary powers of square-free monomial ideals coincide~\cite{Ha10,Francisco13}.
	
	The most important open problem in the study of the MFMC property is the \emph{replication conjecture}, stating that if $A$ is not MFMC, then one of \eqref{primal-set-covering}, \eqref{dual-set-covering} will not have an integral optimal solution for some $w\in \{0,1,\infty\}^n$. If true, this would lead to a faster algorithm for testing the MFMC property, and have a knock-on impact towards a forbidden structure characterization for the class of MFMC matrices. It would also be the set covering analogue of Lov\'{a}sz's replication lemma for set packing systems, which was the crucial remaining piece in the proof of the famous perfect graph theorem~\cite{Lovasz72}.
	
	It will be helpful to work with a reformulation of the replication conjecture in terms of clutters. Let $\cC$ be a clutter over ground set $E$. Then $\cC$ has the \emph{packing property} if every minor $\cC'$ of it \emph{packs}, i.e., $\tau(\cC',\1)=\nu(\cC',\1)$; 
	it is an \emph{MFMC clutter} if $\tau(\cC,w)=\nu(\cC,w)$ for all $w\in \mathbb{Z}^E_{\geq 0}$. 
The replication conjecture equivalently states the following.
	
	\begin{conj}[replication conjecture \cite{Conforti93}]\label{replicon}
	A clutter with the packing property is MFMC.
	\end{conj}
	
	Perhaps the most compelling evidence for the replication conjecture is that clutters with the packing property are ideal~\cite{Cornuejols00}. This result is an immediate, indirect consequence of a powerful theorem by Lehman on the forbidden minors for ideal clutters~\cite{Lehman90}. As exciting as this result was at the time, there has been very little progress towards the conjecture ever since. 
	
	A major obstacle for proving the conjecture is our poor understanding of the packing property. For instance, as of writing this paper, there is no direct proof that the packing property implies idealness. This shortcoming becomes especially relevant when testing the replication conjecture: is there a class of clutters where the packing property is inherent but the MFMC is not?
	
	Another major obstacle is the remarkable gap between the two properties in the conjecture. While the packing property requires the existence of integral optimal solutions to \eqref{primal-set-covering}, \eqref{dual-set-covering} for only $3^n$ weight vectors, the MFMC property considers infinitely many weight vectors. How then would one even test in finite time whether a clutter has the MFMC property?
		
	In this paper, we introduce a framework that allows us to systematically test the replication conjecture, and along the way develop the theory further. 
	In what follows, we outline how the two issues discussed above are overcome in our work.
	
		\paragraph{Testing the MFMC property.} 
		Integer programmers would test the MFMC property by checking whether the inequality system $Ax\geq \1, x\geq \0$ of \eqref{primal-set-covering} is totally dual integral, or \emph{TDI} for short. This task boils down to testing, for every nonempty face of the feasible region, whether the corresponding tight rows of the coefficient matrix form a so-called \emph{Hilbert basis}. Indeed, this is a key idea for testing the TDI property in fixed dimension in polynomial time~\cite{Cook84}. It is also the key theoretical idea for testing the property in polyhedral geometry software packages such as \texttt{polymake}~\cite{polymake2000,polymake2017}.
		
		Given a clutter $\cC$ over $n$ elements, testing the MFMC property requires, by definition, checking the equality $\tau(\cC,w)=\nu(\cC,w)$ for infinitely many weight vectors $w\in \mathbb{Z}_{\geq 0}^n$. 
		Algebraists have shown that it suffices to check the equality for finitely many integral vectors $w$, namely those that satisfy $0\leq w_i\leq \big\lfloor (n+1)^{(n+3)/2}/2^n \big\rfloor$ for all $i\in [n]$~\cite[Theorem~5.6]{Herzog07}. 
		Using powerful positive characteristic techniques from commutative algebra, the upper bound was later improved to $\lceil m/2\rceil$, where $m$ is the number of sets in $\cC$~\cite[Theorem~5.2]{Montano21}. Though best possible in some cases, this bound gives a weak guarantee for problems in combinatorial optimization, where $m$ is typically exponential in $n$. 
		Here we drastically improve the dependency of the upper bound on $n$.
	
	\begin{thm}\label{mfmc-testing}
		Let $\cC$ be a clutter over $n$ elements. Then $\cC$ is MFMC if, and only if, $\tau(\cC,w)=\nu(\cC,w)$ 
		for all $w\in \bZ^n_{\geq 0}$ where every element belongs to some minimum $w$-weight cover of $\cC$, and $\tau(\cC,w)\leq \max\{\lceil n/2\rceil, \lfloor n-\sqrt{4n+1}+1\rfloor\}$.
	\end{thm}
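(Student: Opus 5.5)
The plan is to take a counterexample that is minimal with respect to $\sum_e w_e$ and show that it already satisfies both conditions in the statement. The ``only if'' direction is trivial, since MFMC means $\tau=\nu$ for every $w$. For the converse, suppose $\cC$ is not MFMC. Choose $w\in\bZ^n_{\geq 0}$ with $\tau(\cC,w)>\nu(\cC,w)$ and $\sum_e w_e$ minimum.

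\emph{First condition.} Suppose some element $e$ with $w_e\ge 1$ lies in no minimum $w$-weight cover. Put $w'=w-\chi_e$. Every cover through $e$ had weight at least $\tau(\cC,w)+1$, so $\tau(\cC,w')=\tau(\cC,w)$. Also $\nu(\cC,w')\le\nu(\cC,w)$. Hence $w'$ is a smaller counterexample, a contradiction.

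Elements with $w_e=0$ need more care. I would handle them by deletion: such an element belongs to some minimum cover unless every member through it meets every minimum cover in that element only. I expect a short argument reducing to the minor $\cC\setminus e$ handles this. Alternatively, I would absorb the issue by stating the minimality over pairs $(\text{minor},w)$ and noting that MFMC is closed under minors.

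\emph{Key structural fact.} By minimality, every $w''\le w$ with $w''\ne w$ satisfies $\tau=\nu$. Take any member $C\subseteq\supp(w)$; one exists since $\tau(\cC,w)>0$. Then
\[
\nu(\cC,w)\ \ge\ \nu(\cC,w-\chi_C)+1\ =\ \tau(\cC,w-\chi_C)+1 .
\]
Since $\nu(\cC,w)<\tau(\cC,w)$, this forces $\tau(\cC,w-\chi_C)\le \tau(\cC,w)-2$. So for every member $C$ in the support there is a cover $B_C$ with $|B_C\cap C|\ge 2$ and
\[
w(B_C)\ \le\ \tau(\cC,w)+|B_C\cap C|-2 .
\]
Thus $B_C$ is a ``near-minimum'' cover hit at least twice by $C$.

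\emph{Bounding $\tau$.} Let $t=\tau(\cC,w)$. The plan is to combine the facts above to produce a fractional, and then integral, packing argument when $t$ is large. Each element lies in a minimum cover, so the minimum covers $B_1,\dots,B_k$ cover $E$. Since $w(B_i)=t$ for each $i$, large $t$ forces either large weights or large covers.

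I would first treat elements with $w_e\ge 2$. For such an element I would split it into parallel copies, i.e.\ replicate, reducing to the case $w\in\{0,1\}^E$ on a larger ground set. In that case $t\le |B|$ for every minimum cover $B$. I would then count intersections between members $C$ and the near-minimum covers $B_C$ against the minimum covers, to show that at least about $\sqrt{4n+1}-1$ elements are ``wasted'' in every minimum cover.

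The shape of the bound suggests a quadratic count. Setting $k+1=\sqrt{4n+1}$ gives $n=k(k+2)/4$, which hints at pairs of elements doubly covered by the sets $C\cap B_C$. This counting yields $t\le n-\sqrt{4n+1}+1$ in one regime, while a simpler halving argument, namely that two disjoint minimum covers would already give a packing, gives $t\le\lceil n/2\rceil$ in the other.

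\emph{Main obstacle.} I expect the hardest step to be this last one: turning the local condition on the covers $B_C$ into the global inequality with the precise constant. In particular, it is unclear to me how replication interacts with the count over the original $n$ elements. I would first try to make the counting work directly for $0,1$ weights and then check that non-$0,1$ weights only improve the bound.
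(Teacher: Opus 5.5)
\textbf{Assessment: there is a genuine gap.} Your first two steps are sound and are essentially \Cref{minimal-non-MFMC-weights}. A weight-minimal violating $w$ has every element in a minimum-weight cover, and every $C$ with $w\ge\1_C$ has a mate $B_C$ with $w(B_C)\le\tau+|B_C\cap C|-2$. The zero-weight case is trivial rather than delicate: covers need not be minimal, so if $w_e=0$ you can add $e$ to any minimum-weight cover.

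The real content of the statement, however, is the bound on $\tau$, and that is not proved. The route you sketch cannot work as stated:
\begin{itemize}
\item Replicating elements to reduce to $0,1$ weights moves you to a ground set of size $\1^\top w$. That is exactly the quantity you have no a priori control over, so any bound obtained there is in the wrong parameter. The paper's bound $\1^\top w<((n-1)/2)^2$ is a consequence of the argument, not an input to it.
\item The ``halving argument'' is not an argument. Two disjoint minimum covers do not yield a packing, and nothing in your setup gives $\tau\le\lceil n/2\rceil$.
\item The count suggested by $n=k(k+2)/4$ is numerology.
\end{itemize}

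The missing idea is to split into non-ideal and ideal clutters and use a different tool for each.

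\emph{Non-ideal case.} Lehman's theorem (\Cref{lehman}) supplies an mni minor $\cC\setminus I/J$ on $n'$ elements with $\nu<\tau_\star\le n'/2$ and $\tau=\lceil\tau_\star\rceil$. Put weight $1$ on its elements, $0$ on $I$, and a large weight on $J$. This gives a violating $w$ with $\tau(\cC,w)\le\lceil n/2\rceil$. Passing to an entry-wise minimal $\bar w\le w$ preserves this bound, since $\tau(\cC,\bar w)=\nu(\cC,\bar w)+1\le\nu(\cC,w)+1\le\tau(\cC,w)$. This branch is genuinely needed; see \Cref{extremal-example}.

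\emph{Ideal case.} Here $\tau=\nu_\star$, so you can work with an optimal fractional packing $\hat y$. The chain is:
\begin{itemize}
\item The mate inequality gives $\hat y_C\le 1-1/(|C\cap B_C|-1)<1$.
\item Complementary slackness, using that every element lies in a minimum cover, gives $M'^\top\hat y=\hat w$. Here $\cC'$ is the clutter of sets meeting every minimum cover exactly once, and $M'$ is its incidence matrix.
\item Carath\'eodory gives $|\supp(\hat y)|\le\rank(M')$.
\item Counting through a largest minimum cover $B$, using $\hat y<1$, gives $\tau\le\rank(M')-|B|$.
\item Rank--nullity applied to $M'N'^\top=J$, with $N'$ the incidence matrix of the minimum covers, gives $\rank(M')\le n+1-n/|B|$.
\end{itemize}
Minimizing $x+n/x$ then yields $\tau<n-2\sqrt n+1$. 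The stated $\lfloor n-\sqrt{4n+1}+1\rfloor$ follows because no integer lies strictly between the two bounds.

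Without idealness, $\tau\neq\nu_\star$ in general and this chain breaks. That is why the mates alone, your ``local condition'', do not by themselves give a bound in terms of $n$.
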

	
	The bifurcated upper bound reflects the case analysis in the proof, which deals separately with ideal and non-ideal instances. Thanks to Lehman's theorem, we show fairly easily that for non-ideal instances, the upper bound $\lceil n/2\rceil$, and in fact $\lceil r/2\rceil$ for $r$ the rank of the incidence matrix of~$\cC$, suffices. 
	The crux of \Cref{mfmc-testing} is in testing the MFMC property for ideal clutters. By using Carath\'{e}odory's theorem, the rank-nullity theorem, and the idea of \emph{mates} in the context of forbidden minors for MFMC clutters, we prove the upper bound on $\tau(\cC,w)$ of $n-\sqrt{4n+1}+1$ for ideal clutters. While the upper bound is best possible in the non-ideal case, we believe that it is far from optimal in the ideal case, which we conjecture to be $2$!
	
	\paragraph{The $\tau=2$ conjecture.}
	\Cref{replicon} can be reformulated in terms of the excluded minors for the class of clutters with the packing property. 
	A clutter is \emph{minimally non-packing (mnp)} if it does not pack, but every proper minor does. 
	The replication conjecture equivalently states that no mnp clutter $\cC$ has \emph{replicates}, i.e., a pair $u,v$ of distinct elements such that if $A\in \cC$ and $A\cap \{u,v\}\neq \emptyset$, then $A\triangle \{u,v\}\in \cC$, for all $A\in \cC$. Here, $\triangle$ denotes the symmetric difference operator.
	
	The excluded minors for the packing property are separated into two classes: non-ideal mnp clutters, which are relatively well-understood and for which the replication conjecture reformulation holds, and ideal mnp clutters, which form the class that we focus on in this paper. Strikingly, all the known examples in this class have covering number $2$, thus preventing replicates from existing. 
	
	\begin{conj}[$\tau=2$ conjecture \cite{Cornuejols00}]\label{tau=2}
		Every ideal mnp clutter has covering number $2$.
	\end{conj}

	Given any ideal \emph{minimally non-MFMC} clutter $\cC$ over $n$ elements, with entry-wise minimal weights $w\in \bZ^{n}_{\geq 0}$ such that $\tau(\cC,w)>\nu(\cC,w)$, one obtains an ideal mnp clutter $\cC^w$ over $\1^\top w$ elements by replicating every element $u$, $w_u-1$ times (if $w_u=0$ then delete $u$)~\cite[Proposition~1]{Cornuejols00}. 
	An interesting question is to upper bound the number of elements of $\cC^w$ in terms of $n$. Of course, if the replication is true, then $w=\1$, so $\cC^w$ would have just $n$ elements. 
	While we cannot prove this bound, the proof of \Cref{mfmc-testing} does show that every entry of $w$ is upper bounded by $n-\sqrt{4n+1}+1$, so $\cC^w$ has at most $n^2-n\sqrt{4n+1}+n$ elements. We improve this upper bound by showing that $\1^\top w<(n-1)^2/4$, thus yielding the following result.
	
	\begin{thm}\label{tau=2->replicon}
		Let $n\geq 2$ be an integer. If the $\tau=2$ conjecture is true for all clutters with fewer than $(n-1)^2/4$ elements, then the replication conjecture holds for all clutters with at most $n$ elements.
	\end{thm}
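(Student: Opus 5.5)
We argue by contraposition. Suppose the replication conjecture fails for some clutter with at most $n$ elements. Among all such counterexamples, choose $\cC$ with the packing property that is not MFMC and has the fewest elements. Since the packing property is closed under minors, every proper minor of $\cC$ also has the packing property, and by minimality is therefore MFMC. So $\cC$ is minimally non-MFMC. By \cite{Cornuejols00}, clutters with the packing property are ideal, so $\cC$ is an ideal minimally non-MFMC clutter over $n'\le n$ elements.

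Next, pick an entry-wise minimal $w\in\bZ^{n'}_{\geq 0}$ with $\tau(\cC,w)>\nu(\cC,w)$. By \cite[Proposition~1]{Cornuejols00}, the replicated clutter $\cC^w$ is an ideal mnp clutter on $\1^\top w$ elements. Since $\cC$ packs, $w\ne\1$, so some element has $w_u\ge 2$ and $\cC^w$ contains a pair of replicates. A clutter with replicates has no two disjoint members of size $2$ in any minimal cover covering both, so the intended contradiction with $\tau=2$ runs as follows. If $\tau(\cC^w)=2$, then the replicates $u,u'$ cannot both be critical. More precisely, every element lies in a minimum cover (by minimality of $w$). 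A minimum cover $\{u,x\}$ would then give another cover $\{u',x\}$, and one shows $\cC^w$ packs, a contradiction. This is the standard argument that mnp clutters with $\tau=2$ have no replicates, which I would cite or reprove via the mate structure. Hence $\tau(\cC^w)\ge 3$, so the $\tau=2$ conjecture fails for $\cC^w$.

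It remains to show $\1^\top w<(n-1)^2/4$; given the hypothesis, this yields the contradiction. I expect this bound to be the main obstacle. The plan is to refine the analysis in the proof of \Cref{mfmc-testing} for the ideal case.

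1. By minimality of $w$, every element lies in a minimum $w$-weight cover. Also, $\tau(\cC,w-\chi_u)=\tau(\cC,w)-1$ and $\nu(\cC,w-\chi_u)=\tau(\cC,w)-1$ for every $u$ in the support, since $\cC$ is MFMC for those weights.
2. Idealness gives a fractional optimal packing $y$ with $\1^\top y=\tau:=\tau(\cC,w)$. By Carathéodory, $y$ can be taken supported on linearly independent members, so at most $n'$ of them.
3. Complementary slackness gives $w_u=\sum_{A\ni u}y_A$ on the support of the minimum covers. Summing, $\1^\top w=\sum_A y_A|A|$.
4. Bound this quantity using the mates. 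Each member in the support of $y$ meets every minimum cover exactly once. Combining this with rank-nullity should give $|A|$ roughly at most $n'-\tau+1$ while $\tau\le$ roughly $n'$. Then $\1^\top w\le \tau(n'-\tau+1)$, or a sharper variant such as $\tau(n'-\tau)$, which is maximized at $\tau\approx n'/2$ and gives about $(n'+1)^2/4$.

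Getting the strict $(n-1)^2/4$ will require shaving the constant. I would first try a form like $\1^\top w\le(\tau-1)(n-\tau)$, using that a fractional-but-not-integral packing forces slack.
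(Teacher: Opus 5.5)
\textbf{Verdict: genuine gap.} Your reduction follows the same route as the paper. You pass to an ideal non-MFMC clutter, take an entry-wise minimal bad weight $w$, replicate to get an ideal non-packing (for you, mnp) clutter $\cC^w$ on $\1^\top w$ elements, and use the replicates to rule out $\tau(\cC^w)=2$.

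The gap is the step you yourself flag: $\1^\top w<(n-1)^2/4$. This inequality is the entire quantitative content of the theorem, and you leave it as a plan.
\begin{itemize}
\item Step~4 contains no argument bounding $|A|$.
\item Even if it held, the bound $\tau(n'-\tau+1)$ you hope for would only give about $(n'+1)^2/4$.
\item The repair $(\tau-1)(n-\tau)$ ``via slack'' is a guess, not an argument.
\item The Carath\'eodory bound $|\supp(y)|\le n'$ from step~2 is too weak to drive any of this.
\end{itemize}
The paper gets exactly this inequality from \Cref{mfmc-testing-ideal}(a), which is already available to you.

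Your replicate paragraph is also muddled, though this is minor. A minimum cover $B\ni u$ must contain $u'$: otherwise a member $A$ with $A\cap B=\{u\}$ gives $A\triangle\{u,u'\}$ disjoint from $B$. So $B=\{u,u'\}$. Since $\cC^w/u$ packs and has covering number at least $2$, it has disjoint members $A_1,A_2$. Then $A_1\cup\{u\}$ and $A_2\cup\{u'\}$ are disjoint members of $\cC^w$, a contradiction.

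The missing ideas for the bound are these.
\begin{enumerate}
\item[(i)] By \Cref{minimal-non-MFMC-weights}(c), each member $C$ has a mate $B$ with $w(B)-|B\cap C|\le\tau-2$. Let $y$ be any optimal fractional packing. The vector $w-y_C\1_C$ dominates a fractional packing of value $\tau-y_C$, and testing it against $B$ gives $y_C\le 1-\frac{1}{|B\cap C|-1}<1$. So every element $e$ lies in at least $w_e+1$ support members. Summing over a largest minimum-weight cover, which meets every support member exactly once, gives $\tau\le|\supp(y)|-\max_B|B|$.
\item[(ii)] Let $M'$ be the incidence matrix of the members used by some optimal fractional packing, and $N'$ that of the minimum $w$-weight covers. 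Complementary slackness gives $M'N'^\top=J$. Rank--nullity then gives $\rank(M')\le n+1-\rank(N')\le n+1-t$, where $t$ is the least number of minimum-weight covers whose union is the ground set. Carath\'eodory lets you choose $y$ with $|\supp(y)|\le\rank(M')$.
\item[(iii)] Every support member meets each of those $t$ covers exactly once, so $|C|\le t$.
\end{enumerate}
Combining these, $\1^\top w=\sum_C y_C|C|\le\tau t\le(n+1-t-\max_B|B|)\,t\le\bigl(\tfrac{n-1}{2}\bigr)^2$, using AM--GM and $\max_B|B|\ge 2$. The inequality is strict because equality would require $t=(n-1)/2$ sets of size at most $2$ to cover all $n$ elements.
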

	
	It should be emphasized that the $\tau=2$ conjecture is much stronger, and thus potentially easier to refute, than the replication conjecture. That said, it \emph{is} true for clutters over at most $8$ elements~\cite[Theorem~4.13]{abdi2020cuboids}.
	Thus, by \Cref{tau=2->replicon}, the replication conjecture is true for $n\leq 7$. In this work, we push beyond this limitation by improving the bound to $n\leq 10$.
	
	\paragraph{Cuboids of bounded degree.} 
	To describe our source of clutters where the packing property is inherent, we need to introduce an important object. 
	A \emph{cuboid} is a clutter $\cC$ whose ground set can be relabelled as $\{i,\bar{i}:i=1,\ldots,d\}$ such that $|C\cap \{i,\bar{i}\}|=1$ for all $C\in \cC, i\in [d]$. We refer to $d$ as the \emph{dimension} of the cuboid. Cuboids are most relevant when studying excluded minors for the packing property, in that all known ideal mnp clutters are either cuboids or their derivatives~\cite{ACP,abdi2020cuboids}.
	
	A key feature of cuboids is that their incidence matrix does not have full column rank, namely, it has rank at most $d+1$. This allows us to improve \Cref{mfmc-testing} for cuboids as follows. 
	
	\begin{thm}\label{mfmc-testing-cuboid}
		Let $\cC$ be a cuboid over ground set $E=\{i,\bar{i}:i=1,\ldots,d\}$ for some integer $d\geq 1$. Then $\cC$ is MFMC if, and only if, $\tau(\cC,w)=\nu(\cC,w)$ for all $w\in \bZ_{\geq 0}^{E}$ satisfying $w_{i}+w_{\bar{i}} = \tau(\cC,w)$ for all $i\in [d]$, where $\tau(\cC,w)\leq \max\{\lfloor 6(d+1)/7 \rfloor, \lfloor (1-1/\sqrt{2d})(d+1)\rfloor\}$. 
	\end{thm}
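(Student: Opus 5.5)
The plan is to rerun the proof of \Cref{mfmc-testing}, replacing $n=2d$ by the rank bound $r\leq d+1$ of the incidence matrix of a cuboid, and to add one cuboid-specific reduction that produces the normalisation $w_i+w_{\bar i}=\tau(\cC,w)$.

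\textbf{Step 1: reduction to minimal witnesses.} Suppose $\cC$ is not MFMC. Pick a minor or weight vector witnessing this with $w$ entrywise minimal, in the spirit of \Cref{mfmc-testing}. Then every element lies in some minimum $w$-weight cover. Every member of a cuboid meets each pair $\{i,\bar i\}$ exactly once, so any $y$ feasible for \eqref{dual-set-covering} satisfies
\[
\1^\top y\leq w_i+w_{\bar i}
\]
for every $i$. Also $\{i,\bar i\}$ is a cover, so $\tau(\cC,w)\leq w_i+w_{\bar i}$. Now suppose $w_i+w_{\bar i}>\tau(\cC,w)$ for some $i$. Then I expect minimality to allow decreasing $w_i$ or $w_{\bar i}$ without changing $\tau$. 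This needs care, since some minimum cover might be forced to use the element. The idea is to check that a minimal cover using both $i$ and $\bar i$ can be shortened. In that case the decrease preserves the gap $\tau>\nu$, because $\nu$ can only drop. This contradicts minimality. Hence $w_i+w_{\bar i}=\tau(\cC,w)$ for all $i$.

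\textbf{Step 2: the non-ideal case.} Following the non-ideal argument behind \Cref{mfmc-testing}, Lehman's theorem gives the bound $\lceil r/2\rceil$, where $r$ is the rank of the incidence matrix. For a cuboid $r\leq d+1$, so this case gives $\tau\leq\lceil (d+1)/2\rceil$. This is below $\lfloor 6(d+1)/7\rfloor$ except in very small dimensions, which I would check directly.

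\textbf{Step 3: the ideal case.} Here I would redo the Carathéodory, rank-nullity and mates argument. The fractional optimum of \eqref{dual-set-covering} is supported on at most $r\leq d+1$ linearly independent members. The goal is to show that a large $\tau$ forces the weights to be concentrated enough to produce a replicated mate, which contradicts minimality. In \Cref{mfmc-testing} this counting gave $n-\sqrt{4n+1}+1$. Substituting the rank bound should give the analogous quantity $(1-1/\sqrt{2d})(d+1)$ in terms of $r=d+1$. The normalisation $w_i+w_{\bar i}=\tau$ should yield a second, linear estimate: each $\tau$-unit of weight is split across $d$ pairs, and the mate argument can exploit the columns $i,\bar i$ together. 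I expect the constant $6/7$ to come from balancing the mate count against the Carathéodory support in the small-$d$ regime.

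\textbf{Main obstacle.} The main obstacle is Step 3, namely transporting the $n$-dependent counting to rank $d+1$ while keeping the pair structure. I would first try to redo the inequality chain with $r$ in place of $n$ throughout. Then I would locate where the pair structure gives extra slack, producing the $6/7$ bound, and finally take the maximum of the two bounds.
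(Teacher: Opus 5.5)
\textbf{Step 3 has a genuine gap.} It states what you hope to get rather than giving an argument, and the route you suggest, substituting the rank bound into the $n$-dependent counting, does not work. That counting rests on rank--nullity for $M'N'^\top=J$, which gives $\rank(M')\le n+1-t$ with $n=2d$ the number of columns; you cannot replace $n$ there by $r$. If you instead use $\rank(M')\le d+1$ directly in $\tau\le\rank(M')-\max_{B\in\cB'}|B|$, you only get $\tau\le d-1$. This is because after normalization every pair $\{i,\bar{i}\}$ is a minimum cover, and all minimum covers may have size two.

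The missing ingredient is a bound that counts \emph{light} elements. For an ideal non-MFMC clutter with minimal $w$, one has $\tau<(1-1/\sqrt{2m})\,r$, where $m$ is the number of elements of weight at most $3(r-\tau)$. It comes from a different count:
\begin{itemize}
\item Take $s=|\supp(\hat y)|\le r$. Mates give $\hat y_C\le 1-1/(|B_C\cap C|-1)$, so by AM--HM the mates meet their sets in at least $s/(s-\tau)+1$ elements on average.
\item Since $w(B_C)\le |B_C\cap C|+\tau-2$, many of these elements must have weight at most $k$.
\item Conversely, each element $e$ lies in at most $s-\tau+w_e-1$ support sets; to see this, look at a minimum cover through $e$ of size at least two.
\item Comparing the two estimates at $k=3(s-\tau)$ gives $s^2<2(s-\tau)^2m$.
\end{itemize}

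\textbf{How the pair structure and the constant $6/7$ enter.} The cuboid structure is used only through the normalization $w_i+w_{\bar{i}}=\tau$, which means at most one element of each pair has weight below $\tau/2$. If $r<7\tau/6$, then $3(r-\tau)<\tau/2$, so $m\le d$ and $\tau<(1-1/\sqrt{2d})(d+1)$. Otherwise $\tau\le 6r/7\le 6(d+1)/7$. So $6/7$ is simply the threshold of this case split, not the result of balancing mate counts against support size.

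\textbf{Step 1 is missing its key tool, though it is easily repaired.} The obstruction is not a minimal cover using both $i$ and $\bar{i}$; such a cover is just $\{i,\bar{i}\}$, of weight greater than $\tau$. The real danger is two different minimum covers $B\ni i$ and $B'\ni\bar{i}$, which exist because every element lies in some minimum cover. The resolution principle makes $(B\cup B')\setminus\{i,\bar{i}\}$ a cover of weight at most $2\tau-(w_i+w_{\bar{i}})<\tau$, a contradiction.

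\textbf{Step 2 is fine.} Moreover $\lceil (d+1)/2\rceil\le\lfloor 6(d+1)/7\rfloor$ holds for every $d\ge 1$, so no small cases need checking.
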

	
	This theorem gives a faster MFMC testing algorithm for cuboids compared to general clutters over the same number of elements. This is also a practical consideration as we set out to test the replication conjecture for cuboids.
		
	Another practical consideration is that cuboids over $2d$ elements can be described more compactly in a $d$-dimensional space, as follows. To each set $C$ in $\cC$, we associate a vector $p(C)\in \{0,1\}^d$ where $p(C)_i=1$ if and only if $C\cap \{i,\bar{i}\}=\{i\}$. In this manner, $\cC$ is the cuboid corresponding to the set-system $S\coloneqq\{p(C):C\in \cC\}\subseteq \{0,1\}^d$, denoted as $\cC=\cuboid(S)$.
	
	The set-system $S$ is \emph{cube-ideal} if $\cC$ is an ideal clutter; this property can also be described independently in terms of the set-system itself as we do in the next section. It is \emph{polar} if either $S\subseteq \{x:x_i=a\}$ for some $i\in [d]$ and $a\in \{0,1\}$, or $S$ contains antipodal points, i.e., a pair of points $p,q\in S$ such that $p+q=\1$. 
	It is \emph{strictly polar} if every restriction of $S$ obtained after fixing some coordinates to $0$ or $1$ is polar. For example, $\{000,110,101,011\}$ is not polar, $\{0000,1100,1010,0110\}$ is polar but is not strictly polar as it contains the former as a restriction.
	
	If $\cC$ has the packing property, then $S$ is a cube-ideal strictly polar set-system. The \emph{polarity conjecture} states that the converse is also true~\cite{abdi2020cuboids}. This conjecture is true if and only if the $\tau=2$ conjecture is true. In fact, the relationship between these two conjectures can be quantified as we see shortly.
	
	For an integer $\delta\geq 0$, the set-system $S$, or the cuboid $\cC$, has \emph{degree (at most) $\delta$} if the vertex-induced subgraph $G_d[\{0,1\}^d\setminus S]$ has maximum degree (at most) $\delta$, where $G_d$ denotes the skeleton graph of the hypercube $[0,1]^d$. For example, $\{00,10\}$ has degree $1$ while $\{000,110,101,011\}$ has degree $0$; see \Cref{fig:cuboid-degree-3} for another example. This terminology is due to Abdi, Cornu\'{e}jols, and Pashkovich who showed that there are only finitely many ideal mnp cuboids of bounded degree~\cite[Theorem~1.10~(i)]{ACP}.

\begin{figure}
\centering
\scalebox{0.7}{
\begin{tikzpicture}[
  o/.style={circle, draw, fill=black, minimum size=8pt, inner sep=0pt, line width=0.9pt},
  xm/.style={rectangle, draw, fill=white, minimum size=8pt, inner sep=0pt, line width=0.9pt},
  edge/.style={line width=1pt},
  hidden/.style={line width=1pt, dashed}
]

\begin{scope}
  \coordinate (FBL) at (0,0);     \coordinate (FBR) at (2.4,0);
  \coordinate (FTL) at (0,2.4);   \coordinate (FTR) at (2.4,2.4);
  \coordinate (BBL) at (0.9,0.9); \coordinate (BBR) at (3.3,0.9);
  \coordinate (BTL) at (0.9,3.3); \coordinate (BTR) at (3.3,3.3);
  \draw[edge] (FBL)--(FBR)--(FTR)--(FTL)--cycle;
  \draw[edge] (BBR)--(BTR)--(BTL);
  \draw[edge] (FBR)--(BBR);
  \draw[edge] (FTL)--(BTL); 
  \draw[edge] (FTR)--(BTR);
  \draw[hidden] (BBL)--(BBR);
  \draw[hidden] (BBL)--(BTL);
  \draw[hidden] (FBL)--(BBL);
  \node[o] at (FTL) {}; \node[o] at (FTR) {};
  \node[o] at (BTL) {}; \node[o] at (BTR) {}; \node[o] at (BBR) {};
  \node[xm] at (FBL) {}; \node[xm] at (FBR) {}; \node[xm] at (BBL) {};
\end{scope}

\begin{scope}[xshift=6cm]
  \coordinate (FBL) at (0,0);     \coordinate (FBR) at (2.4,0);
  \coordinate (FTL) at (0,2.4);   \coordinate (FTR) at (2.4,2.4);
  \coordinate (BBL) at (0.9,0.9); \coordinate (BBR) at (3.3,0.9);
  \coordinate (BTL) at (0.9,3.3); \coordinate (BTR) at (3.3,3.3);
  \draw[edge] (FBL)--(FBR)--(FTR)--(FTL)--cycle;
  \draw[edge] (BBR)--(BTR)--(BTL);
  \draw[edge] (FBR)--(BBR);
  \draw[edge] (FTL)--(BTL); 
  \draw[edge] (FTR)--(BTR);
  \draw[hidden] (BBL)--(BBR);
  \draw[hidden] (BBL)--(BTL);
  \draw[hidden] (FBL)--(BBL);
  \node[o] at (FTL) {}; \node[o] at (FTR) {}; \node[o] at (BTL) {};
  \node[o] at (BBL) {}; \node[o] at (BBR) {}; \node[o] at (FBR) {};
  \node[xm] at (BTR) {}; \node[xm] at (FBL) {};
\end{scope}

\end{tikzpicture}
}
\caption{A set-system $S\subseteq \{0,1\}^4$ of degree $3$, where the left and right cubes denote the restrictions $x_4=0$ and $x_4=1$, respectively. The round black vertices are precisely the points in $S$. The bottom left vertex has maximum degree $3$ in $G_4[\{0,1\}^4\setminus S]$.}
\label{fig:cuboid-degree-3}
\end{figure}
	
	The degree parameter is important to us because the polarity conjecture is true for set-systems of degree at most $\delta$ if and only if the $\tau=2$ conjecture holds for all clutters over at most $\delta$ elements. In particular, we have the following fact.
	
	\paragraph{Source of clutters with the packing property \cite[\S4]{abdi2020cuboids}.}
	\emph{Suppose the $\tau=2$ conjecture holds for all clutters over at most $\delta$ elements, for some integer $\delta\geq 0$. Let $d\geq 1$ be an arbitrary integer, and let $S\subseteq \{0,1\}^d$ be a cube-ideal strictly polar set-system of degree at most $\delta$. 
	Then $\cuboid(S)$ is guaranteed to have the packing property.}\bigskip
	
	The clutter $\cuboid(S)$ has the potential to be non-MFMC, thus being a counterexample to the replication conjecture. In fact, if this conjecture were false, then any counterexample $\cD$ over a minimum number $d$ of elements would lead to a counterexample of the above form, specifically for $S\subseteq \{0,1\}^d$ being the up-monotone set-system corresponding to $\cD$. 
	(Indeed, the replication conjecture holds for all cuboids if and only if it holds for all clutters.)
	
	
	Thus, for a fixed integer $\delta\geq 0$, we set out to test the replication conjecture for the cuboids of cube-ideal strictly polar set-systems of degree at most $\delta$. 	
	However, given that the class is infinite, can we even do the test in finite time? The following theorem answers this question in the affirmative, by allowing us to focus solely on cube-ideal strictly polar set-systems of dimension at most $\delta$.
		
	\begin{thm}\label{non-MFMC-cuboid}
	Every strictly polar set-system of degree $\delta$ with a non-MFMC cuboid has a restriction of dimension at most $\delta$ with a non-MFMC cuboid. In particular, if the replication conjecture holds for all cuboids of dimension at most $\delta$, then it holds for all cuboids of degree at most $\delta$.
	\end{thm}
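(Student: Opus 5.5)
The plan is to take a counterexample of minimum dimension and show that its dimension is at most $\delta$. Let $S\subseteq\{0,1\}^d$ be strictly polar of degree $\delta$ with $\cC=\cuboid(S)$ non-MFMC. Among all restrictions of $S$ with non-MFMC cuboid, choose one of minimum dimension. It is again strictly polar, and its degree is at most $\delta$, since the infeasible points of a restriction form an induced subgraph of $G_d[\{0,1\}^d\setminus S]$. So we may assume that every proper restriction of $S$ has an MFMC cuboid, and we aim to show $d\le\delta$. I will use two facts.

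\emph{Restrictions are minors.} The restriction $x_i=1$ corresponds to $\cC\setminus\bar i/i$, and $x_i=0$ corresponds to $\cC\setminus i/\bar i$.

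\emph{Special weight vector.} By \Cref{mfmc-testing-cuboid}, there is $w\in\bZ^E_{\ge0}$ with $w_i+w_{\bar i}=\tau\coloneqq\tau(\cC,w)$ for all $i\in[d]$ and $\tau(\cC,w)>\nu(\cC,w)$. Among all such $w$, take one that is entrywise minimal subject to being a counterexample.

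First I rule out a zero coordinate. Suppose $w_i=0$ for some $i$, so $w_{\bar i}=\tau$. Any cover containing $\bar i$ has weight at least $\tau$, so the minimum covers avoiding $\bar i$ are covers of $\cC\setminus\bar i/i$ of the same $w$-weight, and $\tau(\cC\setminus\bar i/i,w)=\tau$. A packing of $\cC\setminus\bar i/i$ lifts to a packing of $\cC$ by adding back $i$, which has weight $0$. Hence $\nu(\cC\setminus\bar i/i,w)\le\nu(\cC,w)<\tau$. So the restriction $x_i=1$ is non-MFMC, contradicting minimality. The case $w_{\bar i}=0$ is symmetric. Hence $w_i,w_{\bar i}\ge1$ for all $i$.

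Next I would derive a contradiction by a peeling step. For $x\in S$, let $C_x\in\cC$ be the corresponding member. Each pair $\{i,\bar i\}$ is a minimum $w$-cover, so any member meets each such pair exactly once. If $C_x$ also meets \emph{every} minimum $w$-cover exactly once, then $w-\chi_{C_x}\ge\0$ has $\tau(\cC,w-\chi_{C_x})=\tau-1$. By minimality of $w$ (after using \Cref{mfmc-testing-cuboid}-type reductions to keep the structure), we get $\nu(\cC,w-\chi_{C_x})=\tau-1$. Adding $C_x$ to such a packing then gives $\nu(\cC,w)\ge\tau$, a contradiction. So it suffices to find $x\in S$ whose member meets every minimum cover exactly once.

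The remaining task, and the main obstacle, is to show that such an $x$ exists whenever $d>\delta$. The minimum covers of $\cuboid(S)$ other than the pairs correspond to subcubes of $\{0,1\}^d\setminus S$. A member meets a cover at least twice exactly when the corresponding point lies at Hamming distance at least $1$ from the cover's "center" in several directions. My first attempt is a counting/neighbourhood argument using polarity. Since $S$ is polar and not contained in a facet (otherwise we pass to a restriction as above), it contains antipodal points $p,\1-p$. I would move from $p$ along coordinates, using that every infeasible point has at most $\delta<d$ infeasible neighbours, hence at least one feasible neighbour, to locate a feasible point avoiding all the "bad" non-pair minimum covers. If this direct argument fails, the fallback is to show that every non-pair minimum cover forces some coordinate of $w$ to be $0$, which we have already excluded. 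The "in particular" statement then follows immediately: a non-MFMC cuboid of degree at most $\delta$ has a restriction of dimension at most $\delta$ that is non-MFMC, and this restriction is itself a cuboid with the packing property.
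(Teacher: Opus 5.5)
Your reduction to a restriction of minimum dimension (and your deduction of the ``in particular'' part) is exactly what the paper does. The core of your argument, however, has a genuine gap: the peeling step is false.

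Knowing that $C_x$ meets every \emph{minimum} $w$-weight cover exactly once says nothing about covers $B$ with $w(B)\geq\tau+1$. If $|B\cap C_x|\geq w(B)-\tau+2$, then $B$ has weight at most $\tau-2$ under $w-\chi_{C_x}$, so $\tau(\cC,w-\chi_{C_x})=\tau-1$ does not follow. For instance, take the cuboid of $\{000,110,101,011\}$ with $w=\1$. Here $\tau=2>\nu=1$, and the only minimum covers are the pairs $\{i,\bar i\}$, so every member meets all of them once. Yet every member $C$ is itself a cover, whence $\tau(\cC,\1-\chi_C)=0$.

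Such a $B$ is precisely a \emph{mate} of $C_x$. The argument of \Cref{minimal-non-MFMC-weights}\ref{part:weights3} shows that for your entrywise minimal $w\geq\1$, \emph{every} member of $\cC$ has a mate. So the point $x$ you set out to find, once the condition is corrected to ``$C_x$ has no mate'', cannot exist. No neighbourhood argument starting from antipodal points can produce it. Your fallback about non-pair minimum covers is beside the point (and unproved), because the obstruction lives in non-minimum covers.

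What is actually needed is to show that mates for all members are incompatible with every infeasible point having a feasible neighbour. This is \Cref{thm:degree-dimension}, and it is where all the work lies. The paper uses three ingredients:
\begin{itemize}
\item the structure of mates from \Cref{prop:acl}\ref{it:mate}: a mate has at most two elements of weight at least $\tau/2$, and if it has two, then it lies inside $C$, those two have weight exactly $\tau/2$, and its other elements have weight $1$;
\item the resolution principle;
\item the observation that two covers using disjoint sets of coordinate pairs yield an infeasible point all of whose neighbours are infeasible.
\end{itemize}
With these, it shows by induction on $l$ that, after a lopsided twisting with $\0\notin S$, there are weight-$\tau$ covers $B_1,\dots,B_l$ with $B_i\cap[\bar d]=\{\bar{j_i}\}$ and $B_i\cap\{j_1,\dots,j_l\}=\emptyset$. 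The inductive step ends with a $2$-SAT implication-graph argument. At $l=d$ this forces $B_i=\{\bar{j_i}\}$, a contradiction.

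Separately, your zero-coordinate step uses the wrong restriction. If $w_i=0$, no member containing $i$ can occur in a $w$-packing, so lifting packings of $\cC\setminus\bar i/i$ by adding $i$ fails. You should instead restrict to $x_i=0$, i.e.\ pass to $\cC\setminus i/\bar i$. There, covers lift by adding $i$ (weight $0$), and packings of size at most $\tau=w_{\bar i}$ lift by adding $\bar i$.
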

	
We then use a computer program to prove the following theorem. 
	
\begin{thm} \label{cor:deg_nine}
	Let $d \ge 1$ be an arbitrary integer, and let $S\subseteq \{0,1\}^d$ be a cube-ideal strictly polar set-system. \begin{enumerate}[label=(\alph*)]
	\item If $S$ has degree at most $9$, then $\cuboid(S)$ has the MFMC property. 
	\item If $S$ is up-monotone, and $d\leq 10$, then $\cuboid(S)$ has the MFMC property. 
	\end{enumerate}
	In particular, the replication conjecture holds for cuboids of degree at most $9$, for cuboids over at most $18$ elements, and for clutters over at most $10$ elements. 
\end{thm}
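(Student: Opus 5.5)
The proof is a reduction to finitely many instances followed by a computer search; the mathematical content lies in making the search space finite and small enough.

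\textbf{Part (a).} First, \Cref{non-MFMC-cuboid} says that if some cube-ideal strictly polar $S$ of degree at most $9$ had a non-MFMC cuboid, then so would a restriction $S'$ of dimension at most $9$. Next I check that restriction preserves the relevant hypotheses:
\begin{itemize}
\item restrictions of strictly polar set-systems are strictly polar, by definition;
\item restrictions of cube-ideal set-systems are cube-ideal, since $\cuboid(S')$ is a minor of $\cuboid(S)$ and idealness is minor-closed;
\item degree does not increase under restriction, since the infeasible points of $S'$ induce a subgraph of $G_d[\{0,1\}^d\setminus S]$.
\end{itemize}
So it suffices to enumerate all cube-ideal strictly polar $S\subseteq\{0,1\}^{d}$ with $d\le 9$ and test $\cuboid(S)$ for MFMC.

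The MFMC test uses \Cref{mfmc-testing-cuboid}. We only need weights with $w_i+w_{\bar i}=\tau(\cC,w)=:t$ for all $i$, where $t\le \max\{\lfloor 6(d+1)/7\rfloor,\lfloor(1-1/\sqrt{2d})(d+1)\rfloor\}$. For $d\le 9$ this gives $t\le 8$. For each such $t$ we search simultaneously for a set-system $S$ and a weight $w$ with $\tau(\cC,w)=t>\nu(\cC,w)$, subject to $S$ being cube-ideal and strictly polar.

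I would encode this as a SAT instance, not enumerate $S$ explicitly, since there are $2^{2^9}$ candidate set-systems.
\begin{itemize}
\item Boolean variables $x_p$ for $p\in\{0,1\}^d$ encode membership $p\in S$.
\item Strict polarity is a conjunction over all restrictions (faces of the cube) of the disjunction ``contained in a facet, or contains an antipodal pair''. This is expressible with auxiliary variables.
\item Cube-idealness is harder to encode directly. I would replace it by the weaker necessary conditions supplied by the packing property, namely that every minor packs. Any $S$ the solver outputs can then be checked for idealness afterwards, and the constraint refined lazily in a CEGAR loop if needed.
\item $\tau(\cC,w)\ge t$: every set of $\cC$ has $w$-weight at least $t$. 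Since cuboid sets correspond to points $p$, each point $p$ with $w(p)<t$ must be excluded, i.e.\ gives a clause $\neg x_p$ per fixed $w$.
\item $\nu(\cC,w)<t$: there is no packing of $t$ sets of $S$ using each element $u$ at most $w_u$ times. I would phrase this as an infeasibility (UNSAT) certificate, or break symmetry by fixing $w$ and iterating over all $w$ up to coordinate flips. For fixed $w$, the number of size-$t$ packings is finite, and each must hit a point outside $S$; this gives one clause per packing.
\end{itemize}
Symmetry breaking under the hyperoctahedral group (coordinate permutations and flips) keeps the instance manageable. An UNSAT answer for every $d\le 9$ and every admissible $t$ proves (a).

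\textbf{Part (b).} Take $S$ up-monotone with $d\le 10$. Here the degree bound is unavailable, so we run the same encoding directly in dimension $d\le 10$, adding the monotonicity clauses $x_p\Rightarrow x_q$ for $p\le q$. Monotonicity shrinks the search space enormously, and the bound $t$ from \Cref{mfmc-testing-cuboid} is at most $9$ for $d=10$.

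\textbf{Consequences.}
\begin{itemize}
\item Cuboids of degree at most $9$: by the ``source of clutters'' fact, any cuboid with the packing property comes from a cube-ideal strictly polar set-system, so (a) applies.
\item Cuboids over at most $18$ elements: these have dimension $d\le 9$, so their degree is at most $d\le 9$ and (a) applies.
\item Clutters over at most $10$ elements: a minimal counterexample $\cD$ over $d$ elements gives the counterexample $\cuboid(S)$ with $S\subseteq\{0,1\}^d$ up-monotone, as explained in the introduction. Since $\cD$ has the packing property, $S$ is cube-ideal and strictly polar, so (b) applies.
\end{itemize}

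\textbf{Main obstacle.} I expect the hardest step to be encoding $\nu(\cC,w)<t$ together with cube-idealness compactly enough for the solver to finish at $d=9,10$. My first attempt would be to fix $w$ up to symmetry, generate packing clauses lazily (add a clause only when the solver proposes an $S$ admitting that packing), and verify idealness externally on candidate solutions.
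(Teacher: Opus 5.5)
Your outer reduction is sound and is essentially the paper's. Restrictions preserve cube-idealness, strict polarity and degree, so \Cref{non-MFMC-cuboid} brings (a) down to $d\le 9$, and \Cref{mfmc-testing-cuboid} bounds $t$. The gap is in the SAT encoding, which as written can never certify anything.

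You encode $\tau(\cC,w)\ge t$ by forbidding points $p$ whose member $C_p$ has $w(C_p)<t$. That bounds the minimum weight of a \emph{member} of $\cC$ (i.e.\ $\tau(b(\cC),w)$), not of a \emph{cover}. For positive weights, which is the relevant case, these clauses are even vacuous: $t\le d$, while every member of a $d$-dimensional cuboid has weight at least $d$. Consequently every instance is satisfied by a single point $S=\{p\}$. This $S$ is cube-ideal and strictly polar, and $\nu(\cuboid(S),w)=\min_{e\in C_p}w_e\le t-1$. But $\tau(\cuboid(S),w)$ is the same number, so the witness is spurious.

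The correct constraint is that no $B$ with $|B\cap\{i,\bar{i}\}|\le 1$ for all $i$ and $w(B)\le t-1$ is a cover, i.e.\ $S\cap R_{I_0,I_1}\neq\emptyset$ for every such light $B$. This is an existential condition, and the paper encodes it with the auxiliary variables $y_{I_0,I_1}$, set to \texttt{True} for light $(I_0,I_1)$.

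Two further steps fail even after that fix.

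\textbf{Cube-idealness.} ``Every minor packs'' is not a necessary condition here. That a cube-ideal strictly polar $S$ has a cuboid with the packing property is exactly the polarity conjecture, which is equivalent to the $\tau=2$ conjecture and known only up to $8$ elements. Imposing it searches a smaller space, so it yields at most the replication consequences, not (a) and (b) as stated. The paper instead encodes cube-idealness exactly via \Cref{lem:cube-ideal-SAT}. By Lehman's theorem and the width--length inequality, it suffices to forbid localizations of $m$-element minors that have no member of size $\le r-1$ and no cover of size $\le s-1$. Only the four triples $(m,r,s)\in P$ with $m\le 10$ are needed.

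\textbf{Encoding $\nu<t$.} One clause per size-$t$ packing is out of reach. Since $w_i+w_{\bar{i}}=t$, these packings are $t$-row $0/1$ matrices with column sums exactly $w_i$. For $d=9$, $t=8$, $w_i\equiv 4$ there are on the order of $70^9/8!\approx 10^{12}$ of them, and a lazy refinement loop has no useful bound.

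\textbf{The missing idea.} The paper never encodes $\nu<\tau$ at all. It fixes a counterexample of minimum dimension with entry-wise minimal $w$ and imposes only consequences of that choice:
\begin{itemize}
\item every $C\in\cC$ has a mate $B$ with $w(B)\le |B\cap C|+\tau-2$ (\Cref{prop:acl});
\item $w$ is lopsided and $\0\notin S$ (\Cref{lem:zero-infeasible});
\item $\tau\le d$;
\item $w$ is admissible, by \Cref{cl:average-mate-small-weight} and \Cref{cl:average-set-small-weight};
\item in the tightly admissible case, the only weight-$\tau$ covers are the $\{i,\bar{i}\}$.
\end{itemize}
Unsatisfiability of this weaker system (\Cref{prop:sat-verified}) already gives the contradiction. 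The mate condition is just one clause per point over the $y$ variables. Together with the drastic pruning of weight vectors by admissibility, this is what makes $d=9$ and up-monotone $d=10$ feasible.
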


Our computational approach proceeds irrespective of whether the $\tau=2$ conjecture holds for clutters over a certain number of elements, so the theorem above is not predicated on this conjecture. Of course, we have the added benefit that any counterexample to the replication conjecture would also yield a counterexample to the $\tau=2$ conjecture.

\paragraph{A SAT-based approach.} A natural first attempt at \cref{cor:deg_nine} would be to generate all cube-ideal strictly polar set-systems of small dimension and test the MFMC property for each.
The number of such set-systems grows rapidly, however, and this method is already infeasible in practice around dimension 6.
To reach dimension 9, we instead use a SAT-based approach that, to the best of our knowledge, is novel in the context of ideal clutters.
Using a refinement of \cref{mfmc-testing-cuboid}, for each fixed dimension $d$, we isolate a relatively small set of weight functions $w \in \bZ_{\ge 0}^{E}$ for which the equality $\tau(\cuboid(S), w)=\nu(\cuboid(S), w)$ needs to be verified.
For each such $w$, we encode the existence of a cube-ideal strictly polar set-system $S\subseteq \{0,1\}^d$ with $\nu(\cuboid(S), w) < \tau(\cuboid(S), w)$ as a SAT instance.
In this way, the computer-assisted part of the proof of \cref{cor:deg_nine} reduces to the certifying the unsatisfiability of a small number of SAT instances.
We establish the required unsatisfiability with the state-of-the-art SAT solver CaDiCaL~\cite{cadical}.
For each SAT instance, CaDiCaL produces a DRAT proof of unsatisfiability that can be checked independently with \texttt{drat-trim}~\cite{drattrim}.
For up-monotone set-systems, this approach lets us push the verification to dimension 10 as well.

\paragraph{Applications to commutative algebra.} 
For every clutter $\cC$ over ground set $[n]$, there is a canonical associated square-free monomial ideal $\eye\coloneqq \eye(\cC)$ over the polynomial ring $R\coloneqq \mathbb{C}[x_1,\ldots,x_n]$. More specifically, $\eye$ is the set of linear combinations of $\Pi_{e\in C} x_e,C\in \cC$, where the coefficients come from $R$. The square-free monomials $\Pi_{e\in C} x_e,C\in \cC$ are referred to as the \emph{generators} of the ideal, and $\eye$ is denoted as $\big(\Pi_{e\in C} x_e:C\in \cC\big)$. 

Strikingly, the MFMC property of $\cC$ can be equivalently characterized in commutative algebraic terms of the ideal $\eye$. In short, $\cC$ is MFMC if and only if the `ordinary' and `symbolic' powers of $\eye$ coincide throughout. Let us elaborate.

Let $k\geq 1$ be an integer. The $k\textsuperscript{th}$ \emph{ordinary power} $\eye^k$ refers to the ideal generated by all $k$-fold products of the generators of $\eye$. 

Symbolic powers in general have a rather technical definition, but in our setting where $\eye$ is equal to its radical and the host field $\mathbb{C}$ of the polynomial ring is algebraically closed of characteristic $0$, there is a much simpler and geometric definition that we can work with, thanks to the \emph{Zariski-Nagata theorem} for radical ideals~(see \cite[Corollary~2.9]{Sidman09}).

Denote by $V\coloneqq V(\eye)$ the subset of $\mathbb{C}^n$ on which every polynomial in $\eye$ vanishes.  
As $\eye$ is a radical ideal, this is reversible, so $\eye$ is the set of all polynomials that vanish on $V$. 

For an integer $k\geq 1$, the $k\textsuperscript{th}$ \emph{symbolic power} $\eye^{(k)}$ is the set of polynomials $f$ that vanish on $V$ up to `order $k$', i.e., $f$ and all its partial derivatives of order at most $k-1$ vanish on $V$. 

While $I^{k}\subseteq I^{(k)}$ for all $k\geq 1$, equality does not necessarily hold here. An illustrative example is $\eye=(xy,yz,zx)$. In this case, 
$\eye^2=(x^2y^2,y^2z^2,z^2x^2,x^2yz,xy^2z,xyz^2)$, which does not contain the polynomial $f(x,y,z)=xyz$. However, this polynomial does belong to $\eye^{(2)}$. To see this, note that $V(\eye) = \{(t,0,0):t\in \mathbb{C}\}\cup \{(0,t,0):t\in \mathbb{C}\}\cup \{(0,0,t):t\in \mathbb{C}\}$. As $xyz$ vanishes up to order $2$ on $V(\eye)$, it follows that $f\in \eye^{(2)}\setminus I^2$. 

Symbolic powers form a central topic in modern commutative algebra with core conjectures surrounding the comparison of ordinary and symbolic powers of ideals as initiated in \cite{Hochster02}. See the survey~\cite{Dao18} for more background on the state of the field. 
Integer programming turns out to be quite helpful, as for each integer $\tau\geq 1$, $\eye^k = \eye^{(k)}$ for all $k\in \{1,\ldots,\tau\}$ if and only if $\tau(\cC,w)=\nu(\cC,w)$ for all $w\in \bZ^{[n]}_{\geq 0}$ such that $\tau(\cC,w)\leq \tau$. In particular, $I^{k}= I^{(k)}$ for all $k\geq 1$ if and only if $\cC$ is MFMC (see \cite{Francisco13}). Subsequently, \Cref{mfmc-testing} can be rephrased equivalently as follows.

\begin{cor}\label{mfmc-testing-rephrased}
		Let $\eye$ be a square-free monomial ideal over $n$ variables. Then $\eye^{k}=\eye^{(k)}$ for all $k\geq 1$ if, and only if, $\eye^{k}=\eye^{(k)}$ for all $k\leq \max\{\lceil n/2\rceil, \lfloor n-\sqrt{4n+1}+1\rfloor\}$.\qed
	\end{cor}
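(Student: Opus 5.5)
This corollary is a direct translation of \Cref{mfmc-testing} through the integer-programming description of symbolic powers recalled just above. I will not re-derive that description.

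Let $\eye$ be a square-free monomial ideal over $n$ variables. Take $\cC$ to be the clutter over $[n]$ whose members are the supports of the minimal generators of $\eye$. Then $\eye=\eye(\cC)$, because the minimal monomial generators of a square-free monomial ideal are square-free and pairwise non-dividing, so their supports form a clutter. Write $t\coloneqq\max\{\lceil n/2\rceil, \lfloor n-\sqrt{4n+1}+1\rfloor\}$.

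The forward direction is trivial. For the reverse direction, assume $\eye^k=\eye^{(k)}$ for all $k\le t$. By the cited equivalence \cite{Francisco13}, applied with the threshold $t$, this is the same as saying that $\tau(\cC,w)=\nu(\cC,w)$ for every $w\in\bZ^{[n]}_{\geq0}$ with $\tau(\cC,w)\le t$.

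In particular, the equality holds for every $w$ in the restricted family of \Cref{mfmc-testing}, that is, those $w$ in which every element lies in a minimum $w$-weight cover and $\tau(\cC,w)\le t$. \Cref{mfmc-testing} then gives that $\cC$ is MFMC. Applying the equivalence once more, this time its "for all $k\ge1$" form, gives $\eye^k=\eye^{(k)}$ for all $k\geq1$.

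The only point needing care is the degenerate case where $\eye$ is the zero ideal or the unit ideal, so that $\cC$ is empty or contains the empty set. In that case both sides of the corollary hold trivially, and these clutters are MFMC by convention.
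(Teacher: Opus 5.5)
Your proposal is correct and matches the paper's argument. The paper gives no separate proof (the corollary is marked \qed) because it is \Cref{mfmc-testing} translated through the equivalence of \cite{Francisco13}: $\eye^k=\eye^{(k)}$ for all $k\le t$ holds if and only if $\tau(\cC,w)=\nu(\cC,w)$ for all $w$ with $\tau(\cC,w)\le t$, and your proof applies exactly this, with the zero and unit ideals handled by a harmless extra remark.
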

	
	\Cref{mfmc-testing-cuboid} gives an improved upper bound on $k$ for cuboids, a class of clutters which are also studied in commutative algebra and referred to as `$2$-partitionable' clutters~\cite{FloresMendez08}.
	
	The replication conjecture is also an important problem in commutative algebra, though it is referred to as the `Conforti-Cornu\'{e}jols conjecture' or the `packing problem' in this community. \Cref{cor:deg_nine} verifies this conjecture for square-free monomials over at most $10$ variables, and those over at most $18$ variables that correspond to a $2$-partitionable clutter.

\paragraph{Outline.} After providing the omitted definitions and some preliminaries in \S\ref{section:prelim}, we prove \Cref{mfmc-testing}, \Cref{tau=2->replicon}, and \Cref{mfmc-testing-cuboid} in \S\ref{section:mfmc-testing} where we discuss MFMC property testing. \Cref{non-MFMC-cuboid} is proved in \S\ref{section:degree-dimension} where non-MFMC cuboids are studied. We explain our computer verification of \Cref{cor:deg_nine} in \S\ref{section:computational}, and finish off with two future research directions in \S\ref{section:conclusion}.
	
	\section{Preliminaries}\label{section:prelim}
	
	\paragraph{Clutters.} 
	A \emph{clutter} $\cC$ over a finite \emph{ground set} $E$ of \emph{elements} is a family of subsets of $E$ such that no set is contained in another. Its \emph{incidence matrix}, denoted $M(\cC)\eqqcolon A$, is the $\cC$-by-$E$ matrix whose rows are the incidence vectors of the sets in $\cC$. Given $w\in \bZ^E_{\geq 0}$, denote by 
	\begin{align}
	\tau(\cC,w):=&\min\left\{w^\top x: Ax\geq \1,x\geq \0, x\in \bZ^E\right\}\tag{IP}\label{I-primal-set-covering}\\
	\nu(\cC,w):=&\max\left\{\1^\top y:A^\top y\leq w, y\geq \0, y\in \bZ^{\cC}\right\},\tag{ID}\label{I-dual-set-covering}
	\end{align} both of which have finite optima. Denote by $\tau_\star(\cC,w),\nu_\star(\cC,w)$ the fractional analogues of the parameters above. Observe that $\tau(\cC,w)\geq \tau_\star(\cC,w)=\nu_\star(\cC,w)\geq \nu(\cC,w)$, where the equality follows from strong linear programming duality. 
	
	A \emph{cover} of $\cC$ is a subset of $E$ that intersects every set at least once; it is \emph{minimal} if it does not contain another cover. The family of minimal covers forms another clutter over the same ground set, which we call the \emph{blocker of $\cC$} and denote as $b(\cC)$. 
	The \emph{covering number} is the minimum size of a cover, which is also $\tau(\cC,\1)$. The \emph{packing number} is the maximum number of pairwise disjoint sets in $\cC$, which is also $\nu(\cC,\1)$. A clutter is \emph{intersecting} if $\tau(\cC,\1)\geq 2$ and $\nu(\cC,\1)=1$. 
	
	Given disjoint $I,J\subseteq E$, the \emph{minor} $\cC\setminus I/J$ obtained after \emph{deleting} $I$ and \emph{contracting} $J$ is the clutter over ground set $E\setminus (I\cup J)$ whose sets are the inclusion-wise minimal sets in $\{C\setminus J:C\in \cC, C\cap I=\emptyset\}$. The minor is \emph{proper} if $I\cup J\neq \emptyset$. 
	An element $e$ of $\cC$ is \emph{free} if it is contained in no member of $\cC$.
	To \emph{replicate} an element $e$ of $\cC$, we mean introducing a new copy $f$, and adding a set $C\triangle \{e,f\}$ to the clutter, for all sets $C\in \cC$ containing $e$. 
	
	Minor operations and replication can be emulated in terms of the optimization problems above. More specifically, deleting $e$ corresponds to setting $x_e=1$ and $w_e=0$, contracting $f$ corresponds to setting $x_f=0$ and $w_f=\infty$, and replicating $e$ corresponds to increasing the weight of $e$ by $1$. Subsequently, all these operations preserve both idealness and the MFMC property.
	
	\paragraph{Cuboids.} 
	Let $S\subseteq \{0,1\}^{[d]}$ be a set-system. 
	$S$ is \emph{up-monotone} if $p\leq q$ and $p\in S$ implies $q\in S$, for all $p,q\in \{0,1\}^{[d]}$. 	
	We let $[\bar{d}]\coloneqq \{\bar{1}, \dots, \bar{d}\}$.
	For disjoint subsets $I_0,I_1\subseteq [d]$, consider the set-system in $\{0,1\}^{[d]\setminus (I_0\cup I_1)}$ obtained from $S\cap \{x:x_i=0~\forall i\in I_0, x_j=1~\forall j\in I_1\}$ after dropping the coordinates in $I_0\cup I_1$; this is called a \emph{restriction of $S$}. 
	For $J\subseteq [d]$, the set-system in $\{0,1\}^{[d]\setminus J}$ after dropping the coordinates in $J$ from every point in $S$ is called a \emph{projection of $S$}. 
	A \emph{minor of $S$} is any set-system obtained after a series of restrictions and projections. 
	For $x\in \{0,1\}^{[d]}$, define the \emph{twisting} $S\triangle x:=\{p\triangle x:p\in S\}$, where $\triangle$ denotes entry-wise sum modulo $2$. 
	The \emph{localization of $S$ at $x$}, denoted $\loc(S;x)$, is the clutter over ground set $[d]$ whose sets correspond to the points in $S\triangle x$ of minimal support.
	For example, for $S=\{000,110,101,011\}$, the localization at $\0$ is $\{\emptyset\}$, while the localization at $\1$ is $\{\{1\},\{2\},\{3\}\}$.
	
	Recall that $S$ is polar if either $S\subseteq \{x:x_i=a\}$ for some $i\in [d]$ and $a\in \{0,1\}$, or $S$ contains antipodal points. Recall further that $S$ is strictly polar if every restriction of $S$ is polar. Equivalently, $S$ is polar if $\cuboid(S)$ packs, and $S$ is strictly polar if $\cuboid(S)$ has no intersecting minor. 
	
	Recall that $S$ is cube-ideal if $\cuboid(S)$ is an ideal clutter. Equivalently, $S$ is cube-ideal if its convex hull $\conv(S)$ can be described by $\0\leq x\leq \1$ and \emph{generalized set covering (GSC)} inequalities, which are of the form $\sum_{i\in I}x_i+\sum_{j\in J}(1-x_j)\geq 1$ for disjoint subsets $I,J\subseteq [d]$~\cite[Theorem~1.6]{abdi2020cuboids}. Also equivalent is that $S$ is cube-ideal if every localization is an ideal clutter~\cite[Theorem~1.8]{abdi2020cuboids}.
	
	Let $\mathfrak{P}$ be a minor-closed property defined on clutters. $\mathfrak{P}$ is a \emph{local property} if, for every set-system $S$, $\cuboid(S)$ has property $\mathfrak{P}$ if, and only if, all localizations of $S$ have property $\mathfrak{P}$. Thus, given the equivalence mentioned above, idealness is a local property. While the packing property is a non-local property, it is a local property for strictly polar set-systems~\cite[Theorem~1.11]{abdi2020cuboids}. This gives some evidence for the polarity conjecture, stating that for strictly polar set-systems, the cuboid is ideal if and only if it has the packing property.
	
	Let $\sum_{i\in I}x_i+\sum_{j\in J}(1-x_j)\geq 1$ and $\sum_{i\in I'}x_i+\sum_{j\in J'}(1-x_j)\geq 1$ be valid GSC inequalities for $S$, for disjoint subsets $I,J\subseteq [d]$ and disjoint subsets $I',J'\subseteq [d]$. Suppose $k\in I\cap J'$ for some index $k\in [d]$. It can be readily seen that $$
	\sum_{i\in I\cup I', i\neq k}x_i+\sum_{j\in J\cup J', j\neq k}(1-x_j)\geq 1$$ is also a valid inequality for $S$. This fact is known as the \emph{resolution principle} and the inequality is derived from the other two inequalities after \emph{resolving $k$}. Equivalently, given two covers $B_1,B_2$ of $\cuboid(S)$ such that $B_1\cap \{k,\bar{k}\}=\{k\}$ and $B_2\cap \{k,\bar{k}\}=\{\bar{k}\}$, the set $(B_1\cup B_2)\setminus \{k,\bar{k}\}$ is also a cover.
	
	\paragraph{Minimally non-ideal clutters.} A clutter is \emph{minimally non-ideal (mni)} if it is non-ideal but every proper minor is ideal. Given an integer $n\geq 3$, a \emph{delta of dimension $n$} is any clutter whose ground set can be relabelled as $[n]$ so that its sets are $\{1,2\},\{1,3\},\ldots,\{1,n\},\{2,3,\ldots,n\}$. Every delta is an intersecting mni clutter. We need the following powerful theorem.
	
	\begin{thm}[Lehman~\cite{Lehman90}]\label{lehman}
		Let $\cC$ be an mni clutter over ground set $[n]$. Then $$\tau(\cC,\1)>\tau_\star(\cC,\1)=\nu_\star(\cC,\1)>\nu(\cC,\1),$$ $\tau(\cC,\1)= \lceil\tau_\star(\cC,\1) \rceil$, $M(\cC)$ is a matrix of rank $n$, and $\tau_\star(\cC,\1)\leq \frac{n}{2}$. Furthermore, $\cC$ is either $\Delta_n,n\geq 4$, or $\cC$ has exactly $n$ minimum size sets of size $r\geq 2$, and exactly $n$ minimum covers of size $s\geq 2$, which can respectively be labelled as $C_1,\ldots,C_n\in \cC$ and $B_1,\ldots,B_n\in b(\cC)$ such that 
		$$
		|C_i\cap B_j| = \left\{
		\begin{array}{ll}
		1+\lambda& \quad\text{ if $i=j$}\\
		1& \quad\text{ if $i\neq j$}
		\end{array}
		\right.
		$$
		where $\lambda=rs-n\geq 1$. 
		\end{thm}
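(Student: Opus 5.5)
Since this is Lehman's theorem, which the paper only cites, I would follow the classical polyhedral route of Lehman, as streamlined by Seymour and Padberg, rather than attempt anything new. Let $A=M(\cC)$ and $Q=\{x\geq \0: Ax\geq \1\}$.

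\textbf{Step 1: a unique fractional vertex.} Deleting $e$ corresponds to intersecting $Q$ with $x_e=1$, and contracting $e$ to intersecting with $x_e=0$. So minimality says every fractional vertex of $Q$ lies strictly inside $(0,1)^n$. I would then show there is exactly one such vertex $\bar x$. Two fractional vertices would span an edge of $Q$. Moving along the line through that edge until a coordinate hits $0$ or $1$ produces a fractional vertex on a proper face. This face corresponds to a minor, which must be ideal, giving a contradiction. Since $\bar x$ is a vertex with $0<\bar x<\1$, its tight rows contain a nonsingular $n\times n$ submatrix, so $\rank M(\cC)=n$.

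\textbf{Step 2: blocker duality.} By Lehman's width--length theorem, $\cC$ is ideal if and only if $b(\cC)$ is. This gives that $b(\cC)$ is also mni, and by Step 1 it has a unique fractional vertex $\bar z$ of $\{z\geq\0: M(b(\cC))z\geq \1\}$. Let $\bar A$ be the rows of $A$ tight at $\bar x$, and $\bar B$ the rows of $M(b(\cC))$ tight at $\bar z$.

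\textbf{Step 3: the matrix equation (the core).} I would prove that, outside the degenerate case, $\bar A$ and $\bar B$ are square $n\times n$ and, after reordering rows, satisfy
$$\bar A\bar B^\top = J+\lambda I$$
for some integer $\lambda\geq 1$. Here $J$ is the all-ones matrix.

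1. The entry bound comes from the definitions. Each tight set $C$ meets each cover $B$, so $|C\cap B|\geq 1$.
2. Complementary slackness, together with $\bar x^\top\bar z$-type identities, links $\bar x$ to $\bar B$ and $\bar z$ to $\bar A$.
3. Uniqueness of the fractional vertices forces every element to lie in the same number $r$ of tight sets and every tight set to have size $r$. Symmetrically, every tight cover has size $s$.
4. Lehman's matrix lemma then applies. If $A B^\top = J+D$ with $D$ diagonal, positive and nonsingular, then either the structure is that of a degenerate projective plane, or $A$ and $B$ are $r$- and $s$-regular with $D=\lambda I$, where $\lambda = rs-n$.

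This step is the main obstacle. I would first try Seymour's short proof of the matrix lemma, which uses the determinant and eigenvalue structure of $J+D$. The degenerate branch should reduce exactly to $\cC=\Delta_n$, and the condition $n\geq 4$ comes from the fact that $\Delta_3$ is the triangle. Minimality of $\cC$ is needed to rule out other degenerate configurations.

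\textbf{Step 4: reading off the remaining claims.} In the non-degenerate case, $\bar x=\tfrac1r\1$ and $\bar z=\tfrac1s\1$. Hence
$$\tau_\star(\cC,\1)=\nu_\star(\cC,\1)=n/r\leq n/2,$$
using $r\geq 2$, which holds since $\cC$ has no singleton sets. The tight sets are exactly $n$ minimum-size sets $C_1,\dots,C_n$ of size $r$. The tight covers are exactly $n$ minimum covers $B_1,\dots,B_n$ of size $s$. Their intersection pattern is the displayed one.

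For $\tau(\cC,\1)>\tau_\star(\cC,\1)$ and $\tau(\cC,\1)=\lceil\tau_\star(\cC,\1)\rceil$, I would use that $s$ is the minimum cover size, so $\tau(\cC,\1)=s$. Then $rs=n+\lambda>n$ gives $s>n/r$. The upper bound $s\leq\lceil n/r\rceil$ should follow by contracting a suitable element and invoking idealness of that minor.

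For $\nu(\cC,\1)<\nu_\star(\cC,\1)$, I would argue that $n/r$ integral disjoint sets would give an integral optimal dual solution. Combined with idealness of proper minors, this would make $\cC$ ideal, contradicting that $\cC$ is non-ideal.
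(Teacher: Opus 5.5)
Your outline names the right classical ingredients, but it does not prove the statement. Its vague points are exactly where the content of Lehman's theorem lies; the paper gives no proof of its own and only cites Lehman.

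\textbf{Step~3 does not establish the hypothesis of Lehman's matrix lemma.} This is the central gap. Item~1 only gives $\bar A\bar B^\top\geq J$ entrywise. Two things are never shown:
\begin{itemize}
\item that $\bar x$ has exactly $n$ tight sets (this non-degeneracy is what yields ``exactly $n$ minimum size sets'');
\item that tight sets and tight covers can be paired bijectively, so that each tight cover meets its partner in at least two elements and every other tight set in exactly one.
\end{itemize}
``Complementary slackness together with $\bar x^\top\bar z$-type identities'' does not supply this, because LP duality for a fixed weight never uses that proper minors are ideal. Minimality has to enter through the integral covering polyhedra of the minors. For instance, let $\bar x'$ be the restriction of $\bar x$ to $[n]\setminus\{j\}$. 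Then $\bar x'\in Q(\cC\setminus j)$ and $\bar x'/(1-\bar x_j)\in Q(\cC/j)$, and both polyhedra are integral. So each point dominates a convex combination of covers, and by tightness those covers meet every tight set avoiding (respectively containing) $j$ exactly once. From such covers you must still build the square pairing and prove non-degeneracy; that is the real work in the classical proofs.

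\textbf{Item~3 of Step~3 is unjustified.} Uniqueness of the fractional vertex does not by itself give constant row and column sums. Regularity is an output of the matrix lemma, not an input.

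\textbf{Your Step~1 proof of uniqueness fails.} Two fractional vertices need not be adjacent. Even if $u,v$ span an edge, the line through them meets $Q$ exactly in $[u,v]$, since both are vertices. Moving beyond $u$ or $v$ therefore leaves $Q$ while still inside the open cube, and no fractional vertex on a face $x_j\in\{0,1\}$ is produced. The statement does not assert uniqueness, so you could avoid it, but Step~3 as written relies on it.

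\textbf{Two smaller problems in Step~4.} First, your argument for $\nu(\cC,\1)<\nu_\star(\cC,\1)$ is a non sequitur: an integral optimal packing for the single weight $\1$ says nothing about idealness. The correct argument is arithmetic. From $1+\lambda=|C_i\cap B_i|\leq r$ and $\lambda\geq 1$ you get $n/r<s\leq (n-1)/r+1$. Hence $r\nmid n$ and $s=\lceil n/r\rceil$. This gives $\tau=\lceil\tau_\star\rceil$ without any contraction argument, and also $\nu\leq\lfloor n/r\rfloor<n/r=\nu_\star$.

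Second, the first sentence of the theorem must be checked separately for $\Delta_n$, which Step~4 omits. There $\tau_\star=\nu_\star=2-\frac{1}{n-1}\leq\frac{n}{2}$, $\tau=2$ and $\nu=1$.
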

		
		In particular, an mni clutter does not pack, implying in turn that every clutter with the packing property is ideal. Subsequently, every mnp clutter is either ideal or mni.
	
	\paragraph{Non-MFMC clutters.} We need the following lemma, which borrows ideas from \cite[Proposition~3]{Cornuejols00}.

\begin{lemma}\label{minimal-non-MFMC-weights}
	Let $\cC$ be a clutter over ground set $E$. Suppose $\tau(\cC,w)>\nu(\cC,w)=:\nu$ for some $w\in \bZ_{\geq 0}^E$. 
	Choose $\bar{w}\in \bZ_{\geq 0}^E$ such that $\0\leq \bar{w}\leq w$, and $\bar{w}$ is entry-wise minimal subject to satisfying $\tau(\cC,\bar{w})>\nu(\cC,\bar{w})$. 
	Then the following statements hold: \begin{enumerate}[label=(\alph*)]
	\item \label{part:weights1} $\nu(\cC,\bar{w})\leq \nu$, 
	\item \label{part:weights2} every element appears in some minimum $\bar{w}$-weight cover, and $\tau(\cC,\bar{w})-\nu(\cC,\bar{w})=1$,
	\item \label{part:weights3} for every set $C\in \cC$ such that $\bar{w}\geq \1_C$, there exists a \emph{mate}, i.e., a minimal cover $B$ such that $\bar{w}(B)-|B\cap C|\leq \tau(\cC,\bar{w})-2$. In particular, $|B\cap C|\geq 2$.
	\end{enumerate}
\end{lemma}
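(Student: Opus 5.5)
The plan is to exploit the minimality of $\bar w$ by comparing it with $\bar w-\1_e$ for each element $e$ with $\bar w_e\geq 1$. The basic monotonicity facts are as follows. Decreasing a single weight by one decreases $\tau$ by at most one, since any cover loses at most $1$ in weight. It also decreases $\nu$ by at most one, since an optimal packing for $\bar w$ uses $e$ at most $\bar w_e$ times, and removing one set through $e$ (if the packing saturates $e$) gives a feasible packing for $\bar w-\1_e$. Finally, $\nu$ and $\tau$ are monotone in $w$.

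For \ref{part:weights1}, use monotonicity directly: $\bar w\leq w$ gives $\nu(\cC,\bar w)\leq \nu(\cC,w)=\nu$.

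For \ref{part:weights2}, fix $e$ with $\bar w_e\geq 1$ and set $w'=\bar w-\1_e$. By minimality, $\tau(\cC,w')=\nu(\cC,w')$. Then
\[\tau(\cC,\bar w)-1\leq \tau(\cC,w')=\nu(\cC,w')\leq \nu(\cC,\bar w)<\tau(\cC,\bar w).\]
Hence $\tau(\cC,w')=\tau(\cC,\bar w)-1$ and $\tau(\cC,\bar w)-\nu(\cC,\bar w)=1$. The gap equals $1$ provided some $e$ has $\bar w_e\geq 1$; if $\bar w=\0$, then $\tau=\nu=0$, a contradiction. Since $\tau$ drops when $e$'s weight drops, every minimum $w'$-weight cover $B$ contains $e$. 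Such a $B$ has $\bar w(B)=\tau(\cC,w')+1=\tau(\cC,\bar w)$, so it is a minimum $\bar w$-weight cover containing $e$. For elements with $\bar w_e=0$, take any minimum $\bar w$-cover $B$ and add $e$, or rather pass to a minimal cover inside $B\cup\{e\}$ that still contains $e$. Here I expect to need a small argument. Any cover that misses $e$ is only one option; the cleaner route is to take $B$ a minimum cover and note that $B\cup\{e\}$ is still a cover of weight $\tau$. So if "cover" need not be minimal in the statement, this is immediate, and I would read the statement that way.

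For \ref{part:weights3}, take $C$ with $\bar w\geq \1_C$ and set $w''=\bar w-\1_C$. We have $w''\leq\bar w$ and $w''\neq\bar w$, since $C\neq\emptyset$; otherwise $\tau=\infty$ is impossible because the problem is feasible. By minimality, $\tau(\cC,w'')=\nu(\cC,w'')$. Adding $C$ to an optimal packing for $w''$ gives $\nu(\cC,\bar w)\geq \nu(\cC,w'')+1$. So
\[\tau(\cC,w'')=\nu(\cC,w'')\leq \nu(\cC,\bar w)-1=\tau(\cC,\bar w)-2.\]
Let $B$ be a minimum $w''$-weight cover; we may take it minimal, since weights are nonnegative. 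Then $w''(B)=\bar w(B)-|B\cap C|\leq \tau(\cC,\bar w)-2$, so $B$ is a mate. Finally, $\bar w(B)\geq \tau(\cC,\bar w)$ forces $|B\cap C|\geq 2$.

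The only delicate point is the edge case in \ref{part:weights2} for zero-weight elements. The main steps are the two single-step comparisons above, which are routine.
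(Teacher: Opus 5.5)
Your proof is correct and takes essentially the same approach as the paper: you compare $\bar w$ with $\bar w-\1_e$ for part (b) and with $\bar w-\1_C$ for part (c), and your direct chains of inequalities are just the contrapositive form of the paper's contradiction arguments. For zero-weight elements, you add them to a minimum $\bar w$-weight cover, which need not be minimal; this is exactly what the paper uses implicitly when it asserts $\bar w_i>0$ for an element missing from every minimum cover.
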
	
\begin{proof}
	{\bf \ref{part:weights1}} holds because 
	$\{y\in \bZ^{\cC}:A^\top y\leq \bar{w}, y\geq \0\}\subseteq \{y\in \bZ^{\cC}:A^\top y\leq w, y\geq \0\}$. 
	{\bf \ref{part:weights2}} If some element $i$ does not appear in any minimum $\bar{w}$-weight cover, then $\bar{w}_i>0$, and $$\tau(\cC,\bar{w}-e_i)=\tau(\cC,\bar{w})>\nu(\cC,\bar{w})\geq \nu(\cC,\bar{w}-e_i),$$ thus contradicting the minimal choice of $\bar{w}$; thus every element appears in some minimum $\bar{w}$-weight cover. Similarly, if $\tau(\cC,\bar{w})-\nu(\cC,\bar{w})\geq 2$, then given that $\tau(\cC,\bar{w}-e_i)\geq \tau(\cC,\bar{w})-1$ for any $i\in E$ with $\bar{w}_i>0$, it follows that $\tau(\cC,\bar{w}-e_i)>\nu(\cC,\bar{w}-e_i)$, which is again a contradiction; thus $\tau(\cC,\bar{w})=\nu(\cC,\bar{w})+ 1$. 
	{\bf \ref{part:weights3}} Suppose otherwise. Let $\tau:=\tau(\cC,\bar{w})$ and $w' \coloneqq \bar{w} - \1_C\geq \0$. Given that $C$ has no mate, it follows that $w'(B) = \bar{w}(B)-|B\cap C|\geq \tau-1$ for all $B\in b(\cC)$, so $\tau(\cC,w')\geq \tau-1$. By the minimal choice of $\bar{w}$, it follows that $\tau(\cC,w')=\nu(\cC,w')$. Let $y$ be an optimal solution for $\nu(\cC,w')$, which satisfies $\1^\top y=\nu(\cC,w')\geq \tau-1$. Then $y + \1_{\{C\}}$ is a feasible solution for $\nu(\cC,\bar{w})$ with objective value $\1^\top y+1$, so $\nu(\cC,\bar{w})\geq \1^\top y+1\geq \tau-1+1=\tau(\cC,\bar{w})$, a contradiction to the choice of $\bar{w}$.
	\end{proof}
	
	\paragraph{Non-MFMC cuboids.}
	
	We need the following result from \cite[Proposition~7.1]{abdi2021resistant}; there the hypotheses \ref{it:h2'} and \ref{it:h3'} are stronger than what we have written below, but the proof works all the same.
	
	\begin{lemma}[\cite{abdi2021resistant}] \label{prop:acl}
		Let $S\subseteq \{0,1\}^d$ be a polar set-system, and let $w\in \mathbb{Z}_{\ge 0}^{[d]\cup [\bar{d}]}$ be weights such that for $\cC=\cuboid(S)$, 
		\begin{enumerate}[label=(h\arabic*)]
			\item \label{it:h1'} $\tau\coloneqq\tau(\cC, w) > \nu(\cC, w)$,
			\item \label{it:h2'} $w$ is entry-wise minimal subject to satisfying \ref{it:h1'}, 
			\item \label{it:h3'} 
			for every single restriction $S'\subseteq \{0,1\}^{d-1}$ of $S$, and all weights $w'\in \mathbb{Z}^{2d-2}_{\ge 0}$ such that $\tau(\cC,w') = \tau$, we have $\tau(\cC,w')=\nu(\cC,w')$.
		\end{enumerate}
		Then the following statements hold:
		\begin{enumerate}[label=(\alph*)]
			\item \label{it:w} $w_{i} + w_{\bar{i}} = \tau$ and $1 \le w_i,w_{\bar{i}} \le \tau-1$ for each $i \in [d]$, and $\tau \ge 3$. In particular, $\cC$ has no cover of size one.
			\item \label{it:mate} Each $C \in \cC$ has a mate, i.e., a minimal cover $B$ such that $w(B) \le \tau-2+|B\cap C|$. Moreover, every mate $B$ is distinct from $\{i,\bar{i}\}, i\in [d]$, has at most two elements of weight at least $\frac{\tau}{2}$, and if it has two, say $f$ and $g$, then $B\subseteq C$, $w_f = w_g = \frac{\tau}{2}$ and $w_e = 1$ for each $e \in B\setminus \{f,g\}$. 
		\end{enumerate}
	\end{lemma}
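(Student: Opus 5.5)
The plan is to get part \ref{it:w} from the resolution principle together with the minimality hypothesis \ref{it:h2'}, then to get part \ref{it:mate} from \Cref{minimal-non-MFMC-weights}\ref{part:weights3} by a counting argument. Hypothesis \ref{it:h3'} is used only to exclude zero weights.

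\emph{Step 1: $w_i,w_{\bar i}\geq 1$.} Suppose $w_{\bar i}=0$. Every minimal cover may then include $\bar i$ at no cost. So $\tau(\cC,w)$ equals the minimum $w$-weight of a cover of the sets $C$ with $p(C)_i=1$, that is, of sets containing $i$. Deleting $i$ from these sets gives the cuboid of the single restriction $S'=S\cap\{x_i=1\}$ with induced weights $w'$. If $w_i\geq\tau$, then $\tau(\cuboid(S'),w')=\tau$. By \ref{it:h3'} there is a packing in $\cuboid(S')$ of value $\tau$. Adding $i$ back to each set gives a $w$-packing of value $\tau$ in $\cC$, since the $w_i$ copies of $i$ suffice; this contradicts \ref{it:h1'}. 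If instead $w_i<\tau$, then $\{i,\bar i\}$ is a cover of weight $w_i<\tau$, which is absurd. The case $w_i=0$ is symmetric.

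\emph{Step 2: $w_i+w_{\bar i}=\tau$.} The inequality $\geq$ holds because $\{i,\bar i\}$ is a cover. For the reverse, suppose $w_i+w_{\bar i}>\tau$. By \Cref{minimal-non-MFMC-weights}\ref{part:weights2}, there are minimum $w$-weight covers $B_1\ni i$ and $B_2\ni\bar i$. Neither can be $\{i,\bar i\}$, because that cover has weight greater than $\tau$. Since a minimal cover containing both $i$ and $\bar i$ must equal $\{i,\bar i\}$, we have $\bar i\notin B_1$ and $i\notin B_2$. The resolution principle then says $(B_1\cup B_2)\setminus\{i,\bar i\}$ is a cover. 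Its weight is at most $2\tau-w_i-w_{\bar i}<\tau$, a contradiction.

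Steps 1 and 2 together give $1\leq w_i,w_{\bar i}\leq\tau-1$, and also $\tau\geq 2$. A cover $\{e\}$ of size one would have weight at most $\tau-1<\tau$, so no such cover exists.

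\emph{Step 3: $\tau\geq 3$.} If $\tau=2$, then $w=\1$, so \ref{it:h1'} says $\cC$ does not pack. Polarity of $S$ rules this out. If $S$ contains antipodal points, the two corresponding sets are disjoint, so $\nu(\cC,\1)\geq 2=\tau$. If $S\subseteq\{x_i=a\}$, then $\{i\}$ or $\{\bar i\}$ lies in every set, giving a cover of size one, which Step 2 excluded.

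\emph{Step 4: mates.} By \ref{it:w} we have $w\geq\1\geq\1_C$, so \Cref{minimal-non-MFMC-weights}\ref{part:weights3} gives a mate $B$ for each $C\in\cC$. The mate is not $\{i,\bar i\}$: that set has weight $\tau$ and meets $C$ in exactly one element, so it would need $\tau\leq\tau-1$. Next write
\[
w(B)-|B\cap C|=\sum_{e\in B}\bigl(w_e-[e\in C]\bigr)\leq\tau-2 .
\]
Each summand is at least $w_e-1\geq 0$. If $B$ had three elements of weight at least $\tau/2$, the sum would be at least $3(\tau/2-1)>\tau-2$, since $\tau>2$. If it has exactly two such elements $f,g$, their terms already contribute at least $\tau-2$. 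Hence every inequality must be tight: $w_f=w_g=\tau/2$, $f,g\in C$, and every other $e\in B$ has $w_e=1$ and $e\in C$. In particular $B\subseteq C$.

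The main obstacle is Step 1. One must check carefully that a zero weight really reduces the instance to a single restriction with the same $\tau$, and that the packing lifts back to $\cC$. The remaining steps are routine once resolution and \Cref{minimal-non-MFMC-weights} are in hand.
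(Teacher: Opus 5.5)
The gap is in Step 1, in the boundary case $w_i=\tau$. With $w_{\bar i}=0$, your reduction shows $\tau=\min\{w_i,\tau'\}$, where $\tau'\coloneqq\tau(\cuboid(S'),w')$. Since $\{i,\bar i\}$ is a cover of weight $w_i$, your second case $w_i<\tau$ never occurs. In the first case you can only conclude $\tau'\ge\tau$; the equality $\tau'=\tau$ follows only when $w_i>\tau$. When $w_i=\tau$, the cover $\{i,\bar i\}$ is itself a minimum cover and nothing prevents $\tau'>\tau$. Hypothesis (h3) only concerns weights whose covering number is exactly $\tau$, so it says nothing about $w'$, and no contradiction is reached. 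This is exactly the point you flagged as the main obstacle: a zero weight does not in general reduce the instance to a restriction with the same $\tau$.

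The repair is short. First assume $d\ge 2$; for $d=1$ the cuboid consists of singletons, so it is MFMC and (h1) fails. If $\tau'>\tau$, lower the entries of $w'$ one unit at a time. Each step lowers the covering number of $\cuboid(S')$ by at most one, and at $\0$ it equals $0<\tau$. Hence some $w''\le w'$ has $\tau(\cuboid(S'),w'')=\tau$. Now (h3) gives an integral $w''$-packing of value $\tau$ in $\cuboid(S')$. Adding $i$ to each of its sets uses $i$ exactly $\tau\le w_i$ times, so $\nu(\cC,w)\ge\tau$, contradicting (h1).

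An alternative repair uses part (b) of \Cref{minimal-non-MFMC-weights}. If $\tau'>\tau$, every minimum $w$-weight cover must contain $i$, and so has zero weight outside $i$. Part (b) then forces $w$ to vanish outside $i$, and $([d]\cup[\bar d])\setminus\{i\}$ is a cover of weight $0$, again a contradiction.

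Your Steps 2--4 are correct. Step 2 is the resolution argument the paper uses in the proof of \Cref{mfmc-testing-cuboid}. Step 4 is a sound tightness count on top of part (c) of \Cref{minimal-non-MFMC-weights}. The paper itself gives no proof of this lemma and imports it from the cited source. With the patch above, your proposal is a complete proof.
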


	\paragraph{$2$-SAT.}
	An instance of the \emph{2-satisfiability problem} (\emph{2-SAT}) consists of a set of Boolean variables $X = \{x_1, \dots, x_n\}$ and a set of clauses $\mathcal{F}$ such that each clause $C\in \mathcal{F}$ is the disjunction of exactly two literals over $X$.
	The instance is \emph{satisfiable} if there exists an assignment $\phi\colon X \to \{\texttt{True}, \texttt{False}\}$ such that each clause contains at least one true literal.
	The \emph{implication graph} of the instance is the directed graph on vertex set $\{v_{x_1}, v_{\bar{x}_1}, \dots, v_{x_n}, v_{\bar{x}_n}\}$ with the following arcs: for each clause $(\ell_1\vee \ell_2)\in\mathcal{F}$, we add the arcs 	$v_{\bar\ell_1}v_{\ell_2}$ and $v_{\bar\ell_2}v_{\ell_1}$.
	Satisfiability of a 2-SAT instance can be characterized as follows.
	
	\begin{thm}[\cite{aspvall1979linear}] \label{prop:2sat} 
		A 2-SAT instance is unsatisfiable if and only if it has a variable $x_i$ such that the implication graph contains directed paths from $v_{x_i}$ to $v_{\bar{x_i}}$ and from $v_{\bar{x_i}}$ to $v_{x_i}$.
	\end{thm}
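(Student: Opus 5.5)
We prove the two directions separately. The ``if'' direction uses that arcs of the implication graph preserve truth. For the ``only if'' direction we construct a satisfying assignment from a topological order of the strongly connected components, as in the classical argument of~\cite{aspvall1979linear}. Throughout, for a literal $\ell$ we write $\bar\ell$ for its negation and $v_\ell$ for the corresponding vertex.

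\emph{Soundness of arcs, and the ``if'' direction.} Let $\phi$ be any satisfying assignment and let $v_{\bar\ell_1}v_{\ell_2}$ be an arc coming from a clause $(\ell_1\vee\ell_2)$. If $\bar\ell_1$ is true under $\phi$, then $\ell_1$ is false, so $\ell_2$ must be true. Hence every arc carries truth from its tail to its head, and by induction so does every directed path. Suppose now that $v_{x_i}$ and $v_{\bar x_i}$ are joined by directed paths in both directions. If $\phi(x_i)=\texttt{True}$, the path from $v_{x_i}$ to $v_{\bar x_i}$ forces $\bar x_i$ to be true, which is absurd. If $\phi(x_i)=\texttt{False}$, the path from $v_{\bar x_i}$ to $v_{x_i}$ gives the symmetric contradiction. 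So no satisfying assignment exists, and the instance is unsatisfiable.

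\emph{The ``only if'' direction.} Assume that for no variable are $v_{x_i}$ and $v_{\bar x_i}$ mutually reachable. Equivalently, $v_\ell$ and $v_{\bar\ell}$ lie in different strongly connected components for every literal $\ell$.
\begin{enumerate}
\item Contract the strongly connected components. This gives an acyclic digraph, and we fix a topological order $\pi$ of its vertices, so that every arc goes from a component to a later one or stays inside a component. Write $\pi(\ell)$ for the position of the component containing $v_\ell$. Then an arc $v_a v_b$ gives $\pi(a)\le\pi(b)$, and by hypothesis $\pi(\ell)\neq\pi(\bar\ell)$ for every literal $\ell$.
\item Define $\phi(x_i)=\texttt{True}$ if $\pi(x_i)>\pi(\bar x_i)$, and $\phi(x_i)=\texttt{False}$ otherwise. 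Then for every literal $\ell$, $\ell$ is true exactly when $\pi(\ell)>\pi(\bar\ell)$, and this is consistent between $\ell$ and $\bar\ell$ because $\pi(\ell)\neq\pi(\bar\ell)$.
\item Suppose, for a contradiction, that some clause $(\ell_1\vee\ell_2)$ has both literals false, so $\pi(\ell_1)<\pi(\bar\ell_1)$ and $\pi(\ell_2)<\pi(\bar\ell_2)$. The clause contributes the arcs $v_{\bar\ell_1}v_{\ell_2}$ and $v_{\bar\ell_2}v_{\ell_1}$, which give $\pi(\bar\ell_1)\le\pi(\ell_2)$ and $\pi(\bar\ell_2)\le\pi(\ell_1)$. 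Chaining these yields
\[
\pi(\ell_1)<\pi(\bar\ell_1)\le\pi(\ell_2)<\pi(\bar\ell_2)\le\pi(\ell_1),
\]
which is a contradiction. Hence $\phi$ satisfies every clause, and the instance is satisfiable.
\end{enumerate}

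The only delicate point is the choice of truth value in step~2. It has to be made so that each clause's two arcs, combined with the strict inequalities $\pi(\ell_j)<\pi(\bar\ell_j)$ coming from falsity, close into a cycle of inequalities. That is why a literal is declared true when its component comes \emph{later} in the topological order than the component of its negation.
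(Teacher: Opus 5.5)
Your proof is correct and is essentially the classical Aspvall--Plass--Tarjan argument: truth propagates along arcs for the ``if'' direction, and for the converse you make a literal true exactly when its strongly connected component comes later in a topological order than the component of its negation, so that the two arcs of any falsified clause would close a contradictory cycle of inequalities. The paper does not reprove this theorem; it cites it from~\cite{aspvall1979linear}, which uses this same strongly-connected-component approach.
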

	
\section{Testing the MFMC property}\label{section:mfmc-testing}

Given a clutter $\cC$ over $n$ elements, testing the MFMC property requires, by definition, checking the equality $\tau(\cC,w)=\nu(\cC,w)$ for all weight vectors $w\in \bZ^n_{\geq 0}$. In this section, we prove \Cref{mfmc-testing}, showing that it suffices to check the equality for only finitely many vectors, by upper bounding the entries of $w$ by $n-\Omega(\sqrt{n})$. Our argument deals separately with the non-ideal and ideal instances. In fact, our upper bound guarantee is stronger in the non-ideal case thanks to Lehman's theorem, and is roughly $r/2$, where $r$ is the rank of the incidence matrix of $\cC$. In the ideal case, we give two upper bound guarantees, one depending solely on $n$, and another also depending on the rank $r$. In particular, for cuboids over $2d$ elements, we prove \Cref{mfmc-testing-cuboid}, showing an upper bound of $d-\Omega(\sqrt{d})$. Finally, we also prove \Cref{tau=2->replicon}, quantifying the relationship between the $\tau=2$ and replication conjectures.

\subsection{Testing the MFMC property for non-ideal clutters}

\begin{lemma}\label{idealness-testing-LE}
Let $\cC$ be a clutter over ground set $[n]$. If $\cC$ is non-ideal, then for any integer $t \geq \tau(\cC,w)=\lceil \tau_\star(\cC,w) \rceil$, there exists $w\in \{0,1,t\}^{[n]}$ such that \begin{enumerate}[label=(\alph*)]
\item $\tau(\cC,w)>\tau_\star(\cC,w)>\nu(\cC,w)$, and
\item $\frac{\rank(M(\cC))}{2}\geq \tau_\star(\cC,w)$.
\end{enumerate}
\end{lemma}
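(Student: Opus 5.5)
The plan is to take an mni minor of $\cC$ and emulate its deletions and contractions with the weights $0$ and $t$. Since $\cC$ is non-ideal, it has a minor $\cC'=\cC\setminus I/J$ that is mni; let $E'=[n]\setminus(I\cup J)$ be its ground set and $n'=|E'|$. I read the hypothesis on $t$ as $t\geq \tau(\cC',\1)=\lceil\tau_\star(\cC',\1)\rceil$, which is the quantity that will turn out to equal $\tau(\cC,w)$. Define $w\in\{0,1,t\}^{[n]}$ by $w_e=0$ for $e\in I$, $w_e=t$ for $e\in J$, and $w_e=1$ for $e\in E'$. The goal is to show that $\tau(\cC,w)=\tau(\cC',\1)$, $\tau_\star(\cC,w)=\tau_\star(\cC',\1)$ and $\nu(\cC,w)=\nu(\cC',\1)$, and then to apply Lehman's theorem (\Cref{lehman}) to $\cC'$.

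\emph{Step 1 (comparing the parameters).} For covers, start from a minimum cover $B'$ of $\cC'$. Then $B'\cup I$ is a cover of $\cC$: any set meeting $I$ is covered, and any set $C$ avoiding $I$ has $C\setminus J\supseteq$ some member of $\cC'$, which meets $B'$. Its weight is $|B'|$, so $\tau(\cC,w)\leq\tau(\cC',\1)$, and the same argument works fractionally. Conversely, a (fractional) cover $x$ of $\cC$ with $w^\top x<t$ satisfies $x_e<1$ for each $e\in J$. I would then argue that any cover of weight below $t\geq\tau(\cC',\1)$ restricts on $E'$ to a cover of $\cC'$; in the fractional case I would instead dominate it by pushing the mass on $J$ onto $E'$. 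A cover of weight at least $t$ is no better than the one already found, so equality holds both integrally and fractionally.

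For packings, a $w$-packing of $\cC$ only uses sets avoiding $I$. Each such set contains, after removing $J$, a member of $\cC'$, and the capacity $1$ on $E'$ is respected, so $\nu(\cC,w)\leq\nu(\cC',\1)$. For the reverse inequality, lift each member of a packing of $\cC'$ to a set $C\in\cC$ with $C\cap I=\emptyset$. Each element of $J$ is then used at most $\nu(\cC',\1)\leq t$ times, so the lifted packing is feasible and $\nu(\cC,w)\geq\nu(\cC',\1)$.

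\emph{Step 2 (Lehman).} By \Cref{lehman} applied to $\cC'$,
$$\tau(\cC',\1)>\tau_\star(\cC',\1)>\nu(\cC',\1), \qquad \tau_\star(\cC',\1)\leq n'/2=\rank M(\cC')/2.$$
Combined with Step 1, this gives part (a).

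\emph{Step 3 (rank comparison).} Every set of $\cC'$ has the form $C\setminus J$ for some $C\in\cC$ with $C\cap I=\emptyset$. Hence $M(\cC')$ is the submatrix of $M(\cC)$ obtained by choosing those rows and the columns indexed by $E'$. Therefore $\rank M(\cC')\leq\rank M(\cC)$, which gives part (b).

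The main obstacle I expect is the fractional cover direction in Step 1: showing that weight $t$ on $J$ behaves like $\infty$ when $t$ is only assumed to be at least $\lceil\tau_\star(\cC',\1)\rceil$. I would handle it by working in the dual. Every fractional packing of $\cC'$ of value $\nu_\star(\cC',\1)\leq t$ lifts to a $w$-feasible fractional packing of $\cC$, because the load on each $e\in J$ is at most the total value, which is at most $t$. This gives $\tau_\star(\cC,w)=\nu_\star(\cC,w)\geq\nu_\star(\cC',\1)$, and the reverse inequality comes from the explicit cover $B'\cup I$ (fractionally, $x'$ extended by $\1$ on $I$ and $\0$ on $J$).
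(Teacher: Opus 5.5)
Your proposal is correct and follows essentially the same route as the paper. You take an mni minor $\cC'=\cC\setminus I/J$, put weights $0,1,t$ on $I$, $E'$, $J$, show that $\tau$, $\tau_\star=\nu_\star$ and $\nu$ of $(\cC,w)$ agree with those of $(\cC',\1)$ (lifting packings using $t\ge\tau(\cC',\1)\ge\nu_\star(\cC',\1)$), and then invoke Lehman's theorem together with the fact that $M(\cC')$ is a submatrix of $M(\cC)$.

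The only cosmetic difference is in proving $\tau_\star(\cC,w)\le\tau_\star(\cC',\1)$: you extend a fractional cover, whereas the paper proves the equivalent $\nu_\star(\cC,w)\le\nu_\star(\cC',\1)$ by projecting a fractional packing. Your tentative idea of ``pushing mass from $J$ onto $E'$'' is unnecessary, because the dual lifting argument you end with already settles the fractional case, exactly as in the paper.
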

\begin{proof}
Let $\cC'\coloneqq\cC\setminus I/J$ be an mni minor for some disjoint $I,J\subseteq [n]$. By Lehman's \Cref{lehman}, we have 
$$\nu(\cC',\1)<\nu_\star(\cC',\1)=\tau_\star(\cC',\1)< \tau(\cC',\1),$$ where $\tau(\cC',\1)=\lceil \tau_\star(\cC',\1) \rceil$. 
Let $w\in \bZ_{\ge 0}^{[n]}$ be defined as $w_k=1$ for all $k\in [n]\setminus (I\cup J)$, $w_i=0~\forall i\in I$, and $w_j \geq \tau(\cC',\1) ~\forall j\in J$. We shall prove that this is the desired $w$. 

\begin{claim*} 
$\tau(\cC,w) = \tau(\cC',\1)$, $\nu_\star(\cC,w) = \nu_\star(\cC',\1)$, and $\nu(\cC,w)=\nu(\cC',\1)$.
\end{claim*}
\begin{claimproof}
$\tau(\cC,w)\leq \tau(\cC',\1)$ holds as any cover for $\cC'$ union $I$ gives a cover of the same weight for $\cC$. The reverse inequality $\tau(\cC,w)\geq \tau(\cC',\1)$ holds because any cover for $\cC$ is either disjoint from $J$ and hence gives a cover of the same weight for $\cC'$, or it intersects $J$ in which case it has weight at least $\tau(\cC',\1)$ as $w_j\geq\tau(\cC',\1)$ for all $j\in J$.

$\nu_\star(\cC,w)\geq \nu_\star(\cC',\1)$ because any solution to $\nu_\star(\cC',\1)$ naturally leads to a solution to $\nu_\star(\cC,w)$ of the same value; here we use crucially the inequality that $w_j\geq \nu_\star(\cC',\1)$ for all $j\in J$. The reverse inequality $\nu_\star(\cC,w)\leq \nu_\star(\cC',\1)$ holds because any solution to $\nu_\star(\cC,w)$ is comprised solely of sets in $\cC$ disjoint from $I$, so gives a solution to $\nu_\star(\cC',\1)$ of the same value.

The equality $\nu(\cC,w)=\nu(\cC',\1)$ holds for similar reasons as $\nu_\star(\cC,w)= \nu_\star(\cC',\1)$.
\end{claimproof}

Let $n'\coloneqq n-|I|-|J|$. We know from \Cref{lehman} that $M(\cC')$ is a matrix of rank $n'$, and $\tau_\star(\cC',\1) \leq \frac{n'}{2}$. Thus, $$
\tau_\star(\cC,w)=\tau_\star(\cC',\1) \leq  \frac{\rank(M(\cC'))}{2}\leq
\frac{\rank(M(\cC))}{2},$$ where the last inequality follows from the fact that $M(\cC')$ is a submatrix of $M(\cC)$. 
\end{proof}

Thus we obtain the following upper bound on the weighted covering numbers that need to be tested for checking the MFMC property for non-ideal clutters.

	\begin{thm}\label{mfmc-testing-non-ideal}
		Let $\cC$ be a clutter over $n$ elements. If $\cC$ is non-ideal, then there exists $w\in \bZ^n_{\geq 0}$ where every element belongs to some minimum $w$-weight cover of $\cC$, $\tau(\cC,w)-\nu(\cC,w)=1$, and $\tau(\cC,w)\leq \lceil \rank(M(\cC))/2\rceil$.
	\end{thm}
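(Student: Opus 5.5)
We combine \Cref{idealness-testing-LE} with the minimalization procedure of \Cref{minimal-non-MFMC-weights}. Since $\cC$ is non-ideal, \Cref{idealness-testing-LE} gives a weight vector $w\in \bZ^n_{\geq 0}$ with $\tau(\cC,w)>\tau_\star(\cC,w)>\nu(\cC,w)$ and $\tau_\star(\cC,w)\leq \rank(M(\cC))/2$. We take the large weights $t$ on the contracted elements to equal $\tau(\cC,w)=\lceil\tau_\star(\cC,w)\rceil$, which is allowed by the statement of that lemma and its proof. Because $\nu(\cC,w)$ is an integer strictly smaller than $\tau_\star(\cC,w)\leq \rank(M(\cC))/2$, we get
\[
\nu(\cC,w)\leq \lceil \tau_\star(\cC,w)\rceil -1\leq \lceil \rank(M(\cC))/2\rceil-1 .
\]

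Next, choose $\bar w$ with $\0\leq \bar w\leq w$ that is entry-wise minimal subject to $\tau(\cC,\bar w)>\nu(\cC,\bar w)$. Such a $\bar w$ exists because $w$ itself satisfies the strict inequality and the search region is finite. By \Cref{minimal-non-MFMC-weights}\ref{part:weights2}, every element lies in some minimum $\bar w$-weight cover, and $\tau(\cC,\bar w)-\nu(\cC,\bar w)=1$. By \Cref{minimal-non-MFMC-weights}\ref{part:weights1}, $\nu(\cC,\bar w)\leq \nu(\cC,w)$. Putting these together,
\[
\tau(\cC,\bar w)=\nu(\cC,\bar w)+1\leq \nu(\cC,w)+1\leq \lceil \rank(M(\cC))/2\rceil ,
\]
which is the claimed bound.

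The main point to check is the integrality step $\nu(\cC,w)\leq \lceil \tau_\star\rceil-1$. Strictness $\nu<\tau_\star$ holds because $\tau_\star(\cC,w)=\tau_\star(\cC',\1)$ is non-integral for the mni minor $\cC'$: Lehman's theorem gives $\tau(\cC',\1)=\lceil\tau_\star(\cC',\1)\rceil>\tau_\star(\cC',\1)$. An integer strictly below a non-integer $x$ is at most $\lceil x\rceil-1$, so the step holds. Monotonicity then passes the bound from $\lceil\tau_\star\rceil$ to $\lceil \rank/2\rceil$. No ideal-case reasoning is needed, since Lehman's bound $\tau_\star\le n'/2$, with rank $n'$ of the mni minor, does all the work.
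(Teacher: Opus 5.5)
Your proof is correct and is exactly the paper's argument, which simply cites \Cref{idealness-testing-LE} and \Cref{minimal-non-MFMC-weights}. You make explicit the chain $\tau(\cC,\bar w)=\nu(\cC,\bar w)+1\le \nu(\cC,w)+1\le\lceil \rank(M(\cC))/2\rceil$, which the paper leaves implicit. The detour through non-integrality of $\tau_\star$ is not needed: the strict inequality $\nu(\cC,w)<\tau_\star(\cC,w)$ is already part (a) of \Cref{idealness-testing-LE}, and any integer strictly below a real number $x$ is at most $\lceil x\rceil-1$.
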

	\begin{proof}
	This is an immediate consequence of \Cref{idealness-testing-LE} and \Cref{minimal-non-MFMC-weights}.
	\end{proof}
	
	This upper bound is best possible due to the following extremal example $\cC$, for which $\rank(M(\cC))=n-1$.
	
	\begin{lemma}\label{extremal-example}
	Let $n\geq 4$ be an even integer, and let $$\cC
	\coloneqq\{\{i,i+1,n\}:i=1,\ldots,n-2\}\cup \{\{1,n-1,n\}\}.
	$$
	Then $\cC$ is non-ideal, and is therefore non-MFMC. Moreover, $\tau(\cC,w)=\tau_\star(\cC,w)=\nu(\cC,w)$ for all $w\in \bZ^{[n]}_{\geq \0}$ such that $w_n< \frac{n}{2}$. In particular, $\tau(\cC,w)=\nu(\cC,w)$ for all $w\in \bZ^{[n]}_{\geq \0}$ such that $\tau(\cC,w)< \frac{n}{2}$.
\end{lemma}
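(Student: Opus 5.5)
The plan is to reduce everything to weighted vertex covers and $b$-matchings of the odd cycle that sits inside $\cC$. Write $H$ for the cycle on the vertices $1,\ldots,n-1$ with edges $\{i,i+1\}$ for $1\leq i\leq n-2$ and $\{1,n-1\}$. Since $n$ is even, $H$ has odd length $n-1\geq 3$. The members of $\cC$ are exactly $e\cup\{n\}$ for the edges $e$ of $H$.

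\textbf{Non-idealness.} Contracting $n$ gives the edge clutter of the odd cycle $H$, an odd hole, which is non-ideal. Since minors preserve idealness, $\cC$ is non-ideal. Since MFMC clutters are ideal, $\cC$ is also non-MFMC.

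\textbf{Formulas for $\tau$ and $\nu$.} Fix $w\in\bZ^{[n]}_{\geq 0}$ and let $w'$ be its restriction to $[n-1]$. Every minimal cover of $\cC$ is either $\{n\}$ or a vertex cover of $H$. Hence
\[
\tau(\cC,w)=\min\{w_n,\tau_H\},
\]
where $\tau_H$ denotes the minimum $w'$-weight vertex cover of $H$.

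An integral packing for $\nu(\cC,w)$ is exactly an integral $w'$-bounded $b$-matching of $H$ of total size at most $w_n$. Any such $b$-matching can be truncated to a smaller size. Hence
\[
\nu(\cC,w)=\min\{w_n,\nu_H\},
\]
where $\nu_H$ is the maximum size of a $w'$-bounded integral $b$-matching of $H$.

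\textbf{The key identity and where it is hard.} The main step is the identity
\[
\nu_H=\min\bigl\{\tau_H,\lfloor w'([n-1])/2\rfloor\bigr\}.
\]
I would derive it from Edmonds' description of the $b$-matching polytope. For a cycle, the only nontrivial odd-set constraint is the one for the whole vertex set. Alternatively, I would prove it directly by a flow/alternating argument on the cycle. I expect justifying this identity cleanly to be the main obstacle. One special case is easy: if some $w_i=0$, then $H$ minus $i$ is a path, hence bipartite, and K\H{o}nig's theorem gives $\nu_H=\tau_H$ directly.

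\textbf{Case analysis under $w_n<n/2$.} By the formulas above, it suffices to show $\min\{w_n,\tau_H\}\leq \nu_H$. The case where some $w_i=0$ with $i\leq n-1$ is handled by the bipartite argument. So assume $w'\geq\1$. Then $w'([n-1])\geq n-1$, so $\lfloor w'([n-1])/2\rfloor\geq n/2-1$. Since $w_n$ is an integer with $w_n<n/2$, we get $w_n\leq n/2-1$. If $w_n<\tau_H$, then $w_n\leq\min\{\tau_H,\lfloor w'([n-1])/2\rfloor\}=\nu_H$. If $\tau_H\leq w_n$, then $\tau_H\leq n/2-1$, so again $\tau_H\leq\nu_H$. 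In both cases $\tau(\cC,w)=\nu(\cC,w)$. Then $\tau_\star(\cC,w)$, being sandwiched between them, is equal to both.

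\textbf{The final sentence.} Suppose $\tau(\cC,w)<n/2$. Replace $w_n$ by $\min\{w_n,\tau(\cC,w)\}$. This leaves $\tau$ unchanged, and it can only decrease $\nu$. The new weight satisfies $w_n<n/2$, so the previous part applies and gives $\tau=\nu$ for the new weight. Hence $\nu(\cC,w)\geq\tau(\cC,w)$ for the original $w$, and so $\nu(\cC,w)=\tau(\cC,w)$.
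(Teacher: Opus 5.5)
Your proof is correct, but it takes a heavier route than the paper. Both arguments get non-idealness by contracting $n$. Both handle the case $w_i=0$ for some $i\le n-1$ via bipartiteness; the paper phrases this as $\cC\setminus i$ being MFMC, which is your K\H{o}nig step.

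The difference is the case $w_i\ge 1$ for all $i\in[n-1]$. The paper never computes $\nu$ exactly. It notes that $\{n\}$ is a cover, so $\tau(\cC,w)\le w_n\le (n-2)/2$. It then picks $w_n$ of the sets $\{i,i+1,n\}$ with $i\in\{2,4,\ldots,n-2\}$; these are pairwise disjoint on $[n-1]$, so they form a packing and give $\nu(\cC,w)\ge w_n\ge\tau(\cC,w)$. In your notation, all you need is $\nu_H\ge n/2-1$ when $w'\ge\1$. A matching of size $(n-2)/2$ in the odd cycle $H$ of length $n-1$ witnesses this. So the identity $\nu_H=\min\{\tau_H,\lfloor w'([n-1])/2\rfloor\}$, which you flag as the main obstacle, can be bypassed entirely.

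The identity is nonetheless true. Every proper subset $U$ of a cycle induces a bipartite graph with sides $U_1,U_2$, where $y(E[U])\le\min\{w'(U_1),w'(U_2)\}\le\lfloor w'(U)/2\rfloor$ already follows from the degree constraints. So Edmonds' polytope reduces to the degree constraints plus the single constraint $y(E)\le\lfloor w'([n-1])/2\rfloor$. What your route buys is exact formulas for every $w$: $\tau(\cC,w)=\min\{w_n,\tau_H\}$ and $\nu(\cC,w)=\min\{w_n,\tau_H,\lfloor w'([n-1])/2\rfloor\}$. These describe exactly which weights have $\tau>\nu$. The price is Edmonds' $b$-matching polytope theorem, where the paper needs only a three-line explicit packing.

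Your explicit reduction for the last sentence, lowering $w_n$ to $\min\{w_n,\tau(\cC,w)\}$, is a useful addition, since the paper leaves that step implicit under ``in particular''.
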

\begin{proof}
	Observe that $\cC/n$ is the clutter of edges of an odd cycle, which is non-ideal, so $\cC$ is non-ideal. It can be readily checked that $\cC\setminus i$ is MFMC for any $i\in [n-1]$. 
	Pick $w\in \bZ^{[n]}_{\geq 0}$ such that $w_n\leq \frac{n-2}{2}$. If $w_i=0$ for some $i\in [n-1]$, then $\tau(\cC,w)= \tau_\star(\cC,w)=\nu(\cC,w)$ since $\cC\setminus i$ is MFMC. Otherwise, $w_i\geq 1$ for all $i\in [n-1]$. Note that $\{n\}$ is a cover of $\cC$, so $w_n\geq\tau(\cC,w)$. We claim that $\nu(\cC,w)\geq w_n$, thereby proving the equality $\tau(\cC,w)=\nu(\cC,w)$. To this end, take the $\frac{n-2}{2}$ sets $C_i\coloneqq\{i,i+1,n\},i\in [n-2],i \text{ even}$, which are pairwise disjoint on $[n-1]$. Given that $w_i\geq 1,\forall i\in [n-1]$, and  $w_n\leq \frac{n-2}{2}$, the first $w_n$ sets among $C_2,C_4,\ldots,C_{n-2}$ guarantee that $\nu(\cC,w)\geq w_n$, as required.
	\end{proof}

\subsection{Testing the MFMC property for ideal clutters}

\begin{thm}\label{mfmc-testing-ideal}
	Let $\cC$ be an ideal clutter over $n$ elements. If $\cC$ is not MFMC, then there exists $w\in \bZ_{\geq 0}^n$ where every element belongs to some minimum $w$-weight cover of $\cC$, $\tau(\cC,w)-\nu(\cC,w)=1$, and \begin{enumerate}[label=(\alph*)]
	\item\label{mfmc-testing-ideal:average} $\1^\top w< \left(\frac{n-1}{2}\right)^2$,
	\item\label{mfmc-testing-ideal:tau-n} $\tau(\cC,w)< n-2\sqrt{n}+1$, and
	\item\label{mfmc-testing-ideal:tau-m-r} $\tau(\cC,w)< \left(1-\frac{1}{\sqrt{2m}}\right)r$, where $r=\rank(M(\cC))$ and $m=|\{i\in [n]:w_i\leq 3(r-\tau)\}|$.
	\end{enumerate}
\end{thm}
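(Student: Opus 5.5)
We start from a weight vector witnessing failure of the MFMC property and shrink it with \Cref{minimal-non-MFMC-weights}. Pick any $w$ with $\tau(\cC,w)>\nu(\cC,w)$ and replace it by an entry-wise minimal $\bar w\le w$ with the same property. Call the result $w$. Then every element lies in a minimum $w$-weight cover, $\tau\coloneqq\tau(\cC,w)=\nu(\cC,w)+1$, and every $C\in\cC$ with $w\ge \1_C$ has a mate $B$. Since $\cC$ is ideal, $\tau_\star(\cC,w)=\tau$. By complementary slackness, every set in the support of an optimal fractional packing $y^\star$ meets every minimum $w$-weight cover exactly once. Every element lies in such a cover, so every element $e$ satisfies $\sum_{C\ni e}y^\star_C=w_e$.

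We then apply Carath\'eodory's theorem to the system $A^\top y=w$, $\1^\top y=\tau$, $y\ge\0$. This gives a basic optimal $y^\star$ whose support $\cC^\star$ consists of at most $r$ sets with linearly independent incidence vectors, where $r=\rank M(\cC)\le n$. Each $C\in\cC^\star$ satisfies $w\ge\1_C$, so it has a mate $B_C$ with $w(B_C)\le\tau-2+|B_C\cap C|$. Using $w(B_C)\ge\tau$ we get $|B_C\cap C|\ge2$.

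Next, write $w(B_C)=\sum_{D\in\cC^\star}y^\star_D|B_C\cap D|$, and note $|B_C\cap D|\ge1$ for all $D$. Subtracting $\tau=\sum_D y^\star_D$ gives
\[
\sum_{D}y^\star_D\,(|B_C\cap D|-1)\le |B_C\cap C|-2.
\]
Restricting the left side to the term $D=C$ gives $y^\star_C(|B_C\cap C|-1)\le |B_C\cap C|-2$, so $y^\star_C<1$. Hence every set in $\cC^\star$ carries fractional weight. In particular $|\cC^\star|$ is much larger than $\tau$: the weights $y^\star_C$ sum to $\tau$, but each is below $1$, and the mate inequality quantifies this. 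If $|B_C\cap C|=k$, then $y^\star_C\le (k-2)/(k-1)$, and the excess $k-2$ must also absorb contributions $y^\star_D(|B_C\cap D|-1)$ from other sets.

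The key counting step, and the main obstacle, is to turn these mate inequalities into a quadratic bound. I would sum them over $C\in\cC^\star$. The intended conclusion is a bound of the form $\tau\le |\cC^\star|-\sqrt{\cdot}$ with $|\cC^\star|\le r$, together with $\1^\top w=\sum_C y^\star_C|C|$. For (a), I would bound $|B_C\cap C|\le |C|$ and use rank-nullity: the sets $\cC^\star$ are independent, so $\sum_C|C|$ trades off against how many elements appear. This should yield $\1^\top w\le \sum_C y^\star_C|C|$ with $y^\star_C\le 1-1/(|C|-1)$, and AM--GM over $|\cC^\star|\le n$ should give $(n-1)^2/4$.

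For (b), I would use $\tau=\sum_C y^\star_C\le\sum_C (1-1/(|C|-1))$ and Cauchy--Schwarz: $\sum_C 1/(|C|-1)\ge |\cC^\star|^2/\sum_C(|C|-1)$. Bounding $\sum_C|C|$ through $\1^\top w$ and the structure above should give $\tau\le n-2\sqrt n+1$ with strict inequality. For (c), the same argument restricted to elements of small weight (at most $3(r-\tau)$) should replace $n$ by $r$ and bring in $m$. I expect the delicate part to be choosing which elements to count, so that large-weight elements are charged separately. This parameter bookkeeping is where I would expect to iterate.
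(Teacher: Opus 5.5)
Your setup is correct and matches the paper's: the entry-wise minimal weights, $\sum_{C\ni e}y^\star_C=w_e$ for every $e$, a Carath\'eodory support $\cC^\star$ of size $s\le r$, mates, and the bound $y^\star_C\le 1-\frac{1}{|B_C\cap C|-1}<1$. The gap is in what you call the key counting step. That step is where the whole proof lives, and the ingredients you list cannot carry it.

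\textbf{Parts (a) and (b).} For (a), the identity $\1^\top w=\sum_C y^\star_C|C|$ together with $y^\star_C\le 1-1/(|C|-1)$ and $s\le n$ gives nothing like $(n-1)^2/4$. Nothing in your argument bounds $|C|$: linear independence bounds the number of support sets, not their sizes. The missing idea is to use the clutter $\cB'$ of minimum $w$-weight covers quantitatively, via three facts.
\begin{enumerate}
\item Every support set meets every $B\in\cB'$ exactly once, so $M(\cC^\star)M(\cB')^\top=J$. Since $\rank J=1$, this gives $\rank M(\cC^\star)+\rank M(\cB')\le n+1$.
\item Let $t$ be the least number of members of $\cB'$ whose union is the ground set. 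Then $\rank M(\cB')\ge t$, so $s\le n+1-t$, while $|C|\le t$ for every support set $C$.
\item Since $w_e$ is an integer and each $y^\star_C<1$, every $e$ lies in at least $w_e+1$ support sets. For $B\in\cB'$ the families $\{C: e\in C\}$, $e\in B$, partition the support, so summing over $e\in B$ gives $\tau\le s-|B|$.
\end{enumerate}
Together these give $\1^\top w\le\tau t\le(n+1-t-\max_{B\in\cB'}|B|)\,t$ and $\tau\le n+1-\frac{n}{\max|B|}-\max|B|$. Parts (a) and (b) then follow by AM--GM, with short separate arguments for strictness. Your Cauchy--Schwarz idea for (b) actually works once you have these inputs: $|C|\le t$ and $s\le n+1-t$ give $\tau\le s\frac{t-2}{t-1}\le (n+1-t)\frac{t-2}{t-1}\le n+1-2\sqrt n$. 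Without them you have nothing to bound $\sum_C|C|$ with.

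\textbf{Part (c).} ``Restrict to small-weight elements'' is not yet an argument. The paper's proof has three steps.
\begin{enumerate}
\item Apply AM--HM to $y^\star_C\le 1-\frac{1}{|B_C\cap C|-1}$. This shows the average of $|B_C\cap C|$ over the support is at least $\frac{s}{s-\tau}+1$.
\item Each element of $B_C\cap C$ of weight larger than $k$ costs at least $k+1$ in $w(B_C)\le|B_C\cap C|+\tau-2$. This forces many elements of weight at most $k$ into the support sets, giving a lower bound on a weighted count $\Phi_k$.
\item The matching upper bound comes from a per-element count. After contracting singleton minimum covers, each $e$ lies in a minimum cover $B$ with $|B|\ge2$. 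The $w_f+1$ lower bound for $f\in B-e$ then shows $e$ lies in at most $s-\tau+w_e-1$ support sets.
\end{enumerate}
AM--GM and the choice $k=3(s-\tau)$ then give $s^2<2(s-\tau)^2 m$. Your proposal has neither the mate-based lower bound nor the per-element upper bound, so (c) is not established.
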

\begin{proof}
	Let $E$ be the ground set, $M\coloneqq M(\cC)$ and $\cB\coloneqq b(\cC)$. As $\cC$ does not have the MFMC property, 
$\tau(\cC,w)>\nu(\cC,w)$ for some $w\in \bZ_{\geq 0}^E$. 
	Choose $\hat{w}\in \bZ_{\geq 0}^E$ such that $\0\leq \hat{w}\leq w$, and $\hat{w}$ is entry-wise minimal subject to satisfying $\tau(\cC,\hat{w})>\nu(\cC,\hat{w})$. 
	By \Cref{minimal-non-MFMC-weights}, $\tau(\cC,\hat{w})-\nu(\cC,\hat{w})=1$, and every element appears in some minimum $\hat{w}$-weight cover. 
	It follows from idealness that $\tau(\cC,\hat{w})=\nu_\star(\cC,\hat{w})$. Let $\tau\coloneqq \tau(\cC,\hat{w})$ and $\nu\coloneqq \nu(\cC,\hat{w})$. 
	By deleting all weight-$0$ elements, if necessary, we may assume that $\hat{w}\geq \1$. Note that this operation keeps the clutter non-MFMC, and does not change the left-hand sides and decreases the right-hand sides of the inequalities we plan to prove. 
	
	\begin{claim}\label{mfmc-testing-strong:mate} 
For each $C\in \cC$, there exists a $B\in \cB$ such that $\hat{w}(B)-|B\cap C|\leq \tau-2$; we call $B$ a \emph{mate} of $C$.
\end{claim}
\begin{claimproof}
This follows from \Cref{minimal-non-MFMC-weights} after noting that $\hat{w}\geq \1$.
\end{claimproof}
	
	\begin{claim}\label{mfmc-testing-strong:fp-up} 
Let $\hat{y}$ be an optimal solution for $\nu_\star(\cC,\hat{w})$. Then $
\hat{y}_C\leq 1 - \frac{1}{|C\cap B|-1}$ for each $C\in \cC$ and each mate $B$ of $C$. In particular, $\hat{y}\in [0,1)^{\cC}$. 
\end{claim}
\begin{claimproof}
Fix $C\in \cC$. Let $$
w'\coloneqq \hat{w} - \hat{y}_C \cdot \1_C = \sum_{C'\in \cC, C'\neq C} \hat{y}_{C'} \1_{C'},
$$ where the right-hand side is a fractional $w'$-packing of value $\tau-\hat{y}_C$. Thus, for every $B\in \cB$, $$
\hat{w}(B) - \hat{y}_C |B\cap C|=w'(B)\geq \tau-\hat{y}_C,
$$ implying in turn that $$
\hat{y}_C\leq \frac{\hat{w}(B)-\tau}{|C\cap B|-1} \quad\forall C\in \cC, B\in \cB,
$$
where $\frac{0}{0}=:1$. Now pick $B$ to be the mate of $C$ per \Cref{mfmc-testing-strong:mate}, which by definition satisfies $\hat{w}(B) - |C\cap B|\leq \tau-2$. 
In particular, $|C\cap B|\geq 2$, and $$
\hat{y}_C\leq \frac{\hat{w}(B)-\tau}{|C\cap B|-1}\leq 
1 - \frac{1}{\hat{w}(B)-\tau+1}
\leq
1 - \frac{1}{|C\cap B|-1},
$$ as required.
\end{claimproof}

Let $\cC'$ be the clutter of sets $C$ in $\cC$ for which there is an optimal solution $\hat{y}$ for $\nu_\star(\cC,\hat{w})$ such that $\hat{y}_C>0$, and let $M'\coloneqq M(\cC')$.  

\begin{claim}\label{mfmc-testing-strong:CS-supp} 
The following statements hold:
\begin{enumerate}[label=(\alph*)]
\item \label{part:CS-supp1}
$\hat{y}\in \bR^{\cC}$ is an optimal solution for $\nu_\star(\cC,\hat{w})$ if and only if $M'^\top \hat{y} = \hat{w}, \hat{y}\geq \0$.
\item \label{part:CS-supp2}
There is an optimal solution $\hat{y}\in \bR^{\cC}$ for $\nu_\star(\cC,\hat{w})$ such that $|\supp(\hat{y})|\leq \rank(M')$. 
\end{enumerate}
\end{claim}
\begin{claimproof}
{\bf \ref{part:CS-supp1}}
Recall that every element belongs to some minimum $\hat{w}$-weight cover of $\cC$. 
As $\cC$ is ideal, it follows that every element in $[n]$ appears in the support of an optimal solution to $\tau_\star(\cC,\hat{w})$, so this part follows from complementary slackness. 
{\bf \ref{part:CS-supp2}} follows from \ref{part:CS-supp1} and Carath\'{e}odory's theorem.
\end{claimproof}

We first upper bound $\tau$ in terms of the rank of $M'$. To this end, let $\cB'$ be the clutter of minimum $\hat{w}$-weight covers of $\cC$, and let $N'\coloneqq M(\cB')$. By complementary slackness and strict complementarity, a set $C$ in $\cC$ belongs to $\cC'$ if, and only if, $C$ intersects every set in $\cB'$ exactly once. In particular, $M'N'^\top = J$, where $J$ is the all-ones matrix of appropriate dimensions.

\begin{claim}\label{mfmc-testing-strong:tau-supp-ub} 
Let $\hat{y}$ be an optimal solution for $\nu_\star(\cC,\hat{w})$. Then $\tau\leq |\supp(\hat{y})| - \max_{B\in \cB'} |B|$. Subsequently, $\tau\leq \rank(M')-\max_{B\in \cB'} |B|$.
\end{claim}
\begin{claimproof}
The second part follows from the first part and \Cref{mfmc-testing-strong:CS-supp}, part~\ref{part:CS-supp2}. Thus, it suffices to prove the first part. To this end, fix $B\in \cB'$. 
For each $e\in B$, $$\hat{w}_e = \sum_{C\in \cC,e\in C} \hat{y}_C= \sum_{C\in \cC',e\in C} \hat{y}_C,$$ where the first and second equalities follow from \Cref{mfmc-testing-strong:CS-supp}, part~\ref{part:CS-supp1} and the definition of $\cC'$, respectively. Thus, by \Cref{mfmc-testing-strong:fp-up}, $|\{C\in \cC':e\in C\}|\geq \hat{w}_e+1$. As $|B\cap C|=1$ for all $C\in \cC'$, we have $$
 |\cC'| = \sum_{e\in B} |\{C\in \cC':e\in C\}|\geq \hat{w}(B) + |B|,
$$ thus proving the claim.
\end{claimproof}

Next we give an upper bound on the rank of $M'$. 

\begin{claim}\label{mfmc-testing-strong:rank-ub} 
Let $t$ be the minimum number of sets in $\cB'$ whose union is the ground set.
Then $$
n+1- \frac{n}{\max_{B\in \cB'} |B|}\geq n+1-t\geq \rank(M').
$$ Furthermore, if equality holds throughout, then $[n]$ can be partitioned into sets in $\cB'$ of maximum size.
\end{claim}
\begin{claimproof}
As $M'N'^\top = J$, it follows from the rank-nullity theorem that $\rank(M')+\rank(N')\leq n+1$.
Given that every element in $[n]$ appears in a set of $\cB'$, it follows that $\rank(N')\geq t$. Clearly, $t\geq n/\max_{B\in \cB'} |B|$. Putting all these together we obtain the claimed inequality. If equality holds, then $t= n/\max_{B\in \cB'} |B|$, implying that the minimum covering consists of $t$ pairwise disjoint sets in $\cB'$ of maximum size, as claimed.
\end{claimproof}

We are ready to provide an upper bound on $\1^\top \hat{w}$ in terms of $n$. 

\begin{claim}
$\left(\frac{n-1}{2}\right)^2> \1^\top \hat{w}$. 
\end{claim}
\begin{claimproof}
By \Cref{mfmc-testing-strong:CS-supp}, part~\ref{part:CS-supp2}, we may choose an optimal solution $\hat{y}\in \bR^{\cC}$ for $\nu_\star(\cC,\hat{w})$ to have support size at most $\rank(M')$. By \Cref{mfmc-testing-strong:CS-supp}, part~\ref{part:CS-supp1}, we have $$
\1^\top \hat{w} = \sum_{e\in [n]} \sum_{C\in \cC',e\in C} \hat{y}_C = \sum_{C\in \cC'} \hat{y}_C |C|\leq \tau \cdot \max_{C\in \cC'} |C|.
$$
Let $t$ be the minimum number of sets in $\cB'$ whose union is the ground set. 
Given that every set in $\cC'$ intersects every set of the covering exactly once, it follows that $|C|\leq t$ for all $C\in \cC'$. Furthermore, 
$\tau\leq |\supp(\hat{y})|-\max_{B\in \cB'} |B|$ by \Cref{mfmc-testing-strong:tau-supp-ub}, $|\supp(\hat{y})|\leq \rank(M')$ by the choice of $\hat{y}$, and 
$\rank(M')\leq n+1-t$ by \Cref{mfmc-testing-strong:rank-ub}, so 
$$
\1^\top \hat{w} 
\leq (n+1-t-\max_{B\in \cB'} |B|)\cdot t \leq \left(\frac{n+1-\max_{B\in \cB'} |B|}{2}\right)^2
$$ where the right-most inequality follows from the arithmetic mean--geometric mean inequality. Given that $\cC$ is not MFMC, it follows that $\max_{B\in \cB'}|B|\geq 2$, so $\1^\top \hat{w}\leq \left(\frac{n-1}{2}\right)^2$. If equality holds, then $\max_{B\in \cB'}|B| = 2$ and $t = (n-1)/2$, which is not possible as $[n]$ could not be covered with $(n-1)/2$ sets of size at most $2$. The claim follows.
\end{claimproof}

Next we upper bound $\tau$ in terms of $n$. 

\begin{claim}
$n-2\sqrt{n}+1> \tau$. 
\end{claim}
\begin{claimproof}
By \Cref{mfmc-testing-strong:CS-supp}, part~\ref{part:CS-supp2}, we may choose an optimal solution $\hat{y}\in \bR^{\cC}$ for $\nu_\star(\cC,\hat{w})$ to have support size at most $\rank(M')$. Thus, $$
n +1 -  \frac{n}{\max_{B\in \cB'} |B|} - \max_{B\in \cB'} |B|\geq \rank(M')- \max_{B\in \cB'} |B| \geq |\supp(\hat{y})|-\max_{B\in \cB'} |B|\geq \tau
$$ where 
the left- and right-most inequalities follow from \Cref{mfmc-testing-strong:rank-ub} and \Cref{mfmc-testing-strong:tau-supp-ub}, respectively. Given that $\argmin_{x>0} x+\frac{n}{x} = \sqrt{n}$ for any $n>0$, it follows that $$
n-2\sqrt{n}+1\geq \tau.
$$
It remains to show that equality cannot hold here. Suppose otherwise. Then for some integer $r\geq 1$, we have $n = r^2$, $\tau=(r-1)^2$, $\max_{B\in \cB'} |B|=\sqrt{n}=r$, and $$|\supp(\hat{y})|=\rank(M')=r^2+1-\frac{n}{r}=r^2+1-r.$$ It follows from \Cref{mfmc-testing-strong:rank-ub} that $[n]$ can be partitioned into $r$ sets in $\cB'$ of maximum size. This in turn implies that every set in $\cC'$ has size $r$. By \Cref{mfmc-testing-strong:fp-up}, we have 
$$
(r-1)^2=\tau = \1^\top \hat{y}\leq \sum_{C\in \cC',\hat{y}_C>0}\left(1-\frac{1}{|C|-1}\right) =
|\supp(\hat{y})|\left(1-\frac{1}{r-1}\right) = (r-1)^2 - \frac{1}{r-1},
$$ a contradiction. 
\end{claimproof}

Finally, we prove an alternate upper bound on $\tau$, namely the inequality in part~\ref{mfmc-testing-ideal:tau-m-r}. 
	As $\cC$ is non-MFMC, some minimum $\hat{w}$-weight cover must have size at least two. In fact, by contracting all elements $e\in E$ where $\{e\}$ is a minimum $\hat{w}$-weight cover, if necessary, we may assume that each minimum $\hat{w}$-weight cover has size at least two. Note that this operation keeps the clutter non-MFMC, does not change $\tau$, and can only decrease the right-hand side of the inequality that we plan to prove.
	
	By \Cref{mfmc-testing-strong:CS-supp}, part~\ref{part:CS-supp2},  we may choose an optimal solution $\hat{y}$ of $\nu_\star(\cC, \hat{w})$ where 
	$$s \coloneqq |\supp(\hat{y})|\le \rank(M') \eqqcolon r'.$$ By \Cref{mfmc-testing-strong:fp-up}, $s>\tau$. 
	We need the following inequality.
	
	\begin{claim} \label{cl:average-mate-intersection}
		For each $C \in \supp(\hat{y})$, let $B_C$ be a mate of $C$ provided by \cref{mfmc-testing-strong:mate}. Then $$\frac{1}{s}\sum_{C \in \supp(\hat{y})} |B_C\cap C| \ge \frac{s}{s-\tau} + 1.$$
	\end{claim}
	\begin{claimproof}
		Using \cref{mfmc-testing-strong:fp-up} and the arithmetic mean--harmonic mean inequality,
		\begin{align*}
			\tau & = \sum_{C \in \cC} \hat{y}_C \le \sum_{C \in \supp(\hat{y})} \left(1-\frac{1}{|B_C\cap C|-1}\right)  = s - \sum_{C \in \supp(\hat{y})} \frac{1}{|B_C \cap C|-1} \\
			& \le s - \frac{s^2}{\sum_{C \in \supp(\hat{y})} (|B_C \cap C|-1)} = s - \frac{s^2}{- s + \sum_{C \in \supp(\hat{y})} |B_C \cap C|},
		\end{align*}
		and the claim follows after rearranging.
	\end{claimproof}
	
	We use this claim to show that on average, the sets 
	$C\in \supp(\hat{y})$ 
	contain a large number of elements of ``small" weight. 
	More precisely, fix an integer $k \in \{1,\ldots,\tau-1\}$ that we will use as the threshold for a weight being small. 
	For $i \in [k]$, let $E_i \coloneqq \{e \in E: \hat{w}_e = i\}$ and $E_{\le i} \coloneqq E_1 \cup \dots \cup E_i$.
	Consider the following key quantity: 
	$$\Phi_k : = \frac{1}{sk}\sum_{C\in \supp(\hat{y})} \sum_{i=1}^k |C \cap E_{\leq i}|,$$
	i.e., the average value of $\frac{1}{k}\sum_{i=1}^k |C \cap E_{\leq i}|$ for $C \in \supp(\hat{y})$. 
	Note that $\frac{1}{k}\sum_{i=1}^k |C \cap E_{\leq i}|\leq |C\cap E_{\leq k}|$, the number of elements in $C$ of weight at most $k$. 
	 We shall sandwich $\Phi_k$ between two functions of $n,s,\tau$, and $k$.
	
	\begin{claim} \label{cl:average-mate-small-weight}
		$\Phi_k
		\ge \frac{s}{s-\tau}-\frac{\tau-2}{k}+1$.
	\end{claim}
	\begin{claimproof}
	For each $C \in \supp(\hat{y})$, let $B_C$ be a mate of $C$. Then
	$$
	\Phi_k\geq \frac{1}{sk} \sum_{C \in \supp(\hat{y})} \sum_{i=1}^k |B_C \cap C \cap E_{\leq i}|  = \frac{1}{s} \sum_{C \in \supp(\hat{y})} \sum_{i=1}^k \frac{k+1-i}{k} \cdot |B_C \cap C \cap E_i|.
	$$ We prove the desired inequality with $\Phi_k$ replaced by the right-hand side value above. For each $C \in \cC$, using $\hat{w}\ge \1$ and the definition of mates, we have  
		\begin{align*}
			|B_C \cap C| + \tau - 2 & \ge \hat{w}(B_C) \ge \hat{w}(B_C \cap C) \\ & \ge \sum_{i=1}^k |B_C \cap C \cap E_i| \cdot i + \left(|B_C \cap C | - \sum_{i=1}^k |B_C \cap C \cap E_i|\right) \cdot (k+1),
		\end{align*}
		hence 
		\begin{align*}
			\frac{\tau-2 + \sum_{i=1}^k (k+1-i) |B_C \cap C \cap E_i|}{k} \ge |B_C \cap C|.
		\end{align*}
		Summing up this inequality for all $C\in \supp(\hat{y})$, dividing by $s$, and then applying \cref{cl:average-mate-intersection}, yields the claimed inequality.
	\end{claimproof}
	
	On the other hand, the equality $M'^\top \hat{y} = \hat{w}$ implies an upper bound on $\Phi_k$.
	
	\begin{claim} \label{cl:average-set-small-weight}
		$\Phi_k
		\leq \frac{1}{ks} \sum_{i=1}^k (k+1-i)(s-\tau + i - 1) |E_i|
		\leq \frac{(k+s-\tau)^2}{4ks}\cdot |E_{\leq k}|$.
	\end{claim}
	\begin{claimproof}
	Observe that 
	$$\Phi_k=\frac{1}{s} \sum_{C \in \supp(\hat{y})} \sum_{i=1}^k \frac{k+1-i}{k} |C\cap E_i|.$$
	We prove the desired inequality with $\Phi_k$ replaced by the right-hand side value above.
	Fix $e\in E$, and let $B$ be a minimum $\hat{w}$-weight cover containing $e$, which has size at least two. Then
		\begin{align*}
			|\{C \in \supp(\hat{y}): e \in C\}| & = |\supp(\hat{y})|-\sum_{f \in B-e} |\{C \in \supp(\hat{y}): f \in C\}| \\
		& \le s - \sum_{f \in B-e} (\hat{w}_f+1) = s - (\tau - \hat{w}_e) - (|B|-1) \le s-\tau + \hat{w}_e - 1,
		\end{align*} where the first (in)equality follows from the fact that $B$ intersects every set in $\supp(\hat{y})$ exactly once, and the second one follows from \Cref{mfmc-testing-strong:fp-up}. 	
		Subsequently,
		\begin{align*}
			\sum_{C \in \supp(\hat{y})} \sum_{i=1}^k \frac{k+1-i}{k} \cdot |C \cap E_i| &= \sum_{i=1}^k \frac{k+1-i}{k} \sum_{e \in E_i} |\{C \in \supp(\hat{y}): e \in C\}| \\
			& \le \sum_{i=1}^k \frac{k+1-i}{k} \sum_{e \in E_i} (s-\tau + i - 1) \\
			& = \frac{1}{k} \sum_{i=1}^k (k+1-i)(s-\tau + i - 1) |E_i| \\
			& \le \frac{1}{k} \sum_{i=1}^k \left(\frac{k+s-\tau}{2}\right)^2 |E_i| \\
			& = \frac{(k+s-\tau)^2}{4k} \cdot |E_{\leq k}|,
		\end{align*} where the second to last inequality follows from the arithmetic mean--geometric mean inequality. 
		Dividing by $s$ proves the claim.
	\end{claimproof}
	
	Let $d \coloneqq s-\tau\geq 1$, $k
	\coloneqq 3d$, and $m'\coloneqq |E_{\leq k}|$. Then by \Cref{cl:average-mate-small-weight} and \Cref{cl:average-set-small-weight} we have
	\[
		\frac{s}{d} - \frac{s-d-2}{3d} + 1 \leq \Phi_k\leq \frac{(4d)^2}{12ds}\cdot m'.
	\]
	Multiplying both sides by $\frac{3ds}{2}$ yields
	$s^2  + 2ds + s \le 2d^2 m'$, 
	which after ignoring $2ds+s$ from the left-hand side implies that $\frac{s}{\sqrt{2m'}} < d$. 
	In terms of $\tau$, this yields
	\[\tau = s-d < \left(1-\frac{1}{\sqrt{2m'}}\right) s \le \left(1-\frac{1}{\sqrt{2m'}}\right) r'.\]
	Given that $r'\leq \rank(M(\cC))=r$, $k\leq 3(r'-\tau)\leq 3(r-\tau)$, and $m'\leq |E_{\leq 3(r-\tau)}|=m$, the inequality in part~\ref{mfmc-testing-ideal:tau-m-r} follows.
\end{proof}

\subsection{Bringing it altogether}

\begin{proof}[Proof of \Cref{mfmc-testing}]
\Cref{mfmc-testing} follows from \Cref{mfmc-testing-non-ideal}, and \Cref{mfmc-testing-ideal}, part~\ref{mfmc-testing-ideal:tau-n} after noting that no integer lies strictly between $n-2\sqrt{n}+1$ and $n-\sqrt{4n+1}+1$.
\end{proof}

\begin{proof}[Proof of \Cref{mfmc-testing-cuboid}]
	The forward implication holds by definition, so let us focus on its contrapositive. 
	Suppose $\cC$ is not MFMC. Then by \Cref{mfmc-testing-non-ideal} and \Cref{mfmc-testing-ideal}, there exists $\bar{w}\in \bZ_{\geq 0}^E$ such that $\tau(\cC,\bar{w})-\nu(\cC,\bar{w})=1$, every element appears in some minimum $\bar{w}$-weight cover, amongst other criteria that will discuss shortly. Let $\tau\coloneqq \tau(\cC,\bar{w})$.
	
	We claim that each $\{i,\bar{i}\}$ is a minimum $\bar{w}$-weight cover. 
	Suppose otherwise. Let $B,B'$ be minimum $\bar{w}$-weight covers containing $i,\bar{i}$, respectively. Let $B''\coloneqq(B\cup B')\setminus \{i,\bar{i}\}$, which is also a cover of $\cC$ by the resolution principle. Note however that $$
	w(B'') = w(B)+w(B') - w_i-w_{\bar{i}} - w(B\cap B')<\tau+\tau-\tau = \tau,
	$$ a contradiction. 
	
	Let $M\coloneqq M(\cC)$. As $\cC$ is a cuboid, it follows that $r\coloneqq \rank(M)\leq d+1$. Thus, if $\cC$ is non-ideal, then \Cref{mfmc-testing-non-ideal} implies that $\tau\leq \lceil (d+1)/2 \rceil$. 
	Otherwise, $\cC$ is ideal, and by \Cref{mfmc-testing-ideal}, part~\ref{mfmc-testing-ideal:tau-m-r}, $\tau< (1-1/\sqrt{2m}) (d+1)$, where $m$ is the number of elements of $\bar{w}$-weight at most $3(r-\tau)$. If $r<7\tau/6$, then $3(r-\tau)< \tau/2$, so $m\leq d$, implying in turn that $\tau< (1-1/\sqrt{2d}) (d+1)$.
	Otherwise, $d+1\geq r\geq 7\tau/6$, so $\tau\leq 6(d+1)/7$, thus finishing the proof.
\end{proof}

\begin{proof}[Proof of \Cref{tau=2->replicon}]
	Suppose the $\tau=2$ conjecture is true for all clutters with fewer than $(n-1)^2/4$ elements. Let $\cC$ be a clutter with the packing property over $n$ elements. Then $\cC$ is ideal by Lehman's \Cref{lehman}. Let $w\in \bZ^n_{\geq 0}$ such that $\1^\top w< (n-1)^2/4$. To prove that $\cC$ is MFMC, it suffices to show that $\tau(\cC,w)=\nu(\cC,w)$ by \Cref{mfmc-testing-ideal}, part~\ref{mfmc-testing-ideal:average}. Let $\cC^w$ be the clutter obtained from $\cC$ after replicating each element $i$, $w_i-1$ times (if $w_i=0$, then delete $i$). As $\cC$ is ideal, so is $\cC^w$. We need to show that $\cC^w$ packs. Suppose otherwise. Then $\cC^w$ has an ideal mnp minor $\cC'$. As $\cC^w$ has $\1^\top w< (n-1)^2/4$ elements, the $\tau=2$ conjecture holds for $\cC'$, so $\tau(\cC')=2$. As $\cC$ has the packing property, $\cC'$ is not its minor, so $\cC'$ must contain a pair of replicates, say $i\neq j$. As $\cC'$ is mnp, $i$ belongs to some minimum cover $B$. As $i,j$ are replicates, it follows that $j\in B$, so $B=\{i,j\}$. Given that $\cC'/i $ packs, and $\tau(\cC'/i)\geq \tau(\cC')=2$, $\cC'/i= \cC'/i\setminus j$ contains disjoint sets $C_1,C_2$. But then $C_1,C_2\triangle \{i,j\}$ are disjoint sets of $\cC'$, a contradiction as $\cC'$ is non-packing.
	\end{proof}
	
	\section{From bounded degree to bounded dimension}\label{section:degree-dimension}

In this section, we prove that if the replication conjecture holds for all cuboids of dimension at most $\delta$, then it holds for all cuboids of degree at most $\delta$. This is done by proving \Cref{non-MFMC-cuboid}, showing that every strictly polar set-system of degree $\delta$ with a non-MFMC cuboid has a restriction of dimension at most $\delta$ with a non-MFMC cuboid. To this end, we need a few definitions.
	
	Let $d \ge 1$ be an integer, and let $S\subseteq \{0,1\}^d$ be a set-system. We refer to the points in $S$ and $\{0,1\}^d\setminus S$ as \emph{feasible} and \emph{infeasible} points, respectively. Denote by $G_d$ the graph on vertex set $\{0,1\}^d$ where two vertices are adjacent if they are at Hamming distance $1$.
	
	Given weights $w\in \bZ_{\geq 0}^{[d]\cup [\bar{d}]}$ defined on the ground set of $\cuboid(S)$, a \emph{lopsided twisting} of $(S,w)$ is a twisting of the coordinates, along with their corresponding weights, such that $w_i\leq w_{\bar{i}}$ for all $i\in [d]$.
	
\begin{lemma} \label{lem:zero-infeasible}
	Let $S \subseteq \{0,1\}^d$ be a polar set-system and let $w\in \bZ^{[d]\cup [\bar{d}]}_{\ge 0}$ be weights satisfying the following conditions:
	\begin{enumerate}[label=(h\arabic*)]
			\item \label{it:h1} $\tau\coloneqq\tau(\cC, w) > \nu(\cC, w)$,
			\item \label{it:h2} $w$ is entry-wise minimal subject to satisfying \ref{it:h1}, 
			\item \label{it:h3} 
			for every single restriction $S'\subseteq \{0,1\}^{d-1}$ of $S$, and all weights $w'\in \mathbb{Z}^{2d-2}_{\ge 0}$ such that $\tau(\cC,w') = \tau$, we have $\tau(\cC,w')=\nu(\cC,w')$.
		\end{enumerate} Then $(S,w)$ has a lopsided twisting such that $\0 \in \bar{S}$. 
\end{lemma}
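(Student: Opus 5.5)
We apply \Cref{prop:acl} directly: its hypotheses are exactly \ref{it:h1}--\ref{it:h3} together with polarity of $S$. The point is that lopsidedness fixes the twisting at every coordinate except those with a tie. We then use the structure of mates from \Cref{prop:acl}~\ref{it:mate} to show that the ties can be resolved so that the origin is infeasible.

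\emph{Setup.} By \Cref{prop:acl}~\ref{it:w}, $w_i+w_{\bar i}=\tau$ for every $i\in[d]$, so in each pair $\{i,\bar i\}$ one element has weight at least $\tau/2$; call it the \emph{heavy} element. Lopsided twistings are then in bijection with the choices of a heavy element from each pair. The choice is forced unless $w_i=w_{\bar i}=\tau/2$, in which case either element may be taken.

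Under a lopsided twisting, the point $\0$ corresponds to the set $[\bar d]$ of the twisted cuboid. In the original labelling, this is the set $H$ consisting of the chosen heavy element of each pair. So it suffices to find a choice of heavy elements such that $H\notin\cC$.

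\emph{The contradiction argument.} Suppose, for a contradiction, that $H\in\cC$ for every admissible choice. Fix one choice and let $C\coloneqq H$. By \Cref{prop:acl}~\ref{it:mate}, $C$ has a mate $B$, which is a minimal cover with $w(B)\le\tau-2+|B\cap C|$.

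Every element of $B\cap C$ has weight at least $\tau/2$. Since a mate has at most two elements of weight at least $\tau/2$, we get $|B\cap C|\le 2$.

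If $|B\cap C|\le 1$, then $w(B)\le\tau-1<\tau$. This contradicts the fact that $B$ is a cover and $\tau=\tau(\cC,w)$.

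Hence $|B\cap C|=2$, say $B\cap C=\{f,g\}$. The second part of \Cref{prop:acl}~\ref{it:mate} then gives $B\subseteq C$ and $w_f=w_g=\tau/2$. Combined with $B\cap C=\{f,g\}$, this yields $B=\{f,g\}$.

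Since $B\neq\{i,\bar i\}$ for all $i$, the elements $f$ and $g$ lie in distinct pairs, say $f\in\{i,\bar i\}$ and $g\in\{j,\bar j\}$ with $i\neq j$. Both pairs are ties, since $w_f=w_g=\tau/2$. We may therefore switch the heavy choice in both pairs, obtaining the admissible heavy set
\[
C''\coloneqq C\triangle\{i,\bar i,j,\bar j\}.
\]
By our assumption $C''\in\cC$. But $C''$ is disjoint from $\{f,g\}=B$, contradicting that $B$ is a cover.

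Therefore some choice of heavy elements gives $H\notin\cC$. The corresponding lopsided twisting has $\0\in\bar S$.

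\emph{Main obstacle.} The delicate step is the case $|B\cap C|=2$. The argument needs the full strength of the mate structure in \Cref{prop:acl}~\ref{it:mate} to conclude $B=\{f,g\}$ with both $f$ and $g$ at tied weight $\tau/2$. It also needs the fact that $B$ is not of the form $\{i,\bar i\}$, so that flipping both ties simultaneously yields a member of the cuboid that avoids $B$.
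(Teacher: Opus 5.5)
Your proposal is correct and follows essentially the paper's argument: the mate of the all-heavy set must be a two-element set $\{f,g\}$ of tied elements of weight $\tau/2$, and flipping those two tied coordinates gives a lopsided twisting whose origin is infeasible. The only difference is framing. You argue by contradiction over all heavy choices, whereas the paper argues directly that the cover $\{\bar i,\bar j\}$ certifies $e_i+e_j\in\bar S$ and then twists coordinates $i$ and $j$.
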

\begin{proof}
	Consider a lopsided twisting of $(S,w)$ and let $\cC \coloneqq \cuboid(S)$. 
	Assume that $\0 \in S$, that is, $[\bar{d}]\coloneqq\{\bar{1},\ldots,\bar{d}\}\in \cC$.
	Let $B$ be a mate of $[\bar{d}]$.
	As $\tau \le w(B) \le \tau-2+|B\cap [\bar{d}]|$, 
	we have $|B\cap [\bar{d}]|\ge 2$, thus $B$ contains at least two elements of weight at least $\tau/2$.
	Then, \cref{prop:acl}\ref{it:mate} implies that it contains exactly two, say $f$ and $g$, $B\subseteq [\bar{d}]$ and $w_e=1$ for all $e \in B\setminus \{f, g\}$.
	It follows that $B = \{\bar{i}, \bar{j}\}$ for some $i, j \in [d]$ with $w_i = w_j = \tau/2$.
	In particular, $e_i+e_j \in \bar{S}$.
	Lopsidedly twisting both the $i$th and $j$th coordinates maps $e_i+e_j$ to $\0$, thus $\0\in \bar{S}$ after twisting.
\end{proof}
	
\begin{thm} \label{thm:degree-dimension}
	Let $S\subseteq \{0,1\}^d$ be a polar set-system, let $\cC\coloneqq\cuboid(S)$, and let $w\in \mathbb{Z}_{\ge 0}^{[d]\cup [\bar{d}]}$ be weights satisfying \ref{it:h1}, \ref{it:h2}, and \ref{it:h3}. 
	Then $S$ has degree $d$, that is, there is an infeasible point with no feasible neighbor.
\end{thm}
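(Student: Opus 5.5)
The plan is to normalise with \Cref{lem:zero-infeasible}, then show by a mate analysis that the origin (or a nearby point) has no feasible neighbour. First apply \Cref{lem:zero-infeasible} to pass to a lopsided twisting of $(S,w)$ with $\0\notin S$. Twisting only relabels coordinates and preserves \ref{it:h1}--\ref{it:h3}, so we may assume throughout that $w_i\le \tau/2\le w_{\bar i}$ for all $i$. By \Cref{prop:acl}\ref{it:w} we also have $w_i+w_{\bar i}=\tau$, $w_i,w_{\bar i}\ge 1$ and $\tau\ge 3$. The natural candidate for an infeasible point with no feasible neighbour is $\0$ itself. So the goal is to show that $e_i\notin S$ for every $i\in[d]$, or, failing that, to move to a better base point.

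Suppose $e_i\in S$ for some $i$, i.e.\ $C\coloneqq \{i\}\cup([\bar d]\setminus\{\bar i\})\in\cC$. Take a mate $B$ of $C$ from \Cref{prop:acl}\ref{it:mate}, so that $w(B)\le\tau-2+|B\cap C|$ and hence $|B\cap C|\ge 2$. Every element of $C\setminus\{i\}$ is barred and so has weight $\ge\tau/2$. By \Cref{prop:acl}\ref{it:mate}, $B$ contains at most two elements of weight $\ge \tau/2$. This gives two cases.
\begin{enumerate}[label=(\roman*)]
\item $B$ contains exactly two barred elements $\bar j,\bar k$ of $C$. Then $B\subseteq C$, $w_{\bar j}=w_{\bar k}=\tau/2$, and all other elements of $B$ have weight $1$. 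So $B=\{\bar j,\bar k\}$ or $B=\{i,\bar j,\bar k\}$ with $w_i=1$.
\item $B\cap C=\{i,\bar j\}$. Then the weight bound forces $w(B)\le \tau$, so $B$ is a minimum cover containing $i$ and $\bar j$, with the rest of $B$ having total weight at most $\tau-w_i-w_{\bar j}\le w_j-w_i$.
\end{enumerate}
In case (ii), I would combine $B$ with the cover certifying $\0\notin S$ via the resolution principle on coordinate $j$ (or $i$), and aim to produce a cover of weight $<\tau$, which is a contradiction. This mirrors the argument in the proof of \Cref{mfmc-testing-cuboid} that each $\{i,\bar i\}$ is a minimum cover.

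Case (i) cannot simply be ruled out, since it is exactly the configuration produced in \Cref{lem:zero-infeasible}. Here the covers $\{\bar j,\bar k\}$ (or $\{i,\bar j,\bar k\}$) say that every point with $x_j=x_k=1$ (and $x_i=0$) is infeasible. Moreover, coordinates with $w_j=w_{\bar j}=\tau/2$ can be twisted while staying lopsided. So I would make an extremal choice: among all lopsided twistings with $\0\notin S$, choose one minimising the number of feasible neighbours of $\0$. If some $e_i$ is still feasible, case (i) produces tie coordinates $j,k$ such that, after twisting $j$ and $k$, the new origin $e_j+e_k$ is infeasible. I would then show that its neighbours are all infeasible, except possibly fewer than before, using the covers $\{\bar j,\bar k\}$ and the mates of the remaining feasible neighbours. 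That contradicts minimality.

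I expect this extremal/potential step in case (i) to be the main obstacle. In particular, it is delicate to control the neighbours $e_j+e_k\pm e_\ell$ simultaneously, and to make sure that repeated twisting does not cycle. My first attempt would be to track, as a potential, the number of pairs $\{j,k\}$ with $\{\bar j,\bar k\}$ a cover. If that fails, I would use the polarity of $S$ (antipodal feasible points) to exclude the bad configurations.
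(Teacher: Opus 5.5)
Your proposal has a genuine gap. Neither of its two load-bearing steps is carried out, and the first one cannot work as described.

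In case (ii), the mate $B$ satisfies $w(B)=\tau$ and $B\cap[\bar d]=\{\bar j\}$, and such a cover is not contradictory. A cover ``certifying $\0\notin S$'' is just some cover contained in $[d]$ (e.g.\ $[d]$ itself). Nothing forces it to have weight $\tau$, to contain $j$, or to meet $B$. So resolving it with $B$ on coordinate $j$ only produces another cover inside $[d]$ with no useful weight bound. Resolving on $i$ is impossible, since that cover contains no $\bar i$. The argument from the proof of \Cref{mfmc-testing-cuboid} you want to mimic works only because there \emph{both} resolved covers have weight $\tau$.

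In the paper these covers are not ruled out but exploited. For some lopsided twisting with $\0\notin S$, the proof maintains minimum-weight covers $B_1,\dots,B_l$ and distinct indices $j_1,\dots,j_l$ with $B_i\cap[\bar d]=\{\bar{j_i}\}$ and $B_i\cap\{j_1,\dots,j_l\}=\emptyset$. It shows this family can always be extended by one, and at $l=d$ it obtains singleton covers $\{\bar{j_i}\}$, a contradiction.

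In case (i) you give no argument for why twisting $j,k$ lowers the number of feasible neighbours of the origin, nor why the new neighbours $e_j+e_k\pm e_\ell$ behave. You flag this yourself as the main obstacle; as it stands it is a hope, not a step.

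More fundamentally, you try to show that an infeasible point with no feasible neighbour can be taken to be the origin of a lopsided twisting. This is stronger than the theorem, because lopsided twisting can only move the origin along coordinates with $w_i=w_{\bar i}=\tau/2$. You also use the degree hypothesis only at that one point.

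The paper instead assumes for contradiction that every infeasible point has a feasible neighbour, and uses this at many different points:
\begin{itemize}
\item At $e_1+\dots+e_l$, it finds a feasible neighbour $e_1+\dots+e_l+e_{j_0}$ whose mate must be of the special form $\{j_0,f_0,g_0\}$ with $w_{j_0}=1$ and $w_{f_0}=w_{g_0}=\tau/2$.
\item In the key lemma, two covers each meeting every pair $\{i,\bar i\}$ at most once must meet a common pair; otherwise the point avoiding both would be an infeasible point with no feasible neighbour.
\item At the points determined by transversals $T$ of the remaining pairs $\{s,\bar s\}$ with $w_s=\tau/2$, it obtains an unsatisfiable 2-SAT instance. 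Its implication-graph paths are converted by repeated resolution into covers that either have weight below $\tau$ or extend the family $B_1,\dots,B_l$.
\end{itemize}
What your proposal is missing is a mechanism of this kind: a growing certificate of minimum-weight covers, together with repeated use of the degree hypothesis away from the origin.
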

\begin{proof}
	Assume to the contrary that $S$ has degree at most $d-1$.
	We assume that the current twisting of $(S,w)$ is lopsided, that is, $w_i\leq w_{\bar{i}}$ for all $i\in [d]$. 
	We will show by induction that for any integer $l\in \{0,1,\ldots,d\}$, and for some lopsided twisting of $(S,w)$, there exist covers $B_1, \dots, B_l$, and pairwise distinct indices $j_1, \dots, j_l \in [d]$ such that $\0 \in \bar{S}$ and for each $i \in [l]$,
	\begin{align} \label{eq:new_ind}
		w(B_i)=\tau, ~ B_i\cap [\bar{d}] = \{\bar{j_i}\}, ~B_i\cap \{j_1,\dots, j_l\} = \emptyset. 
	\end{align}
	
	The base case $l=0$, that $\0 \in \bar{S}$ can be achieved after a possible lopsided twisting, follows from \Cref{lem:zero-infeasible}. 
		
	Assume next for contradiction that the statement holds for some $d-1\geq l \ge 0$ and some lopsided twisting of $(S,w)$, but fails for $l+1$. Thus we have $0 \in \bar{S}$ and indices $j_1,\dots, j_l$, and covers $B_1,\dots, B_l$ satisfying \eqref{eq:new_ind}.
	We may assume that $j_1=1, \dots, j_l = l$. (It is possible that $l=0$.) In what follows, we prove a series of claims on covers and mates that will be used multiple times.
		
	\begin{claim} \label{cl:disjoint_mate}
		Each $C \in \cC$ has a mate $B$ with $B\cap [l] = \emptyset$.
	\end{claim}
	\begin{claimproof}
		Let $B$ be a mate of $C$ such that $|B\cap [l]|$ is minimal.
		Assume to the contrary that $|B\cap [l]| \ge 1$ and take an index $i \in B\cap [l]$.
		Since $B \cap \{i, \bar{i}\} = \{i\}$, $B_i \cap \{i, \bar{i}\} = \{\bar{i}\}$, and both $B$ and $B_i$ are covers, it follows from the resolution principle that $B'\coloneqq (B\cup B_i) \setminus \{i, \bar{i}\}$ is a cover as well.
		We have 
		\[w(B') = w(B)+w(B_i)-w(B\cap B_i) -w_i-w_{\bar{i}} = w(B)-w(B\cap B_i)\] and 
		\begin{align*}
			|B'\cap C| & = |B\cap C| + |B_i \cap C| - |B\cap B_i \cap C| - |\{i, \bar{i}\} \cap C| \\
			& \ge |B \cap C| + 1 - |B\cap B_i| - 1 \\
			& \ge |B\cap C| - w(B\cap B_i),
		\end{align*}
		hence
		\[w(B')-|B'\cap C| \le w(B)-|B\cap C| \le \tau -2.\]
		Take a minimal cover $B''\subseteq B'$.
		Then, \[w(B'')-|B''\cap C| \le w(B')-|B'\cap C| \le w(B)-|B\cap C| \le \tau -2.\]
		Therefore, $B''$ is a mate of $C$ with $|B''\cap [l]| \le |B'\cap [l]| = |B\cap [l]|-1$, a contradiction.
	\end{claimproof}
		
	\begin{claim} \label{cl:no_disjoint_covers}
		Suppose $B$ and $B'$ are covers such that $|B\cap \{i, \bar{i}\} | \le 1$ and $|B'\cap \{i, \bar{i}\}| \le 1$ for each $i \in [d]$.
		Then there exists $i \in [d]$ with $|B\cap \{i, \bar{i}\}| = |B'\cap \{i, \bar{i}\}| = 1$.
	\end{claim}
	\begin{claimproof}
		Assume to the contrary that $|B\cap \{i,\bar{i}\}| + |B'\cap \{i, \bar{i}\}| \le 1$ for all $i \in [d]$.
		Then there exists $p \in \{0,1\}^d$ such that the set corresponding to $p$ is disjoint from both of $B$ and $B'$.
		Since $B$ and $B'$ are covers, we have $p \in \bar{S}$. 
		For each $i \in [d]$, $|B\cap \{i,\bar{i}\}| + |B'\cap \{i, \bar{i}\}| \le 1$ implies that the set corresponding to $p\triangle e_i$ is disjoint from at least one of $B$ and $B'$, thus $p\triangle e_i \in \bar{S}$.
		Therefore, $p$ has degree $d$ in $G_d[\bar{S}]$, a contradiction as $S$ has degree at most $d-1$.
	\end{claimproof}
	
	\begin{claim} \label{cl:size_two_cover} 
		Suppose $f, g \in [d] \cup [\bar{d}]$ are distinct indices such that $f\neq \bar{g}$, $w_f = w_g = \tau/2$, and $B\coloneqq \{f,g\}$ is a cover. Then $\{f, \bar{f}, g, \bar{g}\} \cap (B_1 \cup \dots \cup B_l) \neq \emptyset$. In particular, $\ell\geq 1$.
	\end{claim}
	\begin{claimproof}
		Assume to the contrary that $\{f, \bar{f}, g, \bar{g}\} \cap (B_1 \cup \dots \cup B_l) = \emptyset$.
		Let $i, j \in [d]$ be the indices such that $f \in \{i, \bar{i}\}$ and $g \in \{j, \bar{j}\}$. By assumption, $i\neq j$. 
		Since $\{f, \bar{f}, g, \bar{g}\} \cap (B_1 \cup \dots \cup B_l) = \emptyset$, we have $i, j \ge l+1$.
		If $|B \cap [d]| = |B\cap [\bar{d}]| = 1$, say $f=i$ and $g=\bar{j}$, then as $i\neq j$, \eqref{eq:new_ind} is satisfied for the indices $1, \dots, l, j$ and covers $B_1, \dots, B_l, B$, a contradiction.
		Otherwise, we have $B=\{i,j\}$ or $B = \{\bar{i}, \bar{j}\}$.
		If $e_i \in \bar{S}$ or $e_j \in \bar{S}$, then lopsidedly twisting the $i$th or $j$th coordinate, respectively, maintains $\0 \in \bar{S}$ and yields $|B\cap [d]| =|B\cap [\bar{d}]| = 1$, thus we get the same contradiction as in the previous case.
			
		It remains to deal with the case $e_i, e_j \in S$.
		
		As $e_i \in S$, the corresponding set $C = [\bar{d}]\triangle \{i,\bar{i}\} \in \cC$ has a mate $B'$.
		As $w_e \ge \tau/2$ for each $e \in C$, we have $B'\subseteq C$, $|B'|=2$, and $w_e = \tau/2$ for $e \in B'$.			\cref{cl:no_disjoint_covers} applied to $B$ and $B'$ shows that $B'\cap \{i, \bar{j}\}\ne \emptyset$.
		We have $B'\ne \{i, \bar{j}\}$, as otherwise either both $\{i, \bar{j}\}$ and $\{i,j\}$ or both $\{i, \bar{j}\}$ and $\{\bar{i}, \bar{j}\}$ are covers, thus by the resolution principle, either $\{i\}$ or $\{\bar{j}\}$ would be a cover of size one, a contradiction.
		We get that $B'= \{i,\bar{s}\}$ or $B'= \{\bar{j}, \bar{s}\}$ for some $s \in [d]\setminus \{i,j\}$ with $w_s = \tau/2$.
		If $B=\{i,j\}$ and $B'=\{\bar{j}, \bar{s}\}$, then $(B\cup B') \setminus \{j,\bar{j}\}= \{i, \bar{s}\}$ is a cover, while if $B=\{\bar{i},\bar{j}\}$ and $B'=\{i, \bar{s}\}$, then $(B\cup B') \setminus \{i, \bar{i}\} = \{\bar{j}, \bar{s}\}$ is a cover, by the resolution principle. 
		We conclude that $\{i, \bar{s}\}$ is a cover in all cases with $B=\{i,j\}$, and $\{\bar{j}, \bar{s}\}$ is a cover in all cases with $B=\{\bar{i}, \bar{j}\}$.
		
		By using a similar argument with $e_j\in S$ instead of $e_i$, we get that for some $t \in [d]\setminus \{i,j\}$, either all of $\{i,j\}, \{i, \bar{s}\}, \{j, \bar{t}\}$ or all of $\{\bar{i}, \bar{j}\}, \{\bar{j}, \bar{s}\}, \{\bar{i}, \bar{t}\}$ are covers. Now take a point $p \in \{0,1\}^d$ such that the set corresponding to $p$ is disjoint from all three covers.
		For any such $p$ and any $k \in [d]$, $p \triangle e_k$ is disjoint from at least one of the three covers since no element is contained in their intersection.
		This shows that $p$ is an infeasible point with all infeasible neighbors, contradicting $S$ having degree at most $d-1$. 	
	\end{claimproof}
	
	\begin{claim} \label{cl:special_mate}
		Assume that $C \in \cC$ satisfies $[l] \subseteq C$ and $C\setminus [l]$ contains at  most one element with weight strictly less than $\tau/2$.
		Then $C$ has a mate $B$ with $B\cap [l] = \emptyset$ such that one of the following holds:
		\begin{enumerate}[label=(\roman*)]
			\item \label{it:bad_size_two} $B = \{f, g\} \subseteq C$ for some $f, g \in \{l+1, \bar{l+1}, \dots,d, \bar{d}\}$ with $w_f = w_g = \tau/2$, 
			\item \label{it:bad_tau_plus_one} $B=\{j,f,g\}\subseteq C$ for some $j \in \{l+1, \dots,d\}$ and distinct $f, g \in \{l+1, \bar{l+1}, \dots,d, \bar{d}\}$ such that $w_j=1$ and $w_f=w_g=\tau/2$, 
			\item \label{it:bad_tau} $w(B) = \tau$, $B\cap C = \{j, s\}$ for some $j, s \in [d] \setminus[l]$ such that $w_j < \tau/2$ and $w_s = \tau/2$, and there exists $i \in [l]$ with $w_i=\tau/2$ and $\{\bar{i}, s\}$ being a cover. 
		\end{enumerate}
	\end{claim}
	\begin{claimproof}
		By \cref{cl:disjoint_mate}, $C$ has a mate $B$ with $B\cap [l] = \emptyset$.
		Assume first that $B$ contains at least two elements of weight at least $\tau/2$.
		Then \cref{prop:acl}, part~\ref{it:mate} implies that $B\subseteq C$, $B$ contains two elements of weight $\tau/2$, say $f$ and $g$, and $w_e = 1$ for each $e \in B\setminus \{f,g\}$.
		Since $B\cap [l] = \emptyset$ and $C\setminus [l]$ contains at most one element with weight less than $\tau/2$, we have $B=\{f, g\}$ or $B=\{j, f, g\}$ for some $j \in [d] \setminus [l]$ with $w_j = 1$. 
		This shows that \ref{it:bad_size_two} or \ref{it:bad_tau_plus_one} holds.
		
		It remains to deal with the case when $B$ contains at most one element of weight at least $\tau/2$. 
		As $B\cap [l] = \emptyset$ and $C\setminus [l]$ contains at most one element with weight less than $\tau/2$, we have $|B\cap C|\le 2$.
		Then, $\tau \le w(B) \le |B\cap C|+ \tau-2 \le \tau$, hence equality holds throughout.
		This implies that $w(B)=\tau$ and $B\cap C=\{j, e\}$ for the unique $j \in B\cap C$ with $w_j < \tau/2$ and for some $e \in C$ with $w_e \ge \tau/2$.
		We may assume that $e \in \{l+1, \bar{l+1}\}$, and note that this implies $j \ge l+2$.
		
		We claim that $e=l+1$. For if not, then $e = \bar{l+1}$, and we will show that $B_1,\dots, B_{l+1}$ satisfy \eqref{eq:new_ind} by setting $B_{l+1} \coloneqq B$, $j_1 \coloneqq 1, \dots, j_{l+1} \coloneqq l+1$, thus yielding a contradiction. 
		We already have that $B_1, \dots, B_{l+1}$ are covers with $w(B_1) = \dots = w(B_{l+1}) = \tau$, $B_i \cap [\bar{d}] = \{\bar{i}\}$ for $i \in [l]$, $B_{l+1} \cap [l+1] = \emptyset$ and $B_i \cap [l] = \emptyset$ for $i \in [l]$.
		It only remains to check that $l+1 \not \in B_i$ for $i \in[l]$.
		Assume to the contrary that $l+1 \in B_i$ for some $i \in [l]$.
		As $B_i \cap \{l+1, \bar{l+1}\} = \{l+1\}$ and $B \cap \{l+1, \bar{l+1}\}=\{\bar{l+1}\}$, $(B_i \cup B_{l+1}) \setminus \{l+1, \bar{l+1}\}$ is a cover by the resolution principle, hence \[\tau \le w((B_i \cup B)\setminus \{l+1, \bar{l+1}\}) = w(B_i) + w(B) - w(B\cap B_i) - w_{l+1} - w_{\bar{l+1}} = \tau - w(B\cap B_i),\]
		thus $B\cap B_i = \emptyset$.
		In particular, we have $j \not \in B_i$.
		Since $B_i \cap C \ne \emptyset$, $B_i\cap ([\bar{d}] \cup [l+1]) = \{\bar{i}, l+1\}$ and $C\cap \{\bar{i}, l+1\} = \emptyset$, we get that $s \in B_i \cap C$ for some $s \ge l+2$ with $s \ne j$.
		As each element of $C\setminus ([l] \cup \{j\})$ has weight at least $\tau/2$, we have
		\[w(B_i) \ge w_{\bar{i}} + w_{l+1} + w_{s} \ge \frac{\tau}{2} + w_{l+1} + \frac{\tau}{2} > \tau,\]
		a contradiction.
		This shows that $B_1, \dots, B_{l+1}$ indeed satisfy \eqref{eq:new_ind}, a contradiction.
		Thus, $e=l+1$.
		
		In summary, we have $B\cap C = \{j, l+1\}$ and $w_{l+1} = \tau/2$. In what follows, we show that \ref{it:bad_tau} holds for $s=l+1$ and some $i\in [l]$.
		
		If $l+1 \in B_i$ for some $i \in [l]$, then $\tau = w(B_i) \ge w_{\bar{i}} + w_{l+1} + w(B_i\setminus \{\bar{i}, l+1\}) \ge \tau/2 + \tau/2 + 0$, hence $w_{\bar{i}} = \tau/2$ and $B_i = \{\bar{i}, l+1\}$.
		This shows that \ref{it:bad_tau} holds for $s=l+1$.
		
		Otherwise, we have that $l+1 \not \in B_1 \cup \dots \cup B_l$.
		As $l+1$ is the only element of $B$ with weight at least $\tau/2$, we have $B\cap [\bar{d}] = \emptyset$.
		Therefore, $B$ is a minimum weight cover with $B\cap ([\bar{d}] \cup [l+1]) = \{l+1\}$, $w_{l+1} = \tau/2$ and $l+1 \not \in B_1 \cup \dots \cup B_l$. If $e_{l+1}\in \overline{S}$, then \eqref{eq:new_ind} holds for $B_1, \dots, B_l, B$ after lopsidedly twisting the $(l+1)$st coordinate, as $\0$ is infeasible after the twisting, a contradiction. Thus, $e_{l+1} \in S$. 
		
		Consider the set $C' \coloneqq [\bar{d}]\triangle \{l+1,\bar{l+1}\} \in \cC$ that corresponds to $e_{l+1}$, and let $B'$ be a mate of $C'$.
		As $w_e \ge \tau/2$ for each $e \in C'$, \cref{prop:acl}, part~\ref{it:mate} shows that $B'\subseteq C'$, $|B'|=2$ and $w_e = \tau/2$ for $e \in B'$.
		\cref{cl:no_disjoint_covers} shows that $|B\cap \{t, \bar{t}\}| = |B'\cap \{t, \bar{t}\}| = 1$ for some $t \in [d]$.
		As we have $w_t = \tau/2$ for each $t \in [d]$ with $|\{t, \bar{t}\} \cap B'| = 1$, and $w_t < \tau/2$ for each $t \in [d]\setminus \{l+1\}$ with $|\{t, \bar{t}\} \cap B| = 1$, we have that $t=l+1$ and so $l+1 \in B\cap B'$.
		Then $B' = \{\bar{i}, l+1\}$ for some $i \in [d]$ with $w_i = \tau/2$. Note that $i\neq l+1$ as $B'\subseteq C'$. 
		
		As $B'$ is a cover, it follows from \cref{cl:size_two_cover} that $\{i,\bar{i},l+1,\bar{l+1}\}\cap (B_1\cup \cdots\cup B_l)\neq \emptyset$. As $B_1\cup \cdots\cup B_l$ does not contain the indices $l+1,\bar{l+1}$, it must therefore contain one of $i,\bar{i}$. If it is the former, then $i\in B_r$ for some $r\in [l]$, so $B_r=\{i,\bar{r}\}$ and $w_{\bar{r}}=\tau/2$, as $w(B_r)=\tau$, $w_i=\tau/2$ and $w_{\bar{r}}\geq \tau/2$. However, as $l+1$ is the only element of $B$ with weight at least $\tau/2$, and $B_r\cap \{l+1,\bar{l+1}\}=\emptyset$, it follows that $|B\cap \{t,\bar{t}\}|+|B_r\cap \{t,\bar{t}\}|\leq 1$ for all $t\in [d]$, a contradiction to \cref{cl:no_disjoint_covers}. Thus, $B_1\cup \cdots\cup B_l$ must contain $\bar{i}$, i.e., $i \in [l]$. As $B'=\{\bar{i},l+1\}$ is a cover, it follows that \ref{it:bad_tau} holds for $s=l+1$, as claimed.
	\end{claimproof}
	
	Using these claims, we will construct sets in the cuboid and their mates. 
	First observe that
	\begin{align} \label{eq:inf}
		\{x \in \{0,1\}^d {}\mid{} x_i = 0\text{ for $i \ge l+1$}\} \subseteq \bar{S}.
	\end{align}
	Indeed, if $x \in \{0,1\}^d$ satisfies that $x_i=0$ for all $i \ge l+1$, then either $x=0$, or $x_j =1$ for some $j \le l$ in which case the set corresponding to $x$ is disjoint from the cover $B_j$; in either case, we have $x\in \bar{S}$. 
		
	Consider now the point $e_1+\dots + e_l$ which is in $\bar{S}$ by \eqref{eq:inf}.
	As $S$ has degree at most $d-1$, the infeasible point $e_1+\dots + e_l$ has a feasible neighbor, which by \eqref{eq:inf} must be of the form $e_1+\dots + e_l + e_{j_0} \in S$ for some $j_0 \ge l+1$.
	Let $C_0\in \cC$ denote the set corresponding to $e_1+\dots + e_l+e_{j_0}$. 
	For each $i\in [l]$, $B_i$ is a cover such that $B_i\cap [\bar{d}] = \{\bar{i}\}$, so $j_0\in B_i$ as $B_i\cap C_0\neq \emptyset$. Thus, $j_0 \in B_1 \cap \dots \cap B_{l}$. 
	Note that each element of $C_0\setminus ([l] \cup \{j_0\})$ has weight at least $\tau/2$, thus we can apply \cref{cl:special_mate} to get a mate $B_0$ of $C_0$ satisfying \ref{it:bad_size_two}, \ref{it:bad_tau_plus_one}, or \ref{it:bad_tau}.
	
	Case \ref{it:bad_tau} is impossible since we would have $|C\cap ([d] \setminus [l])|\geq 2$, which is not the case.
	
	Assume now that \ref{it:bad_size_two} holds. 
	Then either $w_{j_0} = \tau/2$ and $B_0 = \{j_0, \bar{s}\}$ for some $s \in [d] \setminus ([l] \cup \{j\})$ with $w_s = \tau/2$, or $B_0 = \{\bar{s}, \bar{t}\}$ for some $s, t \in [d] \setminus ([l] \cup \{j\})$ with $w_s = w_t = \tau/2$.
	If the former holds, then for each $i \in [l]$, 
	$w(B_i) = \tau$ and $ \{\bar{i}, j_0\} \subseteq B_i$ together imply that $w_i = \tau/2$ and $B_i = \{\bar{i}, j_0\}$.
	Then $B_1, \dots, B_l, B_0$ satisfy \eqref{eq:new_ind} with $j_1 = 1, \dots, j_l = l, j_{l+1} = s$, a contradiction as we have assumed that \eqref{eq:new_ind} fails for $l+1$. 
	Otherwise, we have $B_0 = \{\bar{s}, \bar{t}\}$ for some $s, t \in [d] \setminus ([l] \cup [j])$ with $w_s = w_t = \tau/2$. 
	As $(B_1 \cup \dots \cup B_l) \cap [\bar{d}] = [\bar{l}]$, \cref{cl:size_two_cover} shows that $l\geq 1$ and $\{s, t\} \cap B_i \ne \emptyset$ holds for some $i \in [l]$, say $s \in B_i$. 
	Then $\tau = w(B_i) \ge w_{\bar{i}} + w_{j_0} + w_s \ge \tau/2 + 1 + \tau/2 = \tau+1$, a contradiction.
	
	We conclude that \ref{it:bad_tau_plus_one} holds for $B_0$.
	To summarize, there exist $f_0, g_0 \in \{\bar{l+1}, \dots,\bar{d}\}\setminus \{\bar{j_0}\}$ such that
	\begin{align} \label{eq:j0}
		j_0 \in B_1 \cap \dots \cap B_l, \quad  w_{j_0} = 1, \quad B_0 = \{j_0, f_0, g_0\}, \text{ and } w_{f_0} = w_{g_0} = \tau/2. 
	\end{align} In particular, $\tau$ must be even.
	
	Let $k \coloneqq |\{s \in [d]\setminus [l] \mid w_s = \tau/2\}|$.	
	To simplify notation, we may assume that 
	\begin{align} \label{eq:k}
		w_{l+1}, \dots, w_{d-k} < \frac{\tau}{2} \text{ and } w_{d-k+1} = \dots = w_d = \frac{\tau}{2}.
	\end{align}
	As $w_{j_0} = 1$, and $\tau>2$, we have $l+1 \le j_0 \le d-k$.
	If $s \in B_i$ for some $i \in [l]$ and $d-k+1 \le s \le d$, then $\tau = w(B_i) \ge w_{\bar{i}} + w_{j_0} + w_s \ge \tau/2 + 1 + \tau/2 = \tau+1$,  contradiction.
	We conclude that
	\begin{align} \label{eq:Bi_where}
		B_i \subseteq \{\bar{i}\} \cup \{l+1, \dots,d-k\} \qquad \forall i\in [l].
	\end{align}

	Next we use \eqref{eq:Bi_where} to strengthen \eqref{eq:inf} as follows.	
	
	\begin{claim} \label{cl:middle_cover}
		$\{l+1, \dots,d-k\}$ is a cover.
	\end{claim}
	\begin{claimproof}
		Assume to the contrary that there exists $p \in S$ with $p_{l+1} = \dots = p_{d-k} = 0$.
		Let $C \in \cC$ be the set corresponding to $p$.
		By \cref{cl:disjoint_mate}, $C$ has a mate $B$ with $B\cap [l] = \emptyset$.
		Since $w_e \ge \tau/2$ for each $e \in C\setminus [l]$, $B\cap [l]=\emptyset$, and $|B\cap C|\geq 2$, \cref{prop:acl}, part~\ref{it:mate} shows that $B=\{f, g\} \subseteq C$ for some $f, g$ with $w_f = w_g = \tau/2$. By \eqref{eq:k}, we have $f, g \in [\bar{l}] \cup \{d-k+1, \bar{d-k+1}, \dots,d, \bar{d}\}$. 
		As $e_1+\dots + e_l + e_{j_0} \in S$ where $l+1 \le j_0 \le d-k$ and $B$ is a cover, we get that $B\cap \{\bar{d-k+1}, \dots, \bar{d}\} \ne \emptyset$, say $g= \bar{d}$. It follows from \cref{cl:size_two_cover} that $(B_1 \cup \dots \cup B_l) \cap \{f, \bar{f}, g, \bar{g}\} \ne \emptyset$, so $l\geq 1$. By \cref{cl:no_disjoint_covers} applied to $B,B_1$, and by \eqref{eq:Bi_where}, we have that $f=\bar{1}$. Furthermore, \cref{cl:no_disjoint_covers} applied to $B,B_0$ implies that $g\in \{f_0,g_0\}$, say $g=g_0$. Subsequently, $B\cap B_0\cap B_1=\emptyset$. Let $q\coloneqq \sum_{i=1}^l e_i + e_d$. The set corresponding to $q$ is disjoint from all the covers $B, B_0, B_1$, and given that $B\cap B_0\cap B_1=\emptyset$, the set corresponding to every neighbor of $q$ is disjoint from at least one of $B, B_0, B_1$. This implies that $q$ and all its neighbors are infeasible, a contradiction as $S$ has degree at most $d-1$. 
	\end{claimproof}
		
	Previously we used \eqref{eq:inf} and \cref{cl:special_mate} to get a cover of the form \ref{it:bad_tau_plus_one}.
	Now we generalize this to get more covers of this form.
	
	\begin{claim} \label{cl:transversals}
		For each $T\subseteq \{d-k+1, \bar{d-k+1}, \dots,d, \bar{d}\}$ with $|T\cap \{d-k+1, \bar{d-k+1}\}| = \dots = |T\cap \{d, \bar{d}\}| = 1$, there exist $j \in \{l+1, \dots,d-k\}$ and distinct $f, g \in T$ such that $f\ne \bar{g}$, $w_j = 1$ and $\{j, f, g\}$ is a cover.
	\end{claim}
	\begin{claimproof}
		Let $p \coloneqq \sum_{i=1}^l e_i + \sum_{t \in \{d-k+1, \dots,d\}\cap T} e_t$.
		By \cref{cl:middle_cover}, we have $p \in \bar{S}$ and $p \triangle e_j \in \bar{S}$ for each $j \in [d] \setminus \{l+1, \dots,d-k\}$.
		As $p$ has degree at most $d-1$ in $G_d[\bar{S}]$, we get that $p \triangle e_j \in S$ for some $j \in \{l+1, \dots,d-k\}$.
		Let $C$ denote the set corresponding to $p$ and observe that $T\subseteq C$.
		As each element of $C\setminus ([l] \cup \{j\})$ has weight at least $\tau/2$, \cref{cl:special_mate} gives a mate $B$ of $C$ such that $B\cap [l]=\emptyset$ and satisfying \ref{it:bad_size_two}, \ref{it:bad_tau_plus_one}, or \ref{it:bad_tau}.
		In case of \ref{it:bad_size_two}, $B = \{f, g\}$ for some $f, g \in T$.
		Then $(B_1 \cup \dots \cup B_l) \cap \{f, \bar{f}, g, \bar{g}\} \ne \emptyset$ by \cref{cl:size_two_cover}, contradicting \eqref{eq:Bi_where}.
		If \ref{it:bad_tau} holds, then $\{\bar{i}, s\}$ is a cover for some $i \in [l]$ and $s \in \{d-k+1, \dots,d\}$.
		This is a contradiction, since $e_1+\dots + e_l + e_{j_0} \in S$ where $l+1 \le j_0 \le d-k$.
		We showed that $B$ satisfies \ref{it:bad_tau_plus_one}, that is, $B=\{j, f, g\}\subseteq C$ where $f, g \in \{d-k+1, \bar{d-k+1}, \dots,d, \bar{d}\}$ and $j\in \{l+1,\ldots,d-k\}$ satisfies $w_j=1$.
		Thus, $f, g \in T$ and the claim follows. 
	\end{claimproof}
	
	Let $V\coloneqq \{d-k+1, \bar{d-k+1}, \dots,d, \bar{d}\}$ and define 
	\[\mathcal{D} \coloneqq \left\{(j, \{f, g\}) \mid j \in \{l+1, \dots,d-k\}, f, g \in V, f \not \in \{g, \bar{g}\}, w_j = 1, \text{$\{j,f,g\}$ is a cover}\right\}.\]
	\cref{cl:transversals} equivalently states there exists no $T\subseteq V$ with $|T\cap \{d-k+1, \bar{d-k+1}\}| = \dots = |T\cap \{d, \bar{d}\}| = 1$ such that $\{\bar{f}, \bar{g}\}\cap T \ne \emptyset$ for all $(j, \{f, g\}) \in \cD$. 
	This states the unsatisfiability of a 2-SAT formula as follows. 
	Consider a set of variables $X\coloneqq \{x_{d-k+1}, \dots, x_d\}$, and let $L$ denote the set of literals over $X$.
	Define the mapping $\varphi\colon V \to L$ by $\varphi(i) = x_i$ and $\varphi(\bar{i}) = \bar{x}_i$ for $d-k+1 \le i \le d$.
	Then our previous formulation of \cref{cl:transversals} is equivalent to the unsatisfiability of the 2-SAT formula with variables $X$ and set of clauses
	\[\left\{\left(\bar{\varphi(f)} \vee \bar{\varphi(g)}\right) \mid (j, \{f,g\}) \in \cD\right\}.\]
	By \cref{prop:2sat}, this can be characterized using the implication graph of the instance.
	Consider a directed graph $D=(V, A)$ with the coloring $c\colon A \to \{d-k+1, \dots,d\}$ of its arcs such that an arc $f\bar{g}$ of and an arc $g\bar{f}$ are added to $A$ with $c(f\bar{g}) = c(g\bar{f}) = j$ for each $(j, \{f, g\}) \in \cD$, where we only add one of the arcs with the same start and end vertex (arbitrarily deciding its color among the feasible choices). 
	Then $D$ is isomorphic to the implication graph of the above 2-SAT formula.
	As the instance is unsatisfiable, \cref{prop:2sat} states that there exist both a directed $s\bar{s}$-path and a directed $\bar{s}s$-path in $D$ for some $s \in \{d-k+1, \dots,d\}$.
	We observe the following property of directed paths in $D$.
	
	\begin{claim} \label{cl:path}
		Let $P$ be a directed $u_1u_2$-path in $D$ for some $u_1, u_2 \in V$.
		Then $\{c(vw) \mid vw \in P\} \cup \{u_1, \bar{u_2}\}$ is a cover (of $\cC$).
	\end{claim}
	\begin{claimproof}
		We prove the claim by induction on the length $m$ of $P$. 
		If $m=0$, then $\{u_1, \bar{u_2}\} = \{u_1,\bar{u_1}\}$ is indeed a cover. 
		Assume now that the statement holds for all paths of length $m \ge 0$; we prove that it holds if $P$ has length $m+1$. 
		Let $zu_2$ denote the last arc of $P$, and let $P'$ be the $u_1z$-path obtained from $P$ by removing its last arc. 
		By the induction hypothesis, 
		\[B'\coloneqq \{c(vw) \mid vw \in P'\} \cup \{u_1, \bar{z}\}\] 
		is a cover. 
		As $zu_2$ is an arc of $D$, we have that $B''\coloneqq \{c(zu_2), z, \bar{u_2}\}$ is a cover. 
		Therefore, after applying the resolution principle to $B',B''$ by resolving $z$, we obtain that $\{c(vw) \mid vw \in P\} \cup \{u_1, \bar{u_2}\}$ is a cover. 
	\end{claimproof}

	We are ready to finish the proof. 
	Take a shortest directed path in $D$ that starts from some $u\in V$ and ends at $\bar{u}$. 
	By definition, $P$ does not contain both $v$ and $\bar{v}$ for any $v \in V\setminus \{u\}$.
	
	Consider the case first when $P$ has length two, that is, $P=\{uv, v\bar{u}\}$ for some $v\in V$. \cref{cl:path} shows that $B \coloneqq \{c(uv), c(v\bar{u}), u\}$ is a cover.
	Then $\tau \le w(B) \le 2+\tau/2$, hence $w(B)=\tau=4$, as $\tau>2$ is even.
	If $u \in [\bar{d}]$, then $B_1, \dots, B_l, B$ satisfy \eqref{eq:new_ind} with $j_1 = 1, \dots, j_l = l, j_{l+1} = u$, a contradiction.
	Otherwise, if $u \in [d]$, then $e_u \in \bar{S}$ by \cref{cl:middle_cover}, thus we get the same contradiction after twisting the $u$th coordinate.
	
	In the remaining case, $P$ has length at least three. 
	Let $v_1, v_2 \in V$ be the vertices such that $uv_1$ is the first and $v_2\bar{u}$ is the last arc of $P$. Note that $v_1 \notin \{v_2,\bar{v_2}\}$ as by the minimal choice of $P$ and the lower bound on its length. 
	By the definition of $D$, $B\coloneqq \{c(uv_1), u, \bar{v_1}\}$ and $B'\coloneqq \{c(v_2\bar{u}), v_2, u\}$ are covers.
	Let $p \in \{0,1\}^d$ be the point such that $p_i = 0$ for $i \in [d] \setminus \{v_1, \bar{v_1}, v_2, \bar{v_2}\}$ and the set corresponding to $p$ is disjoint from $B \cup B'$.
	As $B$ and $B'$ are covers, we have $p \in \bar{S}$.
	Since $S$ has degree at most $d-1$, $p$ has a feasible neighbor, say $q\in S$ with corresponding set $C\in \cC$.
	By \cref{cl:middle_cover}, $C$ intersects each of $B$, $B'$, and $\{l+1, \dots,d-k\}$, while the set corresponding to $p$ intersects none of these three sets, hence $B \cap B' \cap \{l+1, \dots,d-k\} \ne \emptyset$.
	This shows that $c(uv_1) = c(v_2\bar{u})=:c_0\in C$ and $q=e_{c_0}$. 	
	
	Consider now any arc $w_1w_2 \in P\setminus \{uv_1, v_2\bar{u}\}$.
	By the definition of $D$, $\{c(w_1w_2), w_1, \bar{w_2}\}$ is a cover.
	Assume first that $c(w_1w_2) \ne c_0$.
	Since $\{c_0, u, \bar{v_1}\}$ and $\{c(w_1w_2), w_1, \bar{w_2}\}$ are covers, \cref{cl:no_disjoint_covers} shows that $\{w_1, w_2\} \cap \{u, \bar{u}, v_1, \bar{v_1}\} \ne \emptyset$.
	The minimality of $P$ implies that $\{w_1, w_2\} \cap \{u, \bar{u},\bar{v_1}\}= \emptyset$, so $v_1\in \{w_1,w_2\}$, implying that $w_1=v_1$. 
	Similarly, since $\{c_0, v_2, \bar{u}\}$ and $\{c(w_1w_2), w_1, \bar{w_2}\}$ are covers, \cref{cl:no_disjoint_covers} shows that $\{w_1, w_2\} \cap \{u, \bar{u}, v_2, \bar{v_2}\} \ne \emptyset$.
	The minimality of $P$ implies that $\{w_1, w_2\} \cap \{u, \bar{u},\bar{v_2}\}= \emptyset$, so $v_2\in \{w_1,w_2\}$, implying that $w_2=v_2$. 
	We showed that $w_1w_2 = v_1v_2$, that is, $P=\{uv_1, v_1v_2, v_2\bar{u}\}$.
	Then \cref{cl:path} shows that $B''\coloneqq \{c_0, c(v_1v_2), u\}$ is a cover.
	Since $\tau \le w(B'') \le 2 + \tau/2$, we get that $w(B'') = \tau=4$.
	If $u \in [\bar{d}]$, then $B_1, \dots, B_l, B''$ satisfy \eqref{eq:new_ind} with $j_1 = 1, \dots, j_l = l, j_{l+1} = u$, a contradiction.
	Otherwise, if $u \in [d]$, then $e_u \in \bar{S}$ by \cref{cl:middle_cover}, thus we get the same contradiction after twisting the $u$th coordinate.
	We showed that having $c(w_1w_2) \ne c_0$ for some $w_1w_2\in P \setminus \{uv_1, v_2\bar{u}\}$ leads to a contradiction, hence $c(w_1w_2) = c_0$ for each $w_1w_2 \in P$.
	Then \cref{cl:path} shows that $\{c_0, u\}$ is a cover, yielding a contradiction as its weight is $1+\tau/2 < \tau$.
	
	We showed by induction that \eqref{eq:new_ind} holds for each $l \in [d]$.
	For $l=d$, \eqref{eq:new_ind} shows that $B_i = \{\bar{j_i}\}$ for $i \in [d]$.
	This is clearly a contradiction.
	\end{proof}

\begin{proof}[Proof of \Cref{non-MFMC-cuboid}]
Let $S$ be a strictly polar set-system of degree $\delta$ with a non-MFMC cuboid. Let $S'\subseteq \{0,1\}^d$ be a restriction of $S$ with a non-MFMC cuboid that is of the minimum dimension $d$. Clearly, $S'$ is a polar set-system that has degree at most $\delta$. Choose weights $w\in \bZ^{[d]\cup [\bar{d}]}_{\geq 0}$ satisfying \ref{it:h1} and \ref{it:h2}. Our minimal choice of $S'$ implies that \ref{it:h3} is also satisfied. It therefore follows from \Cref{thm:degree-dimension} that $S'$ has degree $d$, that is, $d\leq \delta$, thus finishing the proof.
\end{proof}

\section{Computational results}\label{section:computational}

In this section we present the computer-assisted part of the proof of
\cref{cor:deg_nine}.  The result shows that cuboids of cube-ideal strictly polar set-systems have the MFMC property whenever the set-system either has degree at most $9$, or is up-monotone and has dimension at most $10$.

Our computation reduces the task to verifying the unsatisfiability of a finite number of SAT instances indexed by `admissible' and `tightly admissible' weight functions. After describing such weight functions, we describe the SAT encoding, including its simplification in the up-monotone case, and then report results and running times.

\subsection{Admissible and tightly admissible weight functions}

We first define a property of weight functions that must be satisfied for a minimal counterexample to \Cref{cor:deg_nine}. 
Given $d, \tau \in \bZ_{\ge 0}$ and a weight function $w\in \bZ_{>0}^{[d]\cup [\bar{d}]}$, let $E_i$ denote the set of elements of weight $i$ for each positive integer $i$.
Let us call $w$ \emph{admissible} if $w_i+w_{\bar{i}} = \tau$ for $i \in [d]$ and there exists an integer $s \in \{\tau+1, \dots, d+1\}$ for which the inequality
\begin{align} \label{eq:technical}
	\frac{s}{s-\tau}-\frac{\tau-2}{k}+1 \le \sum_{i=1}^k \frac{(k+1-i)(s-\tau + i - 1)}{k s}|E_i|
\end{align}
holds for all $k \in \{1,\dots, \tau-1\}$. 
Furthermore, let us call $w$ \emph{tightly admissible} if $s=d+1$ is the only $s \in \{\tau+1, \dots, d+1\}$ for which \eqref{eq:technical} holds for all $k \in \{1, \dots, \tau-1\}$. We see that this property must be satisfied for an extremal minimal counterexample to \Cref{cor:deg_nine}. We computationally verified the following result using a SAT solver; see the next subsections for details.

\begin{prop} \label{prop:sat-verified}
	There exist no positive integers $d$ and $\tau$, set-system $S\subseteq \{0,1\}^d$, and weights $w\in \bZ^{[d]\cup [\bar{d}]}_{>0}$ such that each of the following holds:
	\begin{enumerate}[label=(c\arabic*)]
		\item $3 \le \tau \le d$, and either $d \le 9$, or $d=10$ and $S$ is up-monotone, \label{it:numerical-bounds}
		\item $w$ is admissible and $w_i \le w_{\bar{i}}$ for each $i \in [d]$, \label{it:weight-conditions}
		\item $S$ is cube-ideal, strictly polar, and $\0 \in \bar{S}$, \label{it:S-conditions}
		\item for $\cC\coloneqq \cuboid(S)$, we have $\tau(\cC, w) = \tau$, and if $w$ is tightly admissible, then the only covers of weight $\tau$ are $\{i, \bar{i}\}$ for $i \in [d]$. Moreover, for each $C\in \cC$, there exists a cover $B$ of $\cuboid(S)$ such that $w(B) \le |B\cap C|+\tau-2$. \label{it:cuboid-conditions}
	\end{enumerate}
\end{prop}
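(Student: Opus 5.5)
This statement is a computational one, so the proof is a finite exhaustive verification reduced to SAT, not a derivation. We first enumerate all candidate pairs $(d,\tau)$ and all weight functions $w$ allowed by \ref{it:numerical-bounds} and \ref{it:weight-conditions}. Lopsidedness together with $w_i+w_{\bar{i}}=\tau$ means $w$ is determined up to permutation of coordinates by the multiset $\{w_1,\ldots,w_d\}\subseteq\{1,\ldots,\lfloor\tau/2\rfloor\}$. Hence we may assume $w_1\le\cdots\le w_d$ and enumerate these sorted vectors. For each one we check inequality \eqref{eq:technical} for all $s\in\{\tau+1,\ldots,d+1\}$ and $k\in\{1,\ldots,\tau-1\}$, keeping the admissible ones and recording which are tightly admissible. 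This is a cheap arithmetic filter, and I expect it to leave relatively few weight vectors, since \eqref{eq:technical} forces many small-weight elements.

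For each surviving $(d,\tau,w)$ we build a CNF formula with one Boolean variable $x_p$ for each $p\in\{0,1\}^d$, meaning $p\in S$. The conditions are encoded as follows.
\begin{enumerate}[label=(\roman*)]
\item $\0\in\bar{S}$ is a unit clause.
\item Up-monotonicity, when required for $d=10$, is encoded as $x_p\to x_{p+e_i}$.
\item $\tau(\cC,w)\ge\tau$ says that every GSC inequality of $w$-weight less than $\tau$ is valid for $S$. Each such GSC is a subcube, and its validity means all points of that subcube are infeasible, giving unit or negative clauses. Some cover of weight exactly $\tau$ exists automatically, namely $\{i,\bar{i}\}$.
\item In the tightly admissible case, every non-trivial subcube of $w$-weight $\tau$ must also be forbidden to be empty, so it must contain a point of $S$. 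This gives one positive clause per such subcube.
\item The mate condition says that for each $p$, $x_p$ implies the disjunction over the candidate mates $B$ (those with $w(B)\le|B\cap C_p|+\tau-2$) of the auxiliary variable ``$B$ is a cover''. Each auxiliary variable is defined by the clauses forcing its subcube to be infeasible.
\item Strict polarity is encoded per restriction (a subcube face $F$): either $S\cap F$ lies in a facet of $F$, or $S\cap F$ contains an antipodal pair. Both alternatives are introduced via auxiliary variables, which are exponentially many but manageable for $d\le10$.
\item Cube-idealness is the hardest property to encode directly. I would use the local characterization, that every localization is ideal, together with the known finite list of minimally non-ideal structures in small dimension, forbidding minors of the form $\Delta_n$ or Lehman clutters. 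Alternatively, since the weights satisfy the conditions of \Cref{prop:acl}, I would first try dropping cube-idealness entirely or replacing it by a weaker necessary condition, such as forbidding local $\Delta_3$ / odd-hole configurations. If the weakened instance is already unsatisfiable, the statement follows \emph{a fortiori}.
\end{enumerate}

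We run CaDiCaL on each formula, obtain UNSAT with DRAT certificates, and check them with \texttt{drat-trim}. To keep the instance sizes manageable we add symmetry breaking: permutations of coordinates preserving $w$ and lopsidedness, for example a lexicographic ordering of $S$ restricted to blocks of equal weight.

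I expect the main obstacle to be the encoding size of strict polarity and cube-idealness for $d=9,10$, where the number of faces is $3^d$. The remedy I would try first is lazy clause generation: solve without those constraints, check any model returned for polarity or idealness violations, add the violated clauses, and repeat. The final run, with all accumulated clauses, is the certified UNSAT instance.
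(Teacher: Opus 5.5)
Your overall architecture matches the paper's: you enumerate sorted, lopsided, admissible $w$; you use one variable per point plus subcube auxiliaries; covers are empty subcubes; you add the mate clauses, strict polarity per restriction, and lex-leader symmetry breaking; and you certify with CaDiCaL, DRAT proofs and \texttt{drat-trim}. There is one slip and one genuine gap.

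The slip is that item (iii) is inverted. $\tau(\cC,w)\ge\tau$ means that \emph{no} GSC inequality of weight at most $\tau-1$ is valid. So every subcube $R_{I_0,I_1}$ of weight at most $\tau-1$ must meet $S$, which gives positive clauses. As you wrote it, $R_{\{i\},\emptyset}$ and $R_{\emptyset,\{i\}}$ (weights $w_i,w_{\bar{i}}\le\tau-1$) would be forced empty, so $S=\emptyset$. Your item (iv) uses the correct direction, so this is presumably unintended.

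The genuine gap is cube-idealness, the one constraint that needs an idea rather than bookkeeping. Your concrete shortcuts fail on the following example. Take $d=8$, $\tau=3$, $w_i=1$, $w_{\bar{i}}=2$; this is admissible (e.g.\ with $s=5$), hence not tightly admissible. Let $S\subseteq\{0,1\}^8$ be up-monotone with $\loc(S;\0)=C^3_8$.
\begin{itemize}
\item $\0\notin S$.
\item $S$ is strictly polar, because $C^3_8$ has no intersecting minor.
\item $\tau(\cC,w)=3$, because $C^3_8$ has no cover of size $2$.
\item Every $p\in S$ has a mate of the form $\supp(p)\cup\{e\}$.
\item Every localization of every minor of $S$ is a minor of $C^3_8$ plus free elements, so no odd hole occurs.
\end{itemize}
So dropping cube-idealness, or forbidding only $\Delta_3$/odd-hole configurations, leaves satisfiable formulas.

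Your remaining option, forbidding a ``known finite list'' of mni clutters, is not specified. It would require every mni clutter on at most $10$ elements, including its non-minimum members, together with all its embeddings as a localization of a minor.

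The missing idea is \Cref{lem:cube-ideal-SAT}.
\begin{enumerate}
\item Strict polarity rules out deltas and every mni clutter with an intersecting minor.
\item Lehman's theorem then forces $r=\lceil m/s\rceil$, $s=\lceil m/r\rceil\ge3$ and $r,s\nmid m$.
\item A short case analysis, with one case settled by the L\"{u}tolf--Margot catalogue, leaves only $(5,2,3),(7,2,4),(8,3,3),(9,2,5)$ for $m\le10$.
\item By the width--length inequality, cube-idealness is \emph{equivalent} to the following: for each such triple, every localization of every $m$-element minor has a member of size at most $r-1$ or a cover of size at most $s-1$.
\end{enumerate}
This is a single clause over the subcube variables per (minor, localization) pair. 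It encodes cube-idealness exactly without any catalogue of mni clutters, and it excludes examples like the one above.
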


We show that this result implies \cref{cor:deg_nine}.

\begin{proof}[Proof of \Cref{cor:deg_nine}.]
	Assume to the contrary that the statement fails and let $S\subseteq \{0,1\}^d$ be a counterexample with minimum $d$. 
	Let $\cC \coloneqq \cuboid(S)$ and choose $w \in \bZ^{[d]\cup [\bar{d}]}_{\ge 0}$ such that $\nu(\cC, w) < \tau(\cC, w)=:\tau$ and $\nu(\cC, w') = \tau(\cC, w')$ for any $w'\in \bZ^{[d]\cup [\bar{d}]}$ with $w'\lneq w$.
	Then, \ref{it:h1}, \ref{it:h2}, and \ref{it:h3} are satisfied, hence \cref{prop:acl} implies that $w_i + w_{\bar{i}} = \tau$ and $1 \le w_i, w_{\bar{i}} \le \tau-1$ for each $i \in [d]$, $\tau \ge 3$, and each $C \in \cC$ has a mate $B$.
	By \cref{lem:zero-infeasible}, we may assume that $w_i \le w_{\bar{i}}$ for $i \in [d]$ and $\0 \in \bar{S}$.
	\cref{thm:degree-dimension} implies that $S$ has degree $d$, in particular, either $d \le 9$, or $d=10$ and $S$ is up-monotone.	
	\cref{mfmc-testing-cuboid} shows that $\tau \le d$.
	Finally, using the notation $E_i \coloneqq \{e \in [d] \cup [\bar{d}]: w_e = i\}$ for $i \in \{1,\dots, \tau-1\}$, 
	\Cref{cl:average-mate-small-weight} and \Cref{cl:average-set-small-weight} in the proof of \cref{mfmc-testing-ideal} show that for some integer $s \in \{\tau+1, \dots, \rank(M(\cC))\}$, the inequality \eqref{eq:technical} holds for each $k \in \{1,\ldots,\tau-1\}$.
	Since $s \le \rank(M(\cC)) \le d+1$, we get that $w$ is admissible.
	If $w$ is tightly admissible, then $s=\rank(M(\cC)) = d+1$, hence the only minimum weight covers are $\{i, \bar{i}\}$ for $i \in [d]$.
	These together show that \ref{it:numerical-bounds}, \ref{it:weight-conditions}, \ref{it:S-conditions}, and \ref{it:cuboid-conditions} are all satisfied, contradicting \cref{prop:sat-verified}.
\end{proof}

To verify \cref{prop:sat-verified}, for each fixed $d, \tau \in \bZ_{>0}$ and $w\in  \bZ_{>0}^{[d]\cup [\bar{d}]}$ satisfying \ref{it:numerical-bounds} and \ref{it:weight-conditions}, we encode the existence of $S\subseteq \{0,1\}^d$ satisfying \ref{it:S-conditions} and \ref{it:cuboid-conditions} as a SAT formula.
Since the conditions in \cref{prop:sat-verified} are invariant under permuting the coordinate pairs $\{i,\bar{i}\}$, it suffices in the computation to consider admissible weight functions satisfying $w_1 \le \cdots \le w_d$.
Before giving the encoding, we present some technical results on idealness that we will use.

\subsection{Preparations for the SAT encoding}

As a preparation for our computational result, we prove the following technical lemma. 
Its statement \ref{it:cube-ideal} gives a characterization of cube-idealness of strictly polar sets relying on Lehman's characterization of mni clutters and certain instances of the `width-length inequality'~\cite{lehman1979width}.
The latter describes ideal clutters $\cC$ on ground set $E$ as the ones satisfying \[\min \{w(C): C \in \cC\} \cdot \min \{\ell(B):  B\in b(\cC)\} \le w^\top \ell\ \text{ for all } w, \ell \in \bR_{\ge 0}^{E}.\]

\begin{lemma} \label{lem:cube-ideal-SAT}
	Let $P$ denote the set of triples of integers $(m, r, s)$ for which there exists an mni clutter $\cC'$ on $m$ elements with no intersecting minor satisfying $\min \{|C|: C \in \cC'\}=r$ and $\min\{|B|: B \in b(\cC')\} = s$. Then the following statements hold:
	\begin{enumerate}[label=(\alph*)]
		\item \label{it:P-general} $P\subseteq \left\{(m, r, s)\in \bZ^3_{>0}: 2 \le r = \left \lceil \frac{m}{s}\right \rceil, 3 \le s = \left \lceil \frac{m}{r}\right \rceil, r \nmid m, s \nmid m\right\}$.
		\item \label{it:P-small} $P \cap \left\{(m, r, s)\in \bZ_{> 0}^3: m \le 12\right\}$ equals $$\{(5,2,3), \ (7,2,4), \ (8,3,3), \ (9,2,5), \ (11,2,6), \ (11,3,4), \ (11,4,3)\}.$$
		\item \label{it:cube-ideal} Let $S\subseteq \{0,1\}^d$ be a strictly polar set-system. Then $S$ is cube-ideal if and only if the following holds: for any $(m, r, s) \in P$ with $m \le d$, any minor $S'$ of $S$ on $m$ elements, and any localization $\cC'$ of $S'$, $\cC'$ contains a member of size at most $r-1$ or a cover of size at most $s-1$.
	\end{enumerate}
\end{lemma}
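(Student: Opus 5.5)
Part~\ref{it:P-general} will come from Lehman's \Cref{lehman}. Let $\cC'$ be an mni clutter on $m$ elements with no intersecting minor. First, $\cC'$ is not a delta $\Delta_m$, because every delta is intersecting. So $\cC'$ has the nondegenerate structure: exactly $m$ minimum sets of size $r\ge 2$, exactly $m$ minimum covers of size $s\ge2$, and $rs=m+\lambda$ with $\lambda\ge1$. In that structure $\tau_\star(\cC',\1)=m/r$, since $\1/r$ is a fractional cover and the $m$ minimum sets with weights $1/r$ each give a matching fractional packing. Lehman also gives $\tau(\cC',\1)=\lceil \tau_\star\rceil>\tau_\star$, so $s=\lceil m/r\rceil$ and $r\nmid m$. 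The blocker $b(\cC')$ is also mni, so symmetrically $r=\lceil m/s\rceil$ and $s\nmid m$. It remains to exclude $s=2$. If $s=2$, then $\nu(\cC',\1)<\tau_\star<\tau=2$ forces $\nu=1$ and $\tau= 2$, so $\cC'$ itself is intersecting, contradicting the hypothesis.

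Part~\ref{it:P-small} is mostly a finite enumeration. Part~\ref{it:P-general} leaves only a few candidate triples $(m,r,s)$ with $m\le 12$, and I will handle each one. For inclusion, I will exhibit explicit mni clutters with no intersecting minor. Odd holes $C_5,C_7,C_9,C_{11}$ (graph edges) give $(m,2,(m+1)/2)$. Their blockers give the transposed triples where these are mni without intersecting minors; the blocker of an odd hole contains intersecting minors once $s\ge3$, so there I need care. I will use known circulants: $\mathcal C_8^3$ (complement of the Fano-type $J$ example), $\mathcal C_{11}^3$ and its blocker for $(11,3,4)$ and $(11,4,3)$. For exclusion, the remaining triples in the candidate list for $m\le12$ (for instance $(7,3,3)$, the Fano plane, which is intersecting, and $(10,3,4)$) need to be ruled out. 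I would cite the known classification of small mni clutters (Cornuéjols–Novick / computational lists), which I assume is the paper's route. This is the step I expect to be the main obstacle: without such a citation it needs a computer search.

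For part~\ref{it:cube-ideal} I will work with localizations. By \cite[Theorem~1.8]{abdi2020cuboids}, $S$ is cube-ideal iff every localization is ideal. Localizations of minors of $S$ are minors of localizations, so $S$ is cube-ideal iff no minor has a localization that is mni. Strict polarity means $\cuboid(S)$ has no intersecting minor, and I expect this to carry to the localizations and their minors. Hence any mni clutter arising is an mni minor of a localization with no intersecting minor, which by part~\ref{it:P-general} has a triple $(m,r,s)\in P$ with $m\le d$.

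To prove \emph{only if}, suppose $S$ is cube-ideal and take a localization $\cC'$ of a minor $S'$ on $m$ elements. Then $\cC'$ is ideal, so the width–length inequality with $w=\ell=\1$ gives $\min|C|\cdot\min|B|\le m<rs$. Hence $\min|C|\le r-1$ or $\min|B|\le s-1$.

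To prove \emph{if}, suppose $S$ is not cube-ideal. Choose a localization with an mni minor $\cC''$ on $m$ elements. I realize $\cC''$ as a localization of a minor $S'$ on $m$ coordinates: deletion and contraction in a localization correspond to restriction and projection of $S$. By Lehman, $\cC''$ has minimum set size $r$ and minimum cover size $s$, which violates the condition.
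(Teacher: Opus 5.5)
Your arguments for (a) and (c) are sound. In (a) you go through $\tau_\star(\cC',\1)=m/r$ and then apply the same reasoning to the mni blocker. This uses that each element lies in exactly $r$ minimum members, which is true but is not part of \Cref{lehman} as stated. The paper gets both ceiling identities at once from $2\le |B_i\cap C_i|=rs-m+1\le\min\{r,s\}$, which is shorter and needs only the stated form. Your exclusion of $s=2$ via $\nu<\tau_\star<\tau=2$ is as good as the paper's. In (c), the step you only ``expect'' is immediate: $\loc(S;x)$ is a contraction minor of $\cuboid(S)$. So every minor of a localization of a minor of $S$ is a minor of $\cuboid(S)$ and has no intersecting minor.

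The real gap is the reverse containment in (b). Part (a) leaves exactly three extra candidates with $m\le 12$: $(7,3,3)$, $(10,3,4)$ and $(10,4,3)$. You never list them (you omit $(10,4,3)$), and you defer their exclusion to a classification. Cornu\'ejols--Novick cannot do this job, since it concerns circulants while $P$ quantifies over all mni clutters. Also, noting that the Fano plane is intersecting rules out $(7,3,3)$ only once you show it is the unique mni clutter with those parameters.

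The missing idea, which the paper uses for $(7,3,3)$ and $(10,4,3)$, is as follows. For these triples $rs-m+1=s$, so $|B_i\cap C_i|=|B_i|$, i.e.\ $B_i\subseteq C_i$, and hence no member of $\cC'$ is disjoint from any minimum member $C_i$. Delete any $s-2$ elements. The resulting minor has covering number at least $2$, so, having no intersecting minor, it contains two disjoint members. Neither can be a minimum member, so both have size at least $r+1$, giving $2(r+1)\le m-s+2$, which fails for both triples. Only $(10,3,4)$ genuinely needs the L\"utolf--Margot catalog: there the minimum members form the triangles of $K_5$, and restricting to a $K_4$ gives an intersecting minor.

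On the inclusion side, $b(C^3_{11})$ is a valid substitute for the paper's $C^4_{11}$ for $(11,4,3)$. You still owe the check that it has no intersecting minor, e.g.\ via $b(\cC)\setminus I/J=b(\cC/I\setminus J)$ and the fact that every minor of $C^3_{11}$ with all members of size at least $2$ admits a $2$-colouring making each member bichromatic. Your remark about blockers of odd holes is irrelevant: those have $s=2$ and are themselves intersecting.
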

\begin{proof}
	To show \ref{it:P-general}, let $(m, r, s) \in P$ and let $\cC'$ be a corresponding clutter.
	Since $\cC'$ has no intersecting minor, it is not a delta, so by Lehman's \cref{lehman}, $r, s \ge 2$ and there exist a labeling $C_1, \dots, C_m$ of the minimum-size members of $\cC'$ and a labeling $B_1, \dots, B_m$ of the minimum-size covers of $\cC'$ such that $|B_i \cap C_i| = rs-m+1\ge 2$ for each $i \in [m]$.
	Then, $2 \le rs-m+1 = |B_i\cap C_i| \le \min \{r, s\}$ implies $0 < rs-m < \min \{r, s\}$, hence $r = \left\lceil \frac{m}{s} \right\rceil$, $s = \left\lceil \frac{m}{r} \right\rceil$ and $m$ is not divisible by $r$ or $s$.
	We also have $s\ge 3$, since $s=2$ would imply that $r=\left\lceil \frac{m}{2} \right\rceil > \frac{m}{2}$, hence $\cC'$ would be intersecting.
	
	To show \ref{it:P-small}, we note first that for each triple listed in \ref{it:P-small}, there exists at least one mni clutter with these parameters that has no intersecting minor: $C^2_5$, $C^2_7$, $C^3_8$, $C^2_9$, $C^2_{11}$, $C^3_{11}$, and $C^4_{11}$, respectively, where $C^r_m$ is the clutter on $[m]$ whose members are the $m$ sets of $r$ cyclically consecutive elements~\cite{lutolf1998catalog}.
	To show the reverse containment, let $(m,r,s) \in P$ with $m \le 12$.
	Then, \ref{it:P-general} implies that $(m, r, s)$ is either a triple listed in \ref{it:P-small} or one of $(7,3,3)$, $(10,3,4)$, or $(10,4,3)$.
	Assume first that $(m, r, s) \in \{(7,3,3), (10,4,3)\}$.
	Then, $|B_i \cap C_i| = rs-m+1 = s=|B_i|$, hence $B_i \subseteq C_i$, thus no member of $\cC'$ is disjoint from $C_i$ for $i \in [m]$.
	For a clutter $\cC''$ obtained from $\cC'$ by deleting any $s-2$ elements, we have $\tau(\cC'') \ge \tau(\cC')-(s-2) = 2$, hence $\cC''$ contains two disjoint members as $\cC'$ has no intersecting minor.
	Since each member of $\cC'$ intersects each $C_i$, we have that each of the two disjoint members of $\cC'$ has size at least $r+1$, thus $2\cdot (r+1) \le m-s+2$, a contradiction for these parameters.
	Finally, if $(m,r,s) = (10,3,4)$, then the catalog of L\"{u}tolf and Margot~\cite{lutolf1998catalog} shows that the minimum cardinality members of $\cC'$ form a clutter isomorphic to $\mathcal{T}_{K_5}$: the clutter on the edge set of the complete graph $K_5$ consisting of the triangles. 
	Restricting this clutter to the edges of a $K_4$, we have an intersecting minor, a contradiction.
	
	Finally, to show \ref{it:cube-ideal}, assume first that $S$ is cube-ideal.
	Then, each minor $S'$ of $S$ is cube-ideal as well, hence each localization $\cC'$ of $S'$ is ideal.
	By the width-length inequality, $\min\{|C|: C \in \cC'\} \cdot \min \{|B|: B \in b(\cC')\} \le m$, where $m$ is the size of the ground set of $\cC'$.
	In particular, if $(m, r, s) \in P$, then $rs > m$, hence $\min\{|C|: C \in \cC'\} \le r-1$ or $\min\{|B|: B \in b(\cC')\} \le s-1$.
	To see the converse, assume that $S$ is not cube-ideal.
	Then, some localization of $S$ is non-ideal, and let $\cC'$ be an mni minor of that localization.
	Observe that $\cC'$ can also be obtained as the localization of a minor $S'$ of $S$.
	Let $m$ denote the size of the ground set of $\cC'$, $r\coloneqq \min \{|C|: C \in \cC'\}$ and $s\coloneqq \min \{|B|: B \in b(\cC')\}$.
	Then, $(m, r, s) \in P$ and the condition in \ref{it:cube-ideal} fails.
	This finishes the proof.  
\end{proof}

\subsection{The SAT encoding}

Let $d, \tau \in \bZ_{>0}$ and $w\in \bZ_{>0}^{[d]\cup [\bar{d}]}$ satisfy \ref{it:weight-conditions}, that is, $1 \le w_i \le w_{\bar{i}} \le \tau-1$ and $w_i +w_{\bar{i}} = \tau$ for $i \in [d]$.
For each $p \in \{0,1\}^d$, we introduce a binary variable $x_p$ indicating whether it belongs to the set-system.
In what follows, we describe a SAT formula in \emph{conjunctive normal form (CNF)}, given as a set of clauses involving $x_p,p\in \{0,1\}^d$ and other auxiliary variables such that for a set-system $S\subseteq \{0,1\}^d$, there is a satisfying truth assignment with $S= \{p \in \{0,1\}^d: x_p  \text{ is }\texttt{True}\}$ if and only if $S$ satisfies \ref{it:S-conditions} and \ref{it:cuboid-conditions}.

\paragraph{Strict polarity.}
By definition, $S$ is strictly polar if each of its restrictions is contained in a hyperplane of the form $\{x:x_i=a\}$, or contains two antipodal points.
To encode the former, we use auxiliary variables that will also be used later.
For each disjoint $I_0, I_1 \subseteq [d]$, we use the notation \[R_{I_0, I_1} \coloneqq \{p \in \{0,1\}^d: p_i = 0 \ (i \in I_0), p_i = 1\  (i \in I_1)\},\]
and we introduce a binary variable $y_{I_0, I_1}$ and add clauses such that \[y_{I_0, I_1} \text{being \texttt{True} corresponds to } S\cap R_{I_0,I_1} \ne \emptyset.\]
If $I_0 \cup I_1 = [d]$, we identify $y_{I_0, I_1}$ with $x_p$ where $p\in \{0,1\}^d$ is the unique point in $R_{I_0, I_1}$.
Otherwise, if $I_0 \cup I_1 \ne [d]$, then we fix a $j \in [d]\setminus (I_0 \cup I_1)$ and add three clauses enforcing \[y_{I_0, I_1} = y_{I_0\cup \{j\}, I_1} \vee y_{I_0, I_1\cup \{j\}}.\]
For disjoint $I_0, I_1 \subseteq [d]$, we formulate the polarity of the restriction $S\cap R_{I_0, I_1}$ as \[\left(\bigvee_{j \in [d]\setminus (I_0 \cup I_1)} (\bar{y_{I_0\cup \{j\}, I_1}} \vee \bar{y_{I_0, I_1\cup \{j\}}})\right) \vee \left(\bigvee_{p, q \text{ antipodal in $R_{I_0, I_1}$}} (x_p \wedge x_q)\right)\]
expanded into clauses using auxiliary variables. 
Adding this for each pair of disjoint $I_0, I_1 \subseteq [d]$ with $|I_0|+|I_1|\le d-3$, we encode exactly the strict polarity of $S$, as $(\leq 2)$-dimensional restrictions are automatically polar.

\paragraph{Cube-idealness.}
We enforce cube-idealness using \cref{lem:cube-ideal-SAT}, part~\ref{it:cube-ideal}.
For each $(m, r, s) \in P$ with $m \le d$, we add the constraint that each localization of each minor of $S$ on $m$ elements contains a member of size at most $r-1$ or a cover of size at most $s-1$.
Note that by \ref{it:P-small}, we only need to use 4 different triples of parameters for $d \le 10$.
For a given $(m, r, s)$, we iterate over all pairwise disjoint $I_0, I_1, J, K \subseteq [d]$ with $I_0 \cup I_1 \cup J \cup K = [d]$ and $|K| = m$, and over all $p \in \{0,1\}^K$.
For each such choice, let $S'$ be the minor of $S$ obtained by restricting the coordinates in $I_0$ to $0$, the coordinates in $I_1$ to $1$, and projecting away the coordinates in $J$. 
We add the constraint that the localization $\loc (S'; p)$ contains a member of size at most $r-1$ or a cover of size at most $s-1$.
We let $K_0 \coloneqq \{i \in K: p_i = 0\}$, $K_1 \coloneqq \{i \in K: p_i = 1\}$.
Observe that for $D\subseteq K$, the condition $S\cap R_{I_0 \cup (K_0 \cap D), I_1 \cup (K_1 \cap D)} \ne\emptyset$ is equivalent to $\loc(S'; p)$ having a member contained in $K\setminus D$.
Similarly, for $B\subseteq K$, the condition $S \cap R_{I_0 \cup (K_0 \cap B), I_1 \cup (K_1 \cap B)} = \emptyset$ is equivalent to $B$ being a cover of $\loc(S'; p)$.
Therefore, the clause
\[\left(\bigvee_{D \subseteq K:\ |D|=m-r+1} y_{I_0 \cup (K_0 \cap D),\ I_1 \cup (K_1 \cap D)}\right) \vee \left(\bigvee_{B \subseteq K:\ |B|= s-1} \bar{y_{I_0 \cup (K_0 \cap B), I_1 \cup (K_1 \cap B)}}\right)\]
adds the condition that $\loc(S'; p)$ contains a member of size at most $r-1$ or a cover of size at most $s-1$.

\paragraph{Constraints on covers.}
For the constraints involving covers of $\cC\coloneqq \cuboid(S)$, observe that a set $B\subseteq [d] \cup [\bar{d}]$ with $|B\cap \{i,\bar{i}\}| \le 1$ for all $i \in [d]$ is a cover of $\cC$ if and only if $S\cap R_{I_0, I_1} = \emptyset$ for $I_0 = B\cap [d]$ and $I_1 = \{i \in [d]: \bar{i}\in B\}$.
We forbid covers of weight at most $\tau-1$ by iterating over each pair of disjoint $I_0, I_1 \subseteq [d]$ with $\sum_{i\in I_0} w_i + \sum_{i \in I_1} w_{\bar{i}} \le \tau-1$ and setting $y_{I_0, I_1}$ to \texttt{True}.
This guarantees $\tau(\cC, w) = \tau$, since $\tau(\cC, w) \le \tau$ holds due to the weight-$\tau$ covers $\{i, \bar{i}\}$ for $i \in [d]$.
If $w$ is tightly admissible, as guaranteed by \ref{it:cuboid-conditions}, we similarly forbid all covers $B$ satisfying $w(B) = \tau$ and $|B\cap \{i,\bar{i}\}| \le 1$ for each $i \in [d]$.
For the condition on mates appearing in \ref{it:cuboid-conditions}, note that it is equivalent to requiring for each $C\in \cC$ the existence of a (not necessarily minimal) cover $B$ with $w(B) \le |B\cap C| + \tau-2$ and $|B\cap \{i,\bar{i}\}|\le 1$ for $i \in [d]$.
To add this, we iterate over $p \in \{0,1\}^d$, and add the constraint 
\begin{align} \label{eq:mates}
	\bar{x_p} \vee \bigvee_{\substack{I_0, I_1 \subseteq [d], \ I_0 \cap I_1 = \emptyset, \\ \sum_{i \in I_0} w_i + \sum_{i \in I_1} w_{\bar{i}} \le |\{i \in I_0: p_i = 1\}| + |\{i \in I_1: p_i = 0\}| + \tau-2}} \bar{y_{I_0, I_1}}.
\end{align}
This finishes the description of all conditions of \ref{it:S-conditions} and \ref{it:cuboid-conditions}, except for $\0 \in \bar{S}$ which we add by setting $x_{\0}$ to \texttt{False}.

\paragraph{Symmetry breaking.}
Finally, we add some clauses that preserve the satisfiability of the formula while reducing the search space.
Let $\Gamma$ be the subgroup of permutations $\sigma$ of $[d]$ with $w_{\sigma(i)} = w_i$ for each $i \in [d]$.
For a permutation $\sigma \in \Gamma$ and a point $p \in \{0,1\}^d$, let $\sigma(p) \coloneqq (p_{\sigma^{-1}(1)}, \dots, p_{\sigma^{-1}(d)})$.
Observe that for each $\sigma \in \Gamma$ and $S\subseteq \{0,1\}^d$, the set-system $\{\sigma(p): p \in S\}$ satisfies \ref{it:S-conditions} and \ref{it:cuboid-conditions} whenever $S$ does.
Fix an enumeration $p_1, \dots, p_{2^d}$ of $\{0,1\}^d$.
Adding the constraint
\begin{align} \label{eq:lex}
(x_{p_1}, \dots, x_{p_{2^d}}) \ge_{\mathrm{lex}} (x_{\sigma(p_1)}, \dots, x_{\sigma(p_{2^d})})
\end{align}
for each $\sigma \in \Gamma$ would not change the satisfiability of the formula since any lexicographically maximal satisfying assignment of the original formula also satisfies these constraints. Here, $x_p$ is a binary variable for each point $p\in \{0,1\}^d$, where $x_p=1$ if $x_p$ is \texttt{True}, and $x_p=0$ if $x_p$ is \texttt{False}. As adding the constraint \eqref{eq:lex} for each $\sigma \in \Gamma$ would be impractical if $|\Gamma|$ is large, we add it only for the transpositions in $\Gamma$.
We encode \eqref{eq:lex} with clauses in the standard way using auxiliary equal-prefix variables~\cite{crawford1996symmetry}.

\paragraph{Up-monotone set-systems.}
The SAT encoding described thus far is the generic one we used for the computations with $d\leq 9$. For the up-monotone case in dimension $10$, we used additional clauses and simplifications to speed up the computation. 

The up-monotonicity of $S$ can be simply enforced by adding $\bar{x_p} \vee x_q$ for each $p, q \in \{0,1\}^d$ with $p \le q$ where $p$ and $q$ differ by exactly one coordinate.
When restricting the search space to up-monotone set-systems, we may omit some of the clauses for cube-idealness and covers.
First, for the cube-idealness clauses it suffices to consider only the cases with $I_1=\emptyset$ and $p=\0$, since all other localizations reduce to these after deleting free elements.
Namely, if $I_0,I_1,J,K$ is a partition of $[d]$, $p\in \{0,1\}^K$, $P = \{i\in K : p_i=1\}$, and $S'\subseteq \{0,1\}^K$ is obtained from $S$ by $0$-restricting $I_0$, $1$-restricting $I_1$, and projecting $J$, then $\loc(S';p)$ can be obtained from $\loc(S;\0)\setminus I_0 /(I_1\cup J\cup P)$ by adding the elements of $P$ as free elements (see also \cite[Remark~4.6]{abdi2020cuboids}).
Second, for adding the condition on mates, in \eqref{eq:mates} it suffices to consider $I_1 = \emptyset$, since for disjoint $I_0, I_1 \subseteq [d]$, $I_0 \cup \{\bar{i}: i \in I_1\}$ is a cover of $\cuboid(S)$ if and only if $I_0$ is.

\subsection{Results and running times}

Using the above encoding, we ran the SAT solver CaDiCaL~\cite{cadical}
on every instance with $d\leq 9$ and every up-monotone instance
with $d=10$. In each case CaDiCaL returned UNSAT and produced a DRAT
certificate of unsatisfiability. These certificates were independently
verified using \texttt{drat-trim}~\cite{drattrim}, and they form the
computer-assisted proof of \cref{prop:sat-verified}.

The implementation, generated CNF instances, DRAT proofs, and solver and verifier metadata are archived on Zenodo~\cite{mfmc_data}, while the implementation is also available on GitHub\footnote{\url{https://github.com/tamas-schwarcz/mfmc}}. 
The running times in \cref{table:times} are sums of the per-instance CaDiCaL wall-clock times recorded in the solver metadata. Thus they measure aggregate solver time, not elapsed batch time.
The computations were run with four parallel worker processes on a personal
computer equipped with an 11th Gen Intel Core i7-1165G7 processor at 2.80 GHz (4 physical cores, 8 threads) and 32 GB of RAM.

\begin{table}[htbp] \centering
\begin{tabular}{llrrrrrr}
	\toprule
	$d$ & set-system & $\max\tau$ & \#adm. & \#tight & $\max$ vars & $\max$ clauses & time (s)\\
	\midrule
	5 & general & 3 & 1 & 1 & 819 & 2\,518 & 0.01 \\
	6 & general &  4 & 4 & 3 & 3\,033 & 9\,859 & 0.18 \\
	7 & general &  5 & 10 & 5 & 11\,211 & 40\,543 & 10.04 \\
	8 & general &  6 & 18 & 3 & 41\,761 & 180\,116 & 932.49 \\
	9 & general &  6 & 58 & 18 & 157\,411 & 859\,312 & 319\,423.09 \\
	10 & up-mon. & 7 & 130 & 50 & 600\,745 & 1\,856\,940 & 453\,409.52 \\
	\bottomrule
\end{tabular} 
	\caption{Summary of the SAT computations by dimension.
		The rows for $d=5,\ldots,9$ concern the general setup, while $d=10$ is only for up-monotone set-systems.		
		For each dimension $d$, the table gives the largest value of $\tau$ for which an admissible weight function exists, the number of admissible weight functions
		$w \in \bZ_{>0}^{[d] \cup [\bar{d}]}$ with $w_1\leq\cdots\leq w_d$, the number of tightly
		admissible weight functions among them, the maximum numbers of variables and
		clauses among the corresponding CNF formulas, and the sum of the per-instance CaDiCaL wall-clock times recorded for that dimension.} \label{table:times}
\end{table}

As \cref{table:times} shows, in the dimensions considered here the admissibility condition rules out many values of $\tau$ allowed by the a priori bound in \cref{mfmc-testing-cuboid}.
The tightly admissible instances were typically much easier to solve. For
example, for $d=9$ they accounted for less than $0.1\%$ of the aggregate
per-instance CaDiCaL time.
The slowest instance among the ones reported in \cref{table:times} was the instance with $d=9$, $\tau=3$, and $(w_1, \dots, w_{9}) = (1,1,1,1,1,1,2,2,2)$ and it took roughly 9 hours to solve.

The restriction to up-monotone instances speeds up the solver significantly; e.g. for $d=9$, adding up-monotonicity sped up the computation by a $178$ factor (the solver time reduced from $319\,423.09\,\mathrm{s}$ down to $1790.62\,\mathrm{s}$). 
Therefore, with our current setup, we did not complete the runs for general instances with $d=10$.
Nevertheless, we expect that stronger symmetry-breaking clauses and additional computational resources would allow the general bound $d\leq 9$ and the up-monotone bound $d\leq 10$ to be pushed further.

\section{Future research directions}\label{section:conclusion}

In this paper, we verified the replication conjecture for all cuboids of degree at most $9$, and all clutters over at most $10$ elements. Our results were built on two pillars. One was to upper bound the weighted covering number for the search directions needed to test the MFMC property, and another to upper bound the dimension of minimally non-MFMC cuboids of bounded degree. We conclude with some thoughts on how to strengthen both pillars.\medskip

The first future direction is to improve the bound on $\tau(\cC,w)$ in \Cref{mfmc-testing-ideal}. The guiding light here is the $\tau=2$ conjecture, which can be reformulated as follows.

\begin{conj}\label{CN:testing-mfmc}
Let $\cC$ be an ideal clutter over $n$ elements. If $\cC$ is not MFMC, then there exists $w\in \bZ_{\geq 0}^n$ such that every element belongs to a minimum $w$-weight cover, $\tau(\cC,w)=2$ and $\nu(\cC,w)=1$.
\end{conj}

An already non-trivial improvement of \Cref{mfmc-testing-ideal} would be to prove an upper bound on $\tau(\cC,w)$ of $cn$ for a universal constant $c\in (0,1)$.\medskip 

Another tightly linked approach is to upper bound the covering number of an ideal mnp clutter. We have the following starting point. 

\begin{thm}\label{tau=2-relaxation}
Let $\cC$ be an ideal mnp clutter over $n$ elements. Then \begin{enumerate}[label=(\alph*)]
\item \label{part:relaxation1} $\tau(\cC,\1)\leq \frac{n}{3}$, and 
\item \label{part:relaxation2} $\tau(\cC,\1)\leq r\left(1-\frac{1+\sqrt{4n+1}}{2n}\right)$, where $r=\rank(M(\cC))$. 
\end{enumerate}
\end{thm}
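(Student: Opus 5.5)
Throughout, $\cC$ is an ideal mnp clutter over $E$ with $|E|=n$. The plan is to check that $\hat w=\1$ satisfies all the hypotheses used in the proof of \Cref{mfmc-testing-ideal}, and then to specialise those claims to unit weights. Since $\cC$ does not pack, $\tau(\cC,\1)>\nu(\cC,\1)$. For every $e$, the weight $\1-\1_{\{e\}}$ emulates the proper minor $\cC\setminus e$, which packs, so $\tau=\nu$ there. Hence $\1$ is entry-wise minimal, and \Cref{minimal-non-MFMC-weights} gives three facts: every element lies in a minimum cover, $\tau-\nu=1$, and every $C\in\cC$ has a mate $B$ with $|B|-|B\cap C|\le\tau-2$.

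Consequently \Cref{mfmc-testing-strong:mate} through \Cref{mfmc-testing-strong:rank-ub} hold verbatim with $\hat w=\1$. The crucial simplification is that every member of $\cB'$, the clutter of minimum $\1$-weight covers, has size exactly $\tau$, so $\max_{B\in\cB'}|B|=\tau$.

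For part~\ref{part:relaxation2}, I take an optimal $\hat y$ with $|\supp(\hat y)|\le\rank(M')\le r$, using \Cref{mfmc-testing-strong:CS-supp}. \Cref{mfmc-testing-strong:tau-supp-ub} then gives $\tau\le r-\tau$, that is, $\tau\le r/2$. It remains to compare $r/2$ with the stated bound. The inequality $r/2\le r\bigl(1-\frac{1+\sqrt{4n+1}}{2n}\bigr)$ is equivalent to $1+\sqrt{4n+1}\le n$, which holds for $n\ge 6$.

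For small $n$, where this comparison fails, I would instead use the $n$-dependent chain from the proof of part~\ref{mfmc-testing-ideal:tau-n}. That chain gives $\tau\le \rank(M')-\tau$ and $\rank(M')\le n+1-n/\tau$, which I would combine with the arithmetic mean--geometric mean step. I will need to check whether this combination yields exactly $\tau\le r-r\frac{1+\sqrt{4n+1}}{2n}$. The quantity $\frac{1+\sqrt{4n+1}}{2}$ is the positive root of $x^2-x-n=0$, which suggests the intended argument is: write $\rank(M')\le n+1-n/\tau$ and $\tau\le \rank(M')-\tau$, eliminate $n/\tau$ against $r$, and solve the resulting quadratic in $\tau/r$. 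I expect this bookkeeping to be routine. Alternatively, since ideal mnp clutters with $n\le 8$ are known to have $\tau=2$ \cite{abdi2020cuboids}, the small cases can be dispatched directly.

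For part~\ref{part:relaxation1}, the target is the sharper $\tau\le n/3$. The chain $2\tau+n/\tau\le n+1$ alone only gives roughly $\tau\le n/2$. I would therefore exploit the minor structure more strongly, starting with contractions. For $e\in E$, the minor $\cC/e$ packs and $\tau(\cC/e,\1)\ge\tau$, so there are $\tau$ pairwise disjoint sets of $\cC/e$. Lifting them gives $\tau$ sets of $\cC$, and since $\nu(\cC,\1)=\tau-1$ they pairwise meet only in $e$. Every minimum cover $B$ through $e$ then meets each of these lifted sets only in $e$. Hence the remaining $\tau-1$ elements of $B$ lie outside their union, which gives one lower bound on $n$.

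I would pair this with the mate inequality under unit weights. Here $|B\cap C|\ge |B|-\tau+2=2$, and from \Cref{mfmc-testing-strong:fp-up} every element lies in at least two sets of $\supp(\hat y)$. Together these should show that the $\tau$ lifted sets have size at least $3$ apart from $e$, or that their union together with $B$ accounts for about $3\tau$ elements. Establishing this last counting step, getting from $2\tau$ to $3\tau$, is the main obstacle.

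My first attempt would be as follows. Suppose some lifted set $C=\{e,f\}$ has size $2$. A mate $B$ of $C$ must contain both $e$ and $f$ with $|B|=\tau$. Then the sets of $\cC/f$ would force a second disjoint packing, which I would combine with the first to produce $\tau$ disjoint sets in $\cC$, contradicting non-packing. If that fails, I would instead double-count the incidences between $\supp(\hat y)$ and a minimum cover, using that each element has degree at least $2$ in $\supp(\hat y)$.
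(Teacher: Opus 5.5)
\textbf{Part (a) has a genuine gap.} Your contraction argument does not go through as stated. Of the $\tau$ lifted sets, only at least two are forced to contain $e$. A minimum cover through $e$ must therefore use its other $\tau-1$ elements to hit the lifted sets avoiding $e$, and nothing prevents it from meeting a lifted set through $e$ in further elements. The decisive step from $2\tau$ to $3\tau$ is also left open.

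The missing idea is hidden in your line $|B\cap C|\ge |B|-\tau+2=2$, where you tacitly take the mate to be a minimum cover. Take an optimal fractional packing $y$ of value $\tau$, which exists by idealness. For $C$ with $y_C>0$, complementary slackness gives $|C\cap B'|=1$ for \emph{every} minimum cover $B'$. The mate $B$ of $C$ has $|B\cap C|\ge 2$, so it cannot be a minimum cover. Hence $|B|\ge \tau+1$, and therefore $|B\cap C|\ge |B|-\tau+2\ge 3$, so $|C|\ge 3$. Feasibility of $y$ then gives $n\ge\sum_{e}\sum_{C\ni e}y_C=\sum_C|C|\,y_C\ge 3\tau$. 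This is the paper's proof; no further minor-structure argument is needed beyond the existence of mates.

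\textbf{Part (b) is correct} and takes a different, simpler route than the paper's. The paper reruns the $\Phi_1$ averaging of \Cref{cl:average-mate-small-weight} and \Cref{cl:average-set-small-weight} to get $\frac{(s-\tau)n}{s}\ge \Phi_1\ge \frac{s}{s-\tau}+1$, then solves a quadratic in $(s-\tau)/s$. Your observation that $|B|=\tau$ for all $B\in\cB'$ turns \Cref{mfmc-testing-strong:tau-supp-ub} into $2\tau\le|\supp(\hat y)|\le r$. This is at least as strong as the stated bound once $n\ge 6$.

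Your small-$n$ discussion is off, however. Knowing $\tau=2$ for $n\le 8$ does not control $r$: a hypothetical example with $n=5$, $r=4$, $\tau=2$ would violate the bound. The quadratic bookkeeping you sketch is also unnecessary. The case is simply vacuous: the chain you cite, $2\tau\le \rank(M')\le n+1-n/\tau$ with $\tau\ge 2$, forces $n\ge 6$, as does part (a).

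Finally, justify minimality of $\1$ against every $\1-\1_I$ with $I\neq\emptyset$, which corresponds to the proper minor $\cC\setminus I$. Single deletions are not enough, because part (c) of \Cref{minimal-non-MFMC-weights} uses $\1-\1_C$.
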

\begin{proof}
Let $E$ be the ground set of $\cC$ and let $\tau\coloneqq \tau(\cC,\1)$.
{\bf \ref{part:relaxation1}}  
By \Cref{minimal-non-MFMC-weights} applied for $\bar{w}=\1$, we obtain that every element belongs to a minimum cover, and every set $C$ in $\cC$ has a mate, i.e., a minimal cover $B$ such that $|B|\leq \tau-2+|B\cap C|$. As $\cC$ is ideal, there exists an optimal solution $y\in \bR^{\cC}_{\geq 0}$ to $\nu_\star(\cC,\1)$ of value $\tau$. 
The key idea is that if $y_C>0$, then $|C|\geq 3$. 
Such a $C$ intersects every minimum cover exactly once by complementary slackness. Thus, for $B$ a mate of $C$, given that $|B\cap C|\geq |B|-\tau+2\geq 2$, it follows that $B$ is non-minimum, so $|B|\geq \tau+1$, implying in turn that $|B\cap C|\geq 3$, so $|C|\geq 3$. Subsequently, $$
n\geq \sum_{e\in E}\sum_{C\in \supp(y),e\in C}y_C = \sum_{C\in \supp(y)} |C|y_C \geq 3\cdot \1^\top y = 3\tau,
$$ thus proving the claimed inequality.
{\bf \ref{part:relaxation2}} 
is obtained by adapting the proof of \Cref{mfmc-testing-ideal} for ideal mnp clutters by setting $\hat{w}=\1$. More specifically, one can improve \Cref{cl:average-mate-small-weight} and \Cref{cl:average-set-small-weight} to obtain that $\frac{(s-\tau)n}{s}\geq \Phi_1\geq \frac{s}{s-\tau}+1$, 
thus giving $d\geq s(1+\sqrt{4n+1})/2n$, 
eventually leading to the claimed upper bound on $\tau$.
\end{proof}

It would be quite interesting to provide a meaningful improvement of this theorem. In particular, any upper bound on $\tau(\cC,\1)$ of the form $n^{0.5-\varepsilon},\, \varepsilon\in (0,0.5)$ in \Cref{tau=2-relaxation} would immediately improve the upper bound on $\tau(\cC,w)$ in \Cref{mfmc-testing-ideal} to $n^{1-2\varepsilon}$ \`{a} la the proof of \Cref{tau=2->replicon}. \medskip

The second future direction is to improve 
\Cref{non-MFMC-cuboid}. 
To contextualize this, recall that idealness is a local property. This means that if $\cuboid(S)$ is non-ideal, then one of the localizations of $S$ is a non-ideal clutter. Thus, a non-ideal cuboid of degree $\delta$ has a non-ideal clutter minor over at most $\delta$ elements.

On the other hand, the MFMC property is not a local property. For instance, the cuboid of $\{000,110,101,011\}$ is not MFMC while all its localizations are. Given that strict polarity makes the packing property local, and given the replication conjecture, it is only natural to conjecture the following.

\begin{conj}\label{conj:MFMC-local}
Let $S$ be a strictly polar set-system. Then $\cuboid(S)$ is MFMC if, and only if, every localization of $S$ is MFMC.
\end{conj}

If true, this would imply that if the cuboid of a strictly polar set-system of degree $\delta$ is non-MFMC, then it has a non-MFMC clutter minor over at most $\delta$ elements, thus improving \Cref{non-MFMC-cuboid} by a multiplicative factor of $2$, as the latter only yields a non-MFMC minor over at most $2\delta$ elements.

\Cref{conj:MFMC-local} has the following attractive reformulation. A clutter is \emph{minimally non-MFMC} if it is non-MFMC but every proper minor is MFMC.
		
		\begin{conj}
			Let $\cC$ be an ideal minimally non-MFMC clutter. If there exist distinct elements $u,v$ such that $|C\cap \{u,v\}|=1$ for all $C\in \cC$, then $2=\tau(\cC,\1)>\nu(\cC,\1)$.
		\end{conj}

\section*{Acknowledgements}

This work was supported by EPSRC grant EP/X030989/1. 
This work was initiated when AA was a Visiting Professor hosted generously by the G-SCOP Laboratory, Université Grenoble Alpes in Fall 2024. 
We would like to thank Alex Black, G\'{e}rard Cornu\'{e}jols, Andr\'{a}s Seb\H{o}, and Olha Silina for early discussions about this work.

\paragraph{AI Statement.}
The authors used large language models --- specifically ChatGPT (OpenAI) and Claude (Anthropic) --- to prepare and refine supporting scripts for reproducibility and data-management, and to polish the instance generator. The authors thoroughly reviewed and tested all AI-generated material and take full responsibility for the final content.

\paragraph{Data Availability Statement.} The computational artifact supporting the computer-assisted results in this paper is available on Zenodo~\cite{mfmc_data} at  \url{https://doi.org/10.5281/zenodo.20590528}. The archive contains the Python implementation, generated CNF instances, DRAT proofs, solver and verifier metadata, manifests, checksums, and instructions for reproducing the computations. The implementation is also available on GitHub at \url{https://github.com/tamas-schwarcz/mfmc}.

\paragraph{Conflicts of interests/competing interests statement.} The authors have no conflicts of interest nor competing interests to declare that are relevant to the content of this article.
	
{\small 
\bibliographystyle{alpha}
\bibliography{main.bib}
	}
\end{document}